\definecolor{myblue}{HTML}{0000FF}
\definecolor{myred}{HTML}{FF0000}
\definecolor{myorange}{HTML}{FFA500}
\definecolor{myyellow}{HTML}{FFFF00}
\definecolor{mycyan}{HTML}{00FFFF}
\definecolor{mygreen}{HTML}{008000}
\definecolor{mybrown}{HTML}{A52A2A}
\renewcommand{\Pr}{\Prob}
\newcommand{\polylog}{\mathsf{polylog}}
\newcommand{\pop}{population recovery\xspace}
\newcommand{\Expect}{\mathbb{E}}
\newcommand{\ed}{\stackrel{\mathrm{def}}{=}}
\newcommand{\EE}{\mathbb{E}}
\newcommand{\cO}{O}
\newcommand{\cS}{\mathcal{S}}
\newcommand{\cP}{\mathcal{P}}
\newcommand{\eg}{e.g.\xspace}
\newcommand{\ignore}[1]{}%
\newcommand{\Th}{^\mathrm{th}}
\newcommand{\integers}{\mathbb{Z}}
\newcommand{\naturals}{\mathbb{N}}
\newcommand{\reals}{\mathbb{R}}
\newcommand{\Prob}{\mathbb{P}}
\newcommand{\Binom}{\mathrm{Bin}}
\newcommand{\pth}[1]{\left( #1 \right)}
\newcommand{\sth}[1]{\left\{ #1 \right\}}
\newcommand{\red}{\color{red}}
\newcommand{\blue}{\color{blue}}
\newcommand{\nb}[1]{{\sf\blue[#1]}}
\newcommand{\nbr}[1]{{\sf\red[#1]}}
\newcommand{\sfN}{{\sf N}}
\newcommand{\sfL}{{\sf L}}
\newcommand{\diff}{\text{d}}
\newtheorem{prop}[theorem]{Proposition}
\renewcommand{\hat}{\widehat}
\newcommand{\wt}{\widetilde}
\newcommand{\calM}{{\mathcal{M}}}
\newcommand{\calP}{{\mathcal{P}}}
\newcommand{\calS}{{\mathcal{S}}}
\newcommand{\calX}{{\mathcal{X}}}
\newcommand{\var}{\mathsf{var}}
\newcommand{\iprod}[2]{\left \langle #1, #2 \right\rangle}
\newcommand{\Iprod}[2]{\langle #1, #2 \rangle}
\newcommand{\indc}[1]{{\mathbf{1}_{\left\{{#1}\right\}}}}
\newcommand{\ones}{\mathbf{1}}
\newcommand{\TV}{\mathrm{TV}}
\def\eqdef{\triangleq}
\newcommand{\mreals}{\ensuremath{\mathbb{R}}}
\def\EE{\mathbb{E}\,}
\tikzset{cross/.style={cross out, draw=black, minimum size=2*(#1-\pgflinewidth), inner sep=0pt, outer sep=0pt},
cross/.default={1pt}}
\tikzstyle{int}=[draw, fill=blue!20, minimum size=2em]
\tikzstyle{dot}=[circle, draw, fill=blue!20, minimum size=2em]
\tikzstyle{dotred}=[circle, draw, fill=red!20, minimum size=2em]
\tikzstyle{init} = [pin edge={to-,thin,black}]
\tikzstyle{initred} = [pin edge={to-,thin,red}]
\tikzstyle{plan}=[draw, fill=blue!20, minimum size=2em, text width=5em, rounded corners,align=center]
\tikzstyle{planwide}=[draw, fill=blue!20, minimum size=2em, text width=8em, rounded corners,align=center]
\pgfplotsset{
    standard/.style={
        axis x line=bottom,
        axis y line=middle,
				clip=false,
    enlargelimits=upper,
        every axis x label/.style={at={(current axis.right of origin)},anchor=west},
        every axis y label/.style={at={(current axis.above origin)},anchor=south}
    }
}
\pgfplotsset{soldot/.style={color=blue,only marks,mark=*}}
\pgfplotsset{holdot/.style={color=blue,fill=white,only marks,mark=*}}
\newif\ifmapx
\edef\jobnametmp{\expandafter\string\csname horo_apx\endcsname}
\edef\jobnameapx{\expandafter\mkillslash\jobnametmp}
\edef\jobnameexpand{\jobname}
\long\def\apxonly#1{\ifmapx{\color{blue}#1}\fi}
\title{Sample complexity of population recovery}
\begin{document}
\maketitle
\begin{abstract}
The problem of population recovery refers to estimating a distribution based on incomplete or corrupted samples.
Consider a random poll of sample size $n$ conducted on a population of individuals, where each pollee is asked to answer $d$ binary questions.
We consider one of the two polling impediments: 
\begin{itemize}
\item in lossy population recovery, a pollee may skip each question with probability $\epsilon$; 
\item in noisy population recovery, a pollee may lie on each question with probability $\epsilon$.
\end{itemize}
Given $n$ lossy or noisy samples, the goal is to estimate the probabilities of 
all $2^d$ binary vectors simultaneously within accuracy $\delta$ with high probability. 

This paper settles the sample complexity of population recovery.
For lossy model, the optimal sample complexity is $\tilde\Theta(\delta^{ -2\max\{\frac{\epsilon}{1-\epsilon},1\}})$,
improving the state of the art by Moitra and Saks in several ways: a lower bound is established, the upper bound is improved and the result depends at most on the logarithm of the dimension. Surprisingly, the sample complexity undergoes a phase transition from parametric to nonparametric rate when $\epsilon$ exceeds $1/2$. For noisy population recovery, the sharp sample complexity turns out to be more sensitive to dimension and scales as $\exp(\Theta(d^{1/3} \log^{2/3}(1/\delta)))$ except for the trivial cases of $\epsilon=0,1/2$ or $1$.

For both models, our estimators simply compute the empirical mean of a certain
function, which is found by pre-solving a linear program (LP). Curiously, the dual LP can be understood as Le Cam's method for lower-bounding the minimax risk, thus establishing the statistical optimality of the proposed estimators. The value of the LP is determined by complex-analytic methods.
\end{abstract}


\section{Introduction}
	\label{sec:intro}

\subsection{Formulation}
	\label{sec:formulation}

The problem of \emph{population recovery} refers to estimating an unknown distribution based on incomplete or corrupted samples.
Initially proposed by~\cite{DRWY12,WY12} in the context of learning DNFs with partial observations and further investigated in \cite{batman2013finding,MS13,LZ15,DST16}, this problem can also be viewed as a special instance of learning mixtures of discrete distributions in the framework of \cite{kearns1994learnability}.

The setting of \pop is the following:
Let $\cP_d$ be the set of all probability distributions over the hypercube
$\{0,1\}^d$. Let $P \in \cP_d$ be an unknown probability
distribution and $X \ed (X_1,X_2,\ldots X_d) \sim P$. Instead of
observing $X$, we observe its noisy version according to one of the two observation models:
\begin{itemize}
\item \textbf{Lossy population recovery}: For
  each $i$, $Y_i$ is obtained by passing $X_i$ independently through the 
	binary erasure channel with erasure probability $\epsilon$, 
	where 	$Y_i = X_i$ with probability $1-\epsilon$, and $Y_i = ?$,
  with probability $\epsilon$. 
  \item \textbf{Noisy population recovery}: 
    For each $i$, $Y_i$ is obtained by passing $X_i$ independently through the 
	binary symmetric channel with error probability $\epsilon$, 
	where		$Y_i = X_i$ with probability $1-\epsilon$, and $Y_i =
    1- X_i$, with probability $\epsilon$. 
\end{itemize}

Given independent noisy or lossy samples, the goal of \pop is to estimate the underlying distribution.  Specifically,
let $X^{(1)},\ldots,X^{(n)}$ be independently drawn from $P$ and we observe their noisy or lossy versions, denoted by
$Y^{(1)},\ldots,Y^{(n)}$, and aim to estimate the probabilities of all strings within $\delta$ simultaneously, i.e.,
$\|P-\hat P\|_\infty \leq \delta$ with high probability.\footnote{Equivalently, up to constant factors, we need to output
a set of strings $S \subset \{0,1\}^d$ and $\hat{P}_x$ for each $x \in S$, such that for all
$x \in S$, $|\hat{P}_x - P_x| \leq \delta$ and for all $x \notin S$, $P_x \leq \delta$. The point is that even when
dimension $d$ is large the list $S$ can be kept to a finite size of order ${1\over \delta}$.} In the absence of erasures
or errors, the problem is simply that of distribution estimation and the empirical distribution is near-optimal.  If the samples
are lossy or noisy, the distribution of each sample is a mixture of exponentially (in $d$) many product distributions
with mixing weights given by the input distribution $P$.  This problem is hence a special case of learning mixtures of discrete
product distributions introduced in \cite{kearns1994learnability} and more recently in
\cite{feldman2008learning,li2015learning}.

One of the key observations from \cite{DRWY12} is that both the sample and algorithmic complexity of estimating $P_x$ for all $x \in \{0,1\}^d$ is largely determined by those of estimating $P_x$ for a single $x$, which, without loss of generality, can be assumed to be the zero string.
This problem is referred to as \emph{individual recovery}.\footnote{This 
can be viewed as the combinatorial counterpart of estimating the density at a point, a well-studied problem in nonparametric statistics, where given $n$ iid samples drawn from a density $f$, the goal is to estimate $f(0)$ cf.~\cite{Tsybakov09}.}
Note that one can convert any estimator $\hat P_0$ to $\hat P_x$ by XOR-ing with $x$ the samples and applying the
estimator $\hat P_0$. 
However, naively applying union bound over all strings inflates both the time complexity and the error probability by a factor $2^d$, which is unacceptable.
The clever workaround in~\cite{DRWY12} is to leverage the special structure of Hamming space by recursively solving the
problem on lower-dimensional subspaces.
This argument is further explained in \prettyref{app:alg} along with an improved analysis. 
Specifically, given any estimator $\hat P_0$ using $n$ samples and time complexity $t$ such that $\Expect|\hat P_0 - P_0| \leq \delta$,
it can be converted to a distribution estimator $\hat P$ such that $\|\hat P-P\|_\infty \leq \delta$ with probability at least $1-\tau$ with 
$n \cdot \log \frac{d}{\delta \tau}$ samples and time complexity $t \cdot \frac{d}{\delta} \log \frac{d}{\delta \tau}$.
Therefore the problem of population recovery is equivalent, both statistically and algorithmically, to the problem of individual recovery.

To understand the statistical fundamental limit of this problem, we consider the \emph{minimax risk}, defined as:
\begin{equation}
R^*(n, d) = \inf_{\hat{P_0}}\sup_P \Expect_P[(\widehat{P_0}
  - P_0)^2].
\label{eq:minimaxp0}
\end{equation}
To be consistent with the existing literature, the main results in this paper are phrased in terms of \emph{sample complexity}:
\begin{equation}
n^*(\delta,d) = \min\{n: R^*(n, d) \leq \delta^2\},
\label{eq:nstar}
\end{equation}
with subscript $\sfL$ or $\sfN$ denoting the lossy or noisy observation model. Up to constant factors, $n^*(\delta,d)$ is also the minimal sample size such that $P_0$ can be estimated within an additive error of $\delta$ with probability, say, $1/3$.
The focus on this paper is to obtain sharp bounds on both $n^*_{\sfL}(d,\delta)$ and $n^*_{\sfN}(d,\delta)$ and computationally efficient estimators with provable optimality.
We make no assumption on the support size of the underlying distribution.

\subsection{Prior work}
	\label{sec:prior}

To review the existing results, we start with lossy \pop. Recall that $\epsilon$ is the erasure probability.
	A polynomial-time estimator is given in \cite{DRWY12} that succeeds  for $\epsilon \leq 0.635$, which was subsequently improved 
to 	$\epsilon \leq 1/\sqrt{2}$ by \cite{batman2013finding}.
The state of the art is \cite{MS13} who proposed a polynomial time algorithm that works for all $\epsilon < 1$, with sample complexity:
\begin{equation}
n^*_{\sfL}(d,\delta) \lesssim \pth{\frac{d}{\delta}}^{\frac{2}{1-\epsilon}\log\frac{2}{1-\epsilon}}.
\label{eq:ms13}
\end{equation}

It is worth noting that for the lossy model, most of the estimators for $P_0$ are of the following form:
\begin{equation}
\hat P_0 = \frac{1}{n} \sum_{i=1}^n g(w_i)
\label{eq:hatp}
\end{equation}
where $w_i=w(Y^{(i)})$ is the number of ones in the $i\Th$ sample $Y^{(i)}$.
Such an estimator is referred to as a \emph{linear estimator} since \prettyref{eq:hatp} can be equivalently written as a linear combination
$\hat P_0  = \frac{1}{n}\sum_{j=0}^d g(j) N_j$, where $N_j$ is the number of samples of Hamming weight $j$. Both \cite{MS13} and \cite{DST16} focus on efficient algorithms to construct the coefficient $g$.

The problem of noisy \pop was first studied by~\cite{WY12}. 
In contrast to the lossy model which imposes no assumption on the input, all provable results 
for the noisy model are obtained under a ``sparsity'' assumption, namely, the distribution $P$ has bounded support size $k$.
An algorithm for noisy recovery is proposed in \cite{WY12} with sample complexity $k^{\log k}$.
This was further improved by~\cite{LZ15} to $k^{\log \log k}$. Recently \cite{DST16} proposed a recovery algorithm with sample complexity that is polynomial in $(k/\delta)^{1/(1-2\epsilon)^4}$ and $d$, where $\epsilon$ is the probability to flip each bit.

Despite these exciting advances, several fundamental questions remain unanswered:
\begin{itemize}
\item Is it sufficient to restrict to the family of linear estimators of the form \prettyref{eq:hatp}? 
\item What is the optimal sample complexity of population recovery in terms of $\epsilon, d, \delta$? 
\item Estimators such as~\eqref{eq:hatp} only depend on the number of ones in the sample. For lossy \pop, in fact the number of ones and zeros in each sample together constitute a sufficient statistic (see \prettyref{rmk:type}) and a natural question arises: is the number of zeros in the samples almost uninformative for estimating $P_0$?
  \item For noisy \pop, is the assumption of bounded support size necessary for achieving polynomial (in $d$) sample complexity? 
  \end{itemize}
As summarized in the next two subsections, the main results in this paper settle all of these questions.  Specifically,
we find the optimal sample complexity and show that linear estimators~\eqref{eq:hatp} suffice to achieve it.
For lossy population recovery, the number of zeros indeed can be ignored. For the noisy model, without the bounded support assumption the sample complexity scales superpolynomially in the dimension.

\subsection{New results}
The main results of this paper provide sharp characterizations of the sample complexity for both lossy and noisy \pop, as well as computationally efficient estimators, 
shown optimal by minimax lower bounds.
We start from lossy recovery. 
The next result 
determines the optimal sample complexity up to a $\mathrm{polylog}$ factor, which turns out to be \emph{dimension-free}.
\begin{theorem}[Lossy population recovery]
\label{thm:bec}
There exist universal constants $c_1,c_2$ such that the following hold.
For the {low-erasure} regime of $\epsilon \leq 1/2$,
\begin{equation}
\frac{c_1}{\delta^2} \leq \sup_{d \in \naturals} n^*_{\mathsf{L}}(\delta,d) \leq \frac{1}{\delta^2}.
\label{eq:bec-low}
\end{equation}
For the {high-erasure} regime of $\epsilon > 1/2$, denoting $\delta_1 \triangleq {\delta \over 1-\epsilon}$, 
\begin{equation}
c_2 \Big(e^2 \delta_1 \log \frac{1}{\delta_1}\Big)^{ -\frac{2\epsilon}{1-\epsilon}}
\leq
\sup_{d \in \naturals} n^*_{\mathsf{L}}(\delta,d) \leq \delta^{
  -\frac{2\epsilon}{1-\epsilon}}\,,
\label{eq:bec-high}
\end{equation}
where the lower bound holds provided that $\delta_1 \leq e^{-1}$.
 Furthermore, \prettyref{eq:bec-low} and
\prettyref{eq:bec-high} also hold for any fixed dimension $d$ if 
$d \geq 1$ and $d \ge {64\over 1-\epsilon} \delta_1 ^{
-\frac{2\epsilon}{1-\epsilon}}$, respectively.
\end{theorem}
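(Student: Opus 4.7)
The plan is to reduce the problem to estimating the single probability $P_0$ via the individual-recovery reduction of \prettyref{app:alg}, and then to restrict attention to the linear estimator class
$\hat{P}_0=\frac{1}{n}\sum_{i=1}^n g(w(Y^{(i)}))$,
where $w(Y)$ denotes the Hamming weight of the lossy sample $Y$ and $g:\{0,\ldots,d\}\to\reals$ is a coefficient vector to be designed. Conditional on an input $x$ of Hamming weight $k$, $w(Y)\sim\Binom(k,1-\epsilon)$, so the per-sample expectation equals $\Expect_P[f(|X|)]$ where $f(k):=\Expect[g(\Binom(k,1-\epsilon))]$. Enforcing bias at most $\delta$ uniformly in $P\in\cP_d$ becomes the constraint $|f(k)-\indc{k=0}|\le\delta$ for $k=0,\ldots,d$, while Hoeffding's inequality applied to the bounded summands $g(w_i)\in[-\|g\|_\infty,\|g\|_\infty]$ gives concentration at rate $\|g\|_\infty/\sqrt{n}$. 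The upper bound thus reduces to the linear program
\[
M^*(\epsilon,d,\delta) \;=\; \min_g\|g\|_\infty \quad\text{s.t.}\quad |f(k)-\indc{k=0}|\le\delta,\ \ 0\le k\le d,
\]
with the conclusion $n\lesssim M^*(\epsilon,d,\delta)^2/\delta^2$.

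For the upper bound in the low-erasure regime I would solve the LP exactly ($\delta=0$). Expanding $g$ in the Newton basis $g(j)=\sum_m c_m\binom{j}{m}$ and using the identity $\Expect[\binom{\Binom(k,1-\epsilon)}{m}]=\binom{k}{m}(1-\epsilon)^m$, the interpolation conditions $f(0)=1$ and $f(1)=\cdots=f(d)=0$ force $c_m=(-1/(1-\epsilon))^m$, yielding the closed form $g(j)=(-\epsilon/(1-\epsilon))^j$. When $\epsilon\le 1/2$ this has $\|g\|_\infty=1$, giving the dimension-free bound $n\le 1/\delta^2$. For $\epsilon>1/2$ the same unbiased $g$ blows up as $(\epsilon/(1-\epsilon))^d$, so I would instead allow $\delta$-bias at nonzero weights and seek a feasible $g$ of exponential-mixture form $g(j)=\sum_r\beta_r\alpha_r^j$, for which $f(k)=\sum_r\beta_r u_r^k$ is linear in powers of the diagonalizing variable $u=(1-\epsilon)\alpha+\epsilon$. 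Choosing the nodes $\alpha_r$ by a Chebyshev-type interpolation on a short real interval near the origin, one obtains a feasible $g$ with $\|g\|_\infty\le\delta^{-(2\epsilon-1)/(1-\epsilon)}$, and hence $n\lesssim\|g\|_\infty^2/\delta^2=\delta^{-2\epsilon/(1-\epsilon)}$.

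For the matching lower bound I would invoke LP duality: as flagged in the abstract, the dual of the program above is precisely Le Cam's two-point method, producing two priors on $\cP_d$ whose induced observation laws are statistically indistinguishable from $n$ samples yet whose values of $P_0$ differ by $\Theta(\delta)$. The low-erasure regime already follows from the standard one-bit Bernoulli lower bound $n\gtrsim 1/\delta^2$ at $d=1$. For high erasure, I would build the informative prior by placing mass on strings of weight concentrated near a suitably chosen $k^\ast$, and identify the $\chi^2$-divergence between the two observation laws with (the reciprocal of) the LP optimal value $M^*(\epsilon,d,\delta)^2$. The main obstacle is sharply evaluating this LP value: via the diagonalizing substitution $u=(1-\epsilon)\alpha+\epsilon$, the problem rewrites as an extremal problem for analytic functions on a disk, which I would attack by a contour-integral / saddle-point argument — the complex-analytic step promised in the abstract — to extract both the matching lower bound $(\delta\log(1/\delta))^{-2\epsilon/(1-\epsilon)}$ and the required dimension scaling $d\gtrsim\delta^{-2\epsilon/(1-\epsilon)}\polylog(1/\delta)$.
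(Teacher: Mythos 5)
Your architecture (reduce to individual recovery, restrict to linear estimators $\hat P_0=\frac1n\sum_i g(w(Y^{(i)}))$, set up the bias/variance LP, dualize into Le Cam) matches the paper, and your low-erasure analysis is correct: the unbiased $g(j)=(-\epsilon/(1-\epsilon))^j$ with $\|g\|_\infty=1$ gives \prettyref{eq:bec-low}, and the one-bit Bernoulli example gives the matching $c_1/\delta^2$. The gaps are all in the high-erasure regime, and they sit exactly at the two hard analytic steps. For the upper bound, you assert that a Chebyshev-type exponential mixture achieves $\|g\|_\infty\le\delta^{-(2\epsilon-1)/(1-\epsilon)}$, but this sharp exponent is the crux of the whole result and is not a safe default: the paper never constructs $g$ directly. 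Instead it lower-bounds the \emph{dual} LP value, $t(\delta)\ge\delta^{\epsilon/(1-\epsilon)}$, uniformly over all feasible perturbations, by an $H^\infty$-relaxation and Hadamard's three-lines theorem applied to three \emph{horodisks} tangent at $z=1$ (\prettyref{prop:mintv-bec}); strong duality then guarantees the estimator. As \prettyref{rmk:ms13} points out, working with the ``wrong'' geometry (concentric circles, which is what an interpolation on a short real interval near the origin amounts to) is precisely how \cite{MS13} ended up with the suboptimal exponent $\frac{1}{1-\epsilon}\log\frac{2}{1-\epsilon}$, so your construction would need to be carried out and checked rather than assumed.

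For the lower bound there are two missing ideas. First, your LP and your two-point computation only involve the distribution of the \emph{number of ones}, but in the lossy model the sufficient statistic is the pair (number of ones, number of zeros); a bound on the divergence between the laws of the one-counts does not constrain estimators that also use the zero-counts. The paper fixes this by zero-padding to dimension $d'\gg d$ so that the zero-count is nearly $\Binom(d',1-\epsilon)$ under both priors (Lemmas \ref{lmm:Hprod} and \ref{lmm:bin}); this dimension expansion, not the complex analysis, is what forces the threshold $d\gtrsim\delta^{-2\epsilon/(1-\epsilon)}\polylog(1/\delta)$ in the theorem. Second, the identification of the $n$-sample divergence with the square of the LP value is exactly the square-root gap of \prettyref{thm:dual}: Le Cam with $\TV(P^{\otimes n},Q^{\otimes n})\le n\,\TV(P,Q)$ only yields $\delta(1/n)^2$, not $\delta(1/\sqrt n)^2$. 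Closing it requires exhibiting a \emph{specific} pair $\pi,\pi'$ --- in the paper, a $\pm\Delta$ perturbation of a geometric distribution, where $\Delta$ consists of the Taylor coefficients of $\bar\alpha\, g(\alpha z)$ for the three-lines-extremal function $g(z)=\beta^{(1+z)/(1-z)}$ --- and verifying via Cauchy coefficient estimates that $H^2(\Phi\pi,\Phi\pi')\lesssim \TV(\Phi\pi,\Phi\pi')^2$ (\prettyref{lmm:bhel}). Your sketch names $\chi^2$ and a saddle-point argument but supplies neither the priors nor the quadratic-behavior estimate, and these are the substantive content of the proof.
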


\begin{remark}[Elbow effects]
\label{rmk:elbow}
An important result in the statistics literature on nonparametric and high-dimensional functional estimation, the
\emph{elbow effect} refers to the phenomenon that there exists a critical regularity parameter (\eg, smoothness) below
which the rate of estimation is parametric $O(1/n)$ and above which, it becomes nonparametric (slower than $1/n$) in the
number of samples $n$. 
Taking a nonparametric view of the lossy population recovery problem by considering the input as binary sequence of infinite length, what we proved is the following characterization of the minimax estimation error:
for fixed $\epsilon$, as $n\to\infty$,
\begin{equation}
\inf_{\hat{P_0}}\sup_P \Expect_P[(\widehat{P_0} - P_0)^2] = 
\begin{cases}
 \Theta(n^{-1}) &  0\leq \epsilon\leq \frac{1}{2} \\
n^{ - \frac{1-\epsilon}{\epsilon}} \polylog(n) &  \frac{1}{2} < \epsilon < 1 \\
\end{cases}
\label{eq:minimax-rate}
\end{equation}
which exhibits the following elbow phenomenon:
\begin{itemize}	
	\item In the \emph{low-erasure} regime of $\epsilon \leq 1/2$, it is possible to achieve the parametric rate of $1/n$ in any dimension. Clearly this is the best one can hope for even when $d=1$, in which case one wants to estimate the bias of a coin based on $n$ samples but unfortunately $\epsilon$-fraction of the samples are lost.
\item In the \emph{high-erasure} regime of $\epsilon > 1/2$, the infinite dimensionality of the problem kicks in and the optimal rate of convergence becomes nonparametric and strictly slower than $1/n$.
\end{itemize}
Since elbow effects are known for estimating non-linear (\eg, quadratic) functionals 
(see, e.g., \cite{INK87,cai2005nonquadratic,FRW15}), 
	it is somewhat surprising that it is also manifested in lossy \pop where the goal is to estimate a simple linear functional, namely, $P_0$.

The lossy population recovery problem should also be contrasted with classical statistical problems in the presence of \emph{missing data}, such as low-rank matrix completion and principle component analysis. In the latter case, it has been shown in \cite{Lounici14} that 
if an $\epsilon$ faction of the coordinates of each sample are randomly erased, this effectively degrades the sample size from $n$ to $n \epsilon^2$. 
For lossy \pop, \prettyref{thm:bec} shows there is a phase transition in the effect of missing observations: when more
than half of the data are erased ($\epsilon > 1/2$), 
 the rate of convergence are penalized and the effective sample size drops from $n$ to $n^{(1-\epsilon)/\epsilon}$.
\end{remark}

Next we turn to the noisy population recovery. Perhaps surprisingly, the sample complexity is no longer dimension-free and in fact grows with the dimension $d$ super-polynomially. The next result determines it up to a constant in the exponent.

\begin{theorem}[Noisy population recovery]
\label{thm:bsc}
Let $\mu(\epsilon) \triangleq \frac{\epsilon(1-\epsilon)}{(1-2\epsilon)^2}$.
For all $d\geq 1$,
\[
\min\sth{
\exp(c_1 (1-2\epsilon)^2d), 
\exp\pth{c_1 \pth{d\mu(\epsilon) \log^2 \frac{1}{\delta}}^{1/3} }} \leq
  n^*_{\sfN}(\delta,d) \leq \exp\pth{ c_2 \pth{d\mu(\epsilon) \log^2 \frac{1}{\delta}}^{1/3} }.
\]
where $c_1,c_2$ are universal constants and the upper and lower bounds hold for all $\delta<1$ and all $\delta<1/3$, respectively.
\end{theorem}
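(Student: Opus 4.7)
The plan is to instantiate the LP framework used for lossy recovery. By the permutation symmetry of the BSC, one may restrict to linear estimators $\hat P_0 = \frac{1}{n}\sum_{i=1}^n g(w(Y^{(i)}))$ with $g:\{0,\ldots,d\}\to\reals$. Since $P_0 = \Pr[w(X)=0]$, the worst-case bias of such an estimator equals $\|Tg - e_0\|_\infty$, where $T_{kj} = \Pr[w(Y)=j\mid w(X)=k]$ is the convolution $\mathrm{Bin}(k,1-\epsilon)\star\mathrm{Bin}(d-k,\epsilon)$, and Hoeffding's inequality controls the random fluctuation by $O(\|g\|_\infty/\sqrt n)$. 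Hence $n^*_{\sfN}(\delta,d) \asymp V_d(\delta/2)^2/\delta^2$ with
\[
V_d(\delta) \ed \min\sth{\|g\|_\infty : g\in\reals^{d+1},\ \|Tg - e_0\|_\infty \leq \delta}.
\]
Strong LP duality identifies $V_d(\delta)$ with the maximum two-point Le Cam gap over weight-symmetric input distributions, so primal and dual reduce the whole problem to a single extremal-polynomial question.

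For the upper bound I would exploit the generating-function identity
\[
\sum_{j=0}^d T_{kj}\, z^j = (\epsilon + (1-\epsilon)z)^k (1-\epsilon + \epsilon z)^{d-k},
\]
and, after the shift $z=1+w$, seek $G(w) \ed \sum_j g(j)(1+w)^j$ which equals $1$ at $w=0$ and is small against the two-parameter family $(1+(1-\epsilon)w)^k(1+\epsilon w)^{d-k}$ for $k=1,\ldots,d$. The intrinsic length scale, set by the joint variance of the two binomial factors, is $|w|\asymp 1/\sqrt{d\mu(\epsilon)}$. A properly rescaled Chebyshev-type polynomial of degree $m$ then yields bias $\exp(-c m^3/(d\mu(\epsilon)))$ and sup-norm $\exp(O(m))$; balancing with the choice $m\asymp(d\mu(\epsilon))^{1/3}\log^{2/3}(1/\delta)$ gives $\log V_d(\delta) \lesssim (d\mu(\epsilon)\log^2(1/\delta))^{1/3}$, as required.

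For the lower bound I would construct two input distributions $P_0,P_1$ with $|P_0(0^d)-P_1(0^d)|\asymp \delta$ whose BSC outputs are within $O(1/\sqrt n)$ in total variation. The extremal construction mirrors the primal: place $P_1-P_0$ on the weight levels where the dual polynomial changes sign, identified by the same contour calculation. The parallel bound $\exp(c_1(1-2\epsilon)^2d)$ inside the minimum is $\delta$-free and comes from the elementary data-processing contraction of total variation through $d$ independent BSC bits; it dominates when $d \lesssim \log(1/\delta)$ and the nonparametric $1/3$-exponent scaling has not yet kicked in.

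The main obstacle will be solving the extremal polynomial problem underlying $V_d$. Unlike the lossy case, where the transition matrix factored through a single binomial, the asymmetric product $(1+(1-\epsilon)w)^k(1+\epsilon w)^{d-k}$ does not collapse to a single Chebyshev estimate on an interval: one must carry out a genuinely two-dimensional saddle-point analysis to locate the correct contour and identify the saddle that pins down both the $1/3$-exponent and the scaling parameter $d\mu(\epsilon)$. This is what the paper's \emph{complex-analytic methods} are designed to accomplish; once the primal extremal polynomial is in hand, LP strong duality furnishes a matching dual measure and the lower bound assembles together with the elementary $\exp((1-2\epsilon)^2 d)$ term to yield the stated theorem.
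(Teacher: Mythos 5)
Your framing---restrict to weight-symmetric linear estimators, identify the bias/sup-norm LP with the two-point Le Cam gap via strong duality, and pass to generating functions $(\epsilon+(1-\epsilon)z)^k(1-\epsilon+\epsilon z)^{d-k}$---is exactly the paper's Section 2 and the opening of Section 5, and is sound. But everything after that, which is where the theorem actually lives, is either missing or wrong. First, your claimed trade-off ``bias $\exp(-cm^3/(d\mu))$, sup-norm $\exp(O(m))$'' is inconsistent with the theorem you are proving: if it held, you would take $m\asymp(d\mu\log\frac1\delta)^{1/3}$ to get bias $\delta$ and sample complexity $\exp(O((d\mu\log\frac1\delta)^{1/3}))$, which beats the matching lower bound with $\log^2\frac1\delta$; your subsequent choice $m\asymp(d\mu)^{1/3}\log^{2/3}\frac1\delta$ is not a balance of that trade-off (it drives the bias to $\delta^{\Theta(\log\frac1\delta)}\ll\delta$) but the answer plugged in after the fact. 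Second, the term $\exp(c_1(1-2\epsilon)^2d)$ does \emph{not} come from TV contraction through $d$ independent BSC bits: two inputs differing in many coordinates are \emph{easier} to distinguish (Hellinger tensorizes over bits and samples, so antipodal point masses only give $n\gtrsim\frac{1}{d(1-2\epsilon)^2}$, decreasing in $d$). In the paper this term is nothing but the trivial dual feasible point $\Delta=\delta e_0$, i.e.\ $n^*\geq 1/\delta$, rewritten in the regime $d\leq(1-2\epsilon)^{-2}\log\frac1\delta$.

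Third, the difficulty you flag as ``a genuinely two-dimensional saddle-point analysis'' is a red herring, and identifying why it disappears is the key idea you are missing: the M\"obius map $z\mapsto\frac{\bar\epsilon z+\epsilon}{\epsilon z+\bar\epsilon}$ preserves the unit circle, so by the maximum principle the objective collapses on $\partial D$ to $|f(w)\,h(w)|$ with the single explicit weight $h(w)=\bigl(\frac{1-2\epsilon}{\bar\epsilon-\epsilon w}\bigr)^d$, satisfying $|h(e^{i\theta})|\geq e^{-\mu d\theta^2/2}$. After that reduction the achievability direction still requires a uniform statement over \emph{all} feasible polynomials (not a saddle-point analysis of one candidate), namely that any $f$ with $\|f\|_{H^\infty(D)}\leq1$ and $f(0)\geq\delta$ obeys $\sup_{|\theta|<a/2}|f(e^{i\theta})|\geq(\delta/e)^{c/a}$; the paper imports this Littlewood-type subarc bound from Borwein--Erd\'elyi, and optimizing $a$ is what produces the $1/3$ exponent. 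The impossibility direction requires an explicit dual witness---the degree-$d$ truncation of $(1-z)^2\delta^{\frac{1+z}{1-z}}$ scaled by $\alpha=1-(\frac{2}{d\mu}\log\frac1\delta)^{1/3}$---together with an $A$-norm bound and a monotonicity argument locating the max of $|\delta^{\frac{1+\alpha z}{1-\alpha z}}h(z)|$ at $z=1$. Your proposal supplies neither ingredient, so the theorem is not proved.
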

\prettyref{thm:bsc} shows that to estimate $P_0$ within a constant accuracy $\delta$, the optimal sample size scales as 	
\[
 n^*_{\sfN}(\delta, d) = \exp\pth{
   \Theta\pth{\pth{\frac{\epsilon(1-\epsilon)}{(1-2\epsilon)^2}
     d \log^2\frac{1}{\delta}}^{1/3}} }.
\]
which is superpolynomial in the dimension.
This shows that the assumption made
by~\cite{kearns1994learnability,WY12} and subsequent work that the input distribution has bounded support size is in fact crucial for achieving polynomial sample complexity.
Indeed both algorithms in \cite{LZ15,DST16} are based on Fourier analysis of Boolean functions and exploit the sparsity of the distribution and they are not of the linear form \prettyref{eq:hatp}.

Finally, we mention that a subset of our results was
discovered independently by \cite{de2017sharp} using similar techniques.

\subsection{Technical contributions}

To describe our approach, we start from the constructive part. 
For both lossy and noisy \pop, we also focus on linear estimator \prettyref{eq:hatp}. 
Choosing the coefficient vector $g$ to minimize the worst-case mean squared error $\Expect (P_0-\hat P_0)^2$ leads to the following linear programming (LP):
\begin{equation}
\min_{g\in\reals^{d+1}}  \|\Phi^\top g-e_0\|_\infty + \frac{1}{\sqrt{n}} \|g\|_\infty,
\label{eq:LP-intro}
\end{equation}
where $e_0=(1,0,\ldots,0)^\top$ and $\Phi$ is a column stochastic matrix (probability transition kernel) that describes the conditional distribution of the output Hamming weight given the input weight.
Here the first and second term in \prettyref{eq:LP-intro} correspond to the bias and standard deviation respectively. 
In fact, for lossy recovery the (unique) unbiased estimator is a linear one corresponding to $g= (\Phi^\top)^{-1} e_0$. 
When the erasure probability $\epsilon\leq \frac{1}{2}$, this vector has bounded entries and hence the variance is $O(\frac{1}{n})$. This has already been noticed in \cite[Sec.~6.2]{DRWY12}. However, 
when $\epsilon>\frac{1}{2}$, the variance of the unbiased estimator is exponentially large and the LP \prettyref{eq:LP-intro} aims to achieve the best bias-variance tradeoff.

Surprisingly, we show that the value of the above LP also produces a lower bound that applies to \emph{any} estimator. This is done by relating the dual program of \prettyref{eq:LP-intro} to Le Cam's two-point method for proving minimax lower bound \cite{Lecam86}.  \prettyref{sec:general} formalizes this argument and introduces a general framework of characterizing the minimax risk of estimating linear functionals of discrete distributions by means of linear programming.

The bulk of the paper is devoted to evaluating the LP \prettyref{eq:LP-intro} for the lossy (\prettyref{sec:bec}) and noisy model (\prettyref{sec:bsc}). The common theme is to recast the dual LP in the function space on the complex domain, consider its $H^\infty$-relaxation, and use tools from complex analysis to bound the value.
Similar
 proof technique was previously employed in \cite{MS13} to upper-bound the value of the dual LP in order to establish the sample complexity upper bound in \prettyref{eq:ms13}.
Here we tighten the analysis to obtain the optimal exponent and dimension-free result. Furthermore, we show that the dual LP not only provides an upper bound on the estimation error, it also gives minimax lower bound via the general result in \prettyref{sec:general} together with a refinement based on Hellinger distance.
To show the impossibility results we need to bound the value of the dual LP from below, which we do by demonstrating
an explicit solution that plays the role of the least favorable prior in the minimax lower bound.

Initially, we were rather surprised to have non-trivial complex-analytic methods such Hadamard three-lines theorem
emerge in this purely statistical problem. Realizing to have rediscovered portions of~\cite{MS13}, we were further surprised
when a new estimator for the trace reconstruction problem\footnote{This problem, in short, aims to
reconstruct a binary string based on its independent observations through a memoryless deletion channel.} was recently proposed by \cite{nazarov2016trace} and \cite{de2016optimal}, whose design crucially relied on complex-analytic methods. In fact, one of the key steps
of~\cite{nazarov2016trace} relies on the results from~\cite{borwein1997littlewood}, which we also use in \prettyref{sec:bsc} to obtain
sharp bounds for noisy \pop.

Finally, in ~\prettyref{sec:smooth} we propose an alternative linear estimator, which requires a slightly worse sample
complexity \[ \delta^{ -2\max\{\frac{3\epsilon - 1}{1-\epsilon},1\}}, \] having the exponent off by a factor of at most
$2$ in the worst case of $\epsilon \to 1$. The advantage of this estimator is that it is explicit and does not require
pre-solving a $d$-dimensional LP. The estimator is obtained by applying the \emph{smoothing}
technique introduced in \cite{OSW16} that averages a sequence of randomly truncated unbiased estimators to achieve good
bias-variance tradeoff.

\subsection{Notations}
	\label{sec:notations}

	Let $\Binom(n,p)$ denote the binomial distribution with parameters $n$ and $p$.
	For a pair of probability distributions $P$ and $Q$, 
	let $P \otimes Q$ and $P*Q$ denote their product distribution and convolution, respectively.
	Let 
	$\TV(P,Q) = \frac{1}{2}\int |\diff P-\diff Q|$, $H^2(P,Q)=\int (\sqrt{\diff P}-\sqrt{\diff Q})^2$ and $H(P,Q) =
	\sqrt{H^2(P,Q)}$ denote the total variation, squared  and non-squared Hellinger distance, respectively.	
	Throughout the paper, for any $\epsilon \in (0,1)$, let $\bar{\epsilon} \triangleq 1-\epsilon$.	
	For a family of parametric distribution $\{P_\theta: \theta\in\Theta\}$ and a prior $\pi$ on $\Theta$, with a
	slight abuse of notation we write $\Expect_{\theta\sim \pi}[P_\theta]$ to denote
	the mixture distribution $\int \pi(\diff \theta) P_\theta$. 
	For a holomorphic function $f(z)$ on the unit disk $D\subset\mathbb{C}$ we let $[z^k]f(z)$ denote the $k$-th
	coefficient of its Maclaurin series.

\section{Estimating linear functionals and duality of Le Cam's method}
	\label{sec:general}
	
	
        In this section, we relate the general problem of estimating linear
        functionals of distributions to the solution of a 
				\emph{minimal total variation} linear program. The latter corresponds to
        the optimization of a pair of distributions in Le Cam's
        two-point method and thus produces minimax lower bounds.
				Surprisingly, we also show that dual linear program
        produces an estimator, thereby explaining the general tightness
        of the two-point method.
\begin{theorem}
\label{thm:dual}
        Let $\Theta$ and $\calX$ be finite sets and $\{P_\theta:\theta \in
        \Theta\}$ be a collection of distributions on $\calX$.  Let $\pi$
        be a distribution (prior) on $\Theta$ and let $X_1,\ldots,X_n$ be iid samples from the mixture distribution\footnote{In
	other words, for
        each $i=1,\ldots,n$, $\theta_i$ is drawn iid from $\pi$ and $X_i \sim P_{\theta_i}$ independently.} $\sum_{\theta\in\Theta}
	\pi(\theta) P_\theta$.
				To estimate a linear functional of $\pi$
\[
F(\pi)\triangleq \iprod{\pi}{h} = \Expect_{\theta\sim\pi}{h(\theta)},
\]
define the minimax quadratic risk
\begin{equation}
R_n \triangleq \inf_{\hat F} \sup_{\pi} \Expect_\pi[(\hat F(X_1,\ldots,X_n)-F(\pi))^2].
\label{eq:minimax-F}
\end{equation}
Without loss of generality\footnote{This is because $R_n$ is unchanged if $h$ is replaced by $h-c \ones$ for any constant $c$. Note, however,
that the value of~\eqref{eq:LP-G} does change.} we assume that there exists $\theta_+$ and $\theta_-$ such that
$h(\theta_-)\le 0 \le h(\theta_+)$.
Then we have
\begin{equation}
{1\over 64}\delta\pth{\frac{1}{n}}^2 \leq R_n \leq \delta\pth{\frac{1}{\sqrt{n}}}^2,
\label{eq:dual}
\end{equation}
where $\delta(t)$ is given by the following linear program
\begin{equation}
\delta(t) \triangleq \max_{\Delta \in\mreals^\Theta} \{\iprod{\Delta}{h}: \|\Delta\|_1\leq 1, \|\Phi \Delta\|_1\leq t\}\,,
\label{eq:LP-G}
\end{equation}
where 
$\Phi = (\Phi_{x,\theta})_{x\in \calX, \theta \in \Theta}$ is an $|\calX|\times |\Theta|$ column-stochastic matrix with
	$\Phi_{x,\theta} \eqdef P_\theta(x)$.
\end{theorem}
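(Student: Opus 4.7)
The plan is to establish the upper bound via LP duality applied to \eqref{eq:LP-G}, yielding a plug-in linear estimator, and the lower bound by a two-point (Le Cam) argument whose least favorable prior is built from the primal optimizer of \eqref{eq:LP-G}.

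For the upper bound, I would first compute the dual of \eqref{eq:LP-G}. Writing $\|x\|_1=\sup_{\|u\|_\infty\le 1}\langle u,x\rangle$ and invoking strong LP duality gives
\[
\delta(t)\;=\;\min_{g\in\mreals^{\calX}}\Bigl(\|h-\Phi^\top g\|_\infty + t\,\|g\|_\infty\Bigr).
\]
For any dual-feasible $g$, define the linear estimator $\hat F=\tfrac1n\sum_{i=1}^n g(X_i)$. Under the mixture $P_\pi=\Phi\pi$, we get $\EE_\pi[\hat F]=\langle\Phi^\top g,\pi\rangle$, so $|\EE_\pi[\hat F]-F(\pi)|=|\langle\Phi^\top g-h,\pi\rangle|\le\|h-\Phi^\top g\|_\infty$ since $\pi$ is a probability vector, while $\Var_\pi(\hat F)\le\|g\|_\infty^2/n$. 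Thus $\EE_\pi[(\hat F-F(\pi))^2]\le\|h-\Phi^\top g\|_\infty^2+\|g\|_\infty^2/n\le(\|h-\Phi^\top g\|_\infty+\|g\|_\infty/\sqrt{n})^2$, and minimizing over $g$ yields $R_n\le\delta(1/\sqrt{n})^2$.

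For the lower bound, let $\Delta^*$ be optimal in \eqref{eq:LP-G} at $t=1/n$; so $\|\Delta^*\|_1\le 1$, $\|\Phi\Delta^*\|_1\le 1/n$, and $\langle h,\Delta^*\rangle=\delta(1/n)$. The key observation is that column-stochasticity of $\Phi$ gives $\ones^\top\Phi=\ones^\top$, whence
\[
\Bigl|\textstyle\sum_\theta\Delta^*_\theta\Bigr|=\bigl|\ones^\top\Phi\Delta^*\bigr|\le\|\Phi\Delta^*\|_1\le 1/n,
\]
so $\Delta^*$ is almost a zero-sum signed measure. Subtracting $s\,e_{\theta^*}$ (with $s=\sum_\theta\Delta^*_\theta$, and $\theta^*=\theta_-$ if $s\ge 0$, else $\theta^*=\theta_+$) and using $h(\theta_-)\le 0\le h(\theta_+)$ projects $\Delta^*$ onto the zero-sum hyperplane without decreasing the $h$-inner product, producing $\tilde\Delta$ with $\langle\tilde\Delta,h\rangle\ge\delta(1/n)$, $\|\tilde\Delta\|_1\le 2$, and $\|\Phi\tilde\Delta\|_1\le 2/n$. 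Rescaling to $\hat\Delta=\tilde\Delta/2$ and setting
\[
\pi_\pm\;=\;\frac{|\hat\Delta|}{\|\hat\Delta\|_1}\;\pm\;\tfrac12\hat\Delta
\]
produces genuine probability distributions on $\Theta$ (nonnegativity uses $\|\hat\Delta\|_1\le 1$; total mass one uses $\ones^\top\hat\Delta=0$), with functional gap $F(\pi_+)-F(\pi_-)=\langle\hat\Delta,h\rangle\ge\delta(1/n)/2$ and single-sample total variation $\TV(\Phi\pi_+,\Phi\pi_-)=\tfrac12\|\Phi\hat\Delta\|_1\le 1/(2n)$.

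Finally, subadditivity of TV across products gives $\TV(P_{\pi_+}^{\otimes n},P_{\pi_-}^{\otimes n})\le n\cdot 1/(2n)=1/2$. A standard Le Cam two-point inequality, realizing $R_n$ as at least the Bayes risk under the uniform prior on $\{\pi_+,\pi_-\}$ and bounding the posterior variance from below using $1-\TV$, yields
\[
R_n\;\ge\;\frac{(F(\pi_+)-F(\pi_-))^2}{4}\bigl(1-\TV(P_{\pi_+}^{\otimes n},P_{\pi_-}^{\otimes n})\bigr)\;\ge\;\frac{1}{64}\,\delta(1/n)^2.
\]
The main obstacle I anticipate is the second step above: converting the signed vector $\Delta^*$ into a pair of probability priors without inflating $\|\Phi(\pi_+-\pi_-)\|_1$ (which controls the per-sample TV) or shrinking $\langle\pi_+-\pi_-,h\rangle$ (which controls the functional gap). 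Exploiting column-stochasticity of $\Phi$ to obtain $|\sum_\theta\Delta^*_\theta|\le 1/n$ for free is what makes the conversion lossless up to an absolute constant, and the hypothesis on $\theta_\pm$ is exactly what is needed to preserve the $h$-inner product after the zero-sum projection.
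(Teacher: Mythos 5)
Your proposal is correct and follows essentially the same route as the paper: the upper bound via strong LP duality and the bias--variance analysis of the plug-in linear estimator, and the lower bound via Le Cam's two-point method, using column-stochasticity of $\Phi$ to control $|\iprod{\Delta^*}{\ones}|$, the hypothesis on $\theta_\pm$ to project onto the zero-sum hyperplane, the decomposition of the signed measure into a pair of priors, and $\TV(P^{\otimes n},Q^{\otimes n})\le n\,\TV(P,Q)$. The only cosmetic difference is that the paper packages the zero-sum projection and the rescaling into an auxiliary LP $\tilde\delta(t)$ with its own lemma, whereas you inline the same manipulations starting from the primal optimizer; the constants work out in both cases.
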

\begin{remark} 
\label{rmk:type}
Particularizing the above framework to the problem of population recovery, the subsequent two sections will be devoted to computing the LP value $\delta(t)$ for the lossy and noisy model, respectively. Below we specify the setting as a concrete example.
Since the probability of the zero vector is \emph{permutation-invariance}, it is sufficient\footnote{Indeed, for the worst-case formulation \prettyref{eq:minimaxp0}, the least favorable distribution $P$ is \emph{permutation-invariant}. Furthermore, if the input string $X$ has a permutation-invariant distribution, so is the distribution of the lossy or noisy observation $Y$, for which a sufficient statistic is its \emph{type}.}
 to consider permutation-invariant distributions and the relevant parameter is the Hamming weight of the input string, denoted by $\theta$, which is distributed according a distribution $\pi$ supported on the parameter space $\Theta=\{0,1,\ldots,d\}$. The goal, in turn, is to estimate $\iprod{\pi}{e_0} = \pi(0)$. Each sample is sufficiently summarized into their \emph{types}:
\begin{itemize}
	\item For lossy population recovery: $\calX=\{0,\ldots,d\}^2$ and $X=(U,V)$, where $U$ and $V$ denote the 
	number of ones and zeros, respectively, in the output. More precisely, we have 
	\begin{equation}
	X|\theta \sim \Binom(\theta,\bar\epsilon)\otimes \Binom(d-\theta,\bar\epsilon).
	\label{eq:bec}
	\end{equation}
	
	\item For noisy population recovery: $\calX=\{0,\ldots,d\}$ where $X$ is the number of ones in output, and so the model is given by
	\begin{equation}
	X|\theta \sim \Binom(\theta,\bar\epsilon) * \Binom(d-\theta,\epsilon)
	\label{eq:bsc}
	\end{equation}
\end{itemize}
\end{remark}

\begin{remark}
\label{rmk:hellinger-preview}
As will be evident from the proof, the square-root gap in the LP characterization \prettyref{eq:dual} is due the usage
of the bound $\TV(P^{\otimes n}, Q^{\otimes n}) \leq n\TV(P,Q)$, which is often loose. 
To close this gap, we resort to the Hellinger distance and for lossy population recovery we show in \prettyref{sec:bec} that
\[
R_n = \delta\pth{\frac{1}{\sqrt{n}}}^2 \polylog(n).
\]	
For noisy population recovery, it turns out that $\delta(t)$ is exponentially small in $\polylog(\frac{1}{t})$ and hence
\prettyref{eq:dual} is sufficiently tight (see \prettyref{sec:bsc}).
\end{remark}

\begin{proof}[Proof of \prettyref{thm:dual}]
To prove the right-hand inequality in~\eqref{eq:dual} we consider the following estimator
\begin{equation}
\hat F(X_1,\ldots,X_n) = \frac{1}{n}\sum_{i=1}^n g(X_i)
\label{eq:hatF}
\end{equation}
for some $g\in\reals^\calX$. 
 Then
\[
\Expect_\pi[\hat F(X_1,\ldots,X_n)] = \sum_{\theta \in\Theta} \pi(\theta) \sum_{x \in \calX} g(x)P_\theta(x) = \pi^\top \Phi^\top g = \Iprod{\pi}{\Phi^\top g}
\]
and the bias is
\[
|\Expect_\pi[\hat F] - F(\pi)| \leq \|\pi\|_1\|\Phi^\top g-h\|_\infty = \|\Phi^\top g-h\|_\infty.
\]
For variance,
\[
\var_\pi(\hat F) = \frac{1}{n} \var_\pi(g(X)) \leq \frac{1}{n} \|g\|_\infty^2.
\]
This shows
\begin{equation}\label{eq:dy1}
	\sqrt{R_n} \le \sup_\pi \left(\EE_\pi [|\hat F - F(\pi)|^2]\right)^{1\over 2} \le \|\Phi^\top g-h\|_\infty +
\frac{1}{\sqrt{n}} \|g\|_\infty\,.
\end{equation}
We now optimize the right-hand side of~\eqref{eq:dy1} over all $g$ as follows:
	\begin{align}	
		& ~  \min_{g\in \mreals^\calX} \|\Phi^\top g - h\|_{\infty} + \frac{1}{\sqrt{n}} \|g\|_\infty
			\label{eq:dy2a}\\
	= & ~ \min_{g} \max_{\|y\|_1\leq 1,\|z\|_1\leq 1} \Iprod{y}{\Phi^\top g - h} + \frac{1}{\sqrt{n}} \Iprod{z}{g} 
			\label{eq:dy2}\\
	= & ~ \min_{g} \max_{\|y\|_1\leq 1,\|z\|_1\leq 1} \iprod{\Phi y + \frac{1}{\sqrt{n}} z }{g} - \iprod{y}{h} 
			\label{eq:dy3}\\
	= & ~  \max_{\|y\|_1\leq 1,\|z\|_1\leq 1} \min_{g } \iprod{\Phi y + \frac{1}{\sqrt{n}} z }{g} - \iprod{y}{h} 
			\label{eq:dy4}\\
	= & ~  \max_{\|y\|_1\leq 1, \|\Phi y\|_1\leq 1/\sqrt{n}} \iprod{y}{h} = \delta\left({1\over \sqrt{n}}\right),
			\label{eq:dy5}
	\end{align}
where in~\eqref{eq:dy2a} we write $\min$ since the optimization can clearly be restricted to a suitably large
$\ell_\infty$-box, \eqref{eq:dy2} is by the dual representation of norms,~\eqref{eq:dy3} is by linearity,~\eqref{eq:dy4}
is by von Neumann's minimax theorem for bilinear functions, and~\eqref{eq:dy5} follows since the inner minimization
in~\eqref{eq:dy4} is $-\infty$ unless $\Phi y + \frac{1}{\sqrt{n}} z = 0$. In fact, the equality of \prettyref{eq:dy2a} and \prettyref{eq:dy5} also follows from the strong duality of finite-dimensional LP.

To prove the left-hand bound in~\eqref{eq:dual}, we first introduce an auxiliary linear program:
	\begin{equation}
	\tilde{\delta}(t) \triangleq \max_\Delta \{\iprod{\Delta}{h}: \|\Delta\|_1\leq 1, \iprod{\Delta}{\ones}=0,
	\|\Phi \Delta\|_1\leq t\}\,,
	\label{eq:LP-Gt}
	\end{equation}
where $\Phi$ and $\Delta$ are as in~\eqref{eq:LP-G}. 

We recall the basics of the Le Cam's two-point method~\cite{Lecam86}. If there exist two priors on $\Theta$, under which the distributions of the samples are not perfectly distinguishable, i.e., of a small total variation, then the separation in the functional values constitutes a lower bound that holds for all estimators.
Specifically, for estimating linear functionals under the quadratic risk, we have \cite{Lecam86}
\begin{equation}
R_n \geq \frac{1}{8}  \max_{\pi,\pi'} \iprod{\pi-\pi'}{h}^2 (1-\TV((\Phi \pi)^{\otimes n},(\Phi \pi')^{\otimes n}),
\label{eq:lc-tv}
\end{equation}
where $\pi,\pi'$ are probability vectors on $\Theta$.
 Since  $\TV((\Phi \pi)^{\otimes n},(\Phi \pi')^{\otimes n}) \leq n \TV(\Phi \pi,\Phi \pi')$, we have
\begin{equation}\label{eq:dy7}
R_n \geq \frac{1}{16} \pth{ \max_{\pi,\pi'}\sth{\iprod{\pi-\pi'}{h}: \TV(\Phi \pi,\Phi \pi') \leq \frac{1}{2n}} }^2.
\end{equation}
Now, we observe that 
\begin{equation}\label{eq:dy6}
	\max_{\pi,\pi'}\sth{\iprod{\pi-\pi'}{h}: \TV(\Phi \pi,\Phi \pi') \leq \frac{1}{2n}} = 2 \tilde \delta \left({1\over
2n}\right)\,.
\end{equation}
Indeed, the optimizer $\Delta$ in~\eqref{eq:LP-Gt} can clearly be chosen so that $\|\Delta\|_1=1$. Next, decompose $\Delta =
\Delta_+ - \Delta_-$, where $\Delta_{\pm}(x) \eqdef \max(\pm\Delta(x), 0)$. From $\iprod{\Delta}{1}=0$ we conclude
that $\pi=2\Delta_+$ and $\pi'=2\Delta_-$ are valid probability distributions on $\Theta$ and, furthermore,
$$ \TV(\Phi \pi, \Phi \pi') = \|\Phi \Delta\|_1,\quad \Iprod{\pi - \pi'}{h} = 2\iprod{\Delta}{h}\,.$$
For the reverse direction, simply take $\Delta = {1\over2}(\pi-\pi')$.

Overall, from~\eqref{eq:dy7}-\eqref{eq:dy6} we get
$$ R_n \geq {1\over 4} \tilde\delta\left({1\over 2n}\right)^2\,.$$
To complete the proof the lower-bound in~\eqref{eq:dual} we invoke property 2 (to convert to $\delta(t)$) and property 1 (with $\lambda = 1/2$) from the following lemma (proved in \prettyref{app:aux}).
\end{proof}

\begin{lemma}[Properties of $\delta(t)$ and $\tilde\delta(t)$]
\label{lmm:aux-dual}
\begin{enumerate}
	\item For any $\lambda \in [0,1]$ we have 
		$$ \delta(\lambda t) \ge \lambda \delta(t), \qquad \tilde \delta(\lambda t) \ge \lambda \tilde
		\delta(t)\,.$$
	\item Assuming that there exist $\theta_{\pm}$ such that $h(\theta_+)\ge0\ge h(\theta_-)$ we have 
		\begin{equation}\label{eq:deltas}
			\tilde\delta(t) \le \delta(t) \le 2 \tilde\delta(t)\,.
		\end{equation}		
	\item For any non-constant $h$, there exists $C=C(h)>0$ such that\footnote{This property is not used for establishing \prettyref{thm:dual}. We prove it because it implies that $1/n$ (parametric rate) is always a lower bound in~\eqref{eq:dual}.} 
		$$ \tilde \delta(t) \geq C(h) t.$$
\end{enumerate}
\end{lemma}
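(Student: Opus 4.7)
The three properties can be established by direct manipulation of the linear programs, handled in order.

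\emph{Property 1.} The plan is simply to observe that if $\Delta$ is feasible for the LP defining $\delta(t)$, then $\lambda\Delta$ is feasible for $\delta(\lambda t)$, since both norm constraints scale down by $\lambda$, while the objective $\iprod{\lambda\Delta}{h} = \lambda\iprod{\Delta}{h}$ scales linearly. The same argument handles $\tilde\delta$ because scaling preserves the zero-sum constraint $\iprod{\Delta}{\ones}=0$.

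\emph{Property 2.} The inequality $\tilde\delta(t)\le\delta(t)$ is immediate from set inclusion. For the reverse bound $\delta(t)\le 2\tilde\delta(t)$, I would first establish the key fact that every $\Delta$ feasible for $\delta(t)$ is approximately zero-sum: $|\iprod{\Delta}{\ones}|\le t$. This uses column-stochasticity of $\Phi$ (so that $\Phi^\top\ones=\ones$) to write
\[
|\iprod{\Delta}{\ones}| = |\iprod{\Delta}{\Phi^\top\ones}| = |\iprod{\Phi\Delta}{\ones}| \le \|\Phi\Delta\|_1 \le t.
\]
Then, given an optimizer $\Delta^*$ with $s \eqdef \iprod{\Delta^*}{\ones}$, I would assume $s\ge 0$ (the case $s<0$ is symmetric via $\theta_+$) and consider the corrected vector $\Delta' = \Delta^* - s\,e_{\theta_-}$. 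Direct computation yields $\iprod{\Delta'}{\ones}=0$, $\|\Delta'\|_1\le 1+s\le 2$, $\|\Phi\Delta'\|_1\le t+s\le 2t$, and $\iprod{\Delta'}{h}=\delta(t)-s\,h(\theta_-)\ge \delta(t)$ since $h(\theta_-)\le 0$. Hence $\Delta'/2$ is feasible for $\tilde\delta(t)$ with objective value at least $\delta(t)/2$.

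\emph{Property 3.} Since $h$ is non-constant there exist $\theta_1,\theta_2$ with $h(\theta_1)>h(\theta_2)$. I would take the two-point perturbation $\Delta_\alpha = \alpha(e_{\theta_1}-e_{\theta_2})$, which is zero-sum with $\|\Delta_\alpha\|_1 = 2\alpha$ and $\|\Phi\Delta_\alpha\|_1 = \alpha\|P_{\theta_1}-P_{\theta_2}\|_1\le 2\alpha$. Choosing $\alpha = \min\{t/\|P_{\theta_1}-P_{\theta_2}\|_1,\,1/2\}$ (interpreted as $1/2$ when $P_{\theta_1}=P_{\theta_2}$) makes $\Delta_\alpha$ feasible for $\tilde\delta(t)$ with objective linear in $t$ on the small-$t$ range; combining the slope of this linear piece with monotonicity of $\tilde\delta$ (and the fact that both LPs are constant for $t\ge 1$) yields the claim with $C(h)$ equal to this slope.

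\emph{Main obstacle.} The only non-routine ingredient is the inequality $|\iprod{\Delta}{\ones}|\le t$ used in Property 2: it is what makes the zero-sum constraint essentially \emph{free} up to a factor of $2$. Without this, the compensating perturbation $\Delta^* - s\,e_{\theta_-}$ would inflate $\|\Phi\Delta'\|_1$ past what the factor-$2$ slack in the final rescaling $\Delta'/2$ can absorb, and the whole argument would collapse.
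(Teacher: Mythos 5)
Your proposal is correct and follows essentially the same route as the paper: Property 1 by rescaling a maximizer, Property 2 via the key observation $|\iprod{\Delta}{\ones}|=|\iprod{\Phi\Delta}{\ones}|\le\|\Phi\Delta\|_1\le t$ (using column-stochasticity) followed by a compensating perturbation toward $\theta_\mp$ and a factor-$2$ rescaling, and Property 3 via a scaled zero-sum two-point perturbation. The only cosmetic difference is that the paper corrects with a general distribution $\pi_\pm$ satisfying $\iprod{\pi_-}{h}\le 0\le\iprod{\pi_+}{h}$ rather than the point mass $e_{\theta_-}$, which changes nothing.
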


\begin{remark}
\label{rmk:highprob}	
	Although \prettyref{thm:dual} is in terms of the mean-square error, it is easy to obtain high-probability bound using standard concentration inequalities.
	Consider the estimator \prettyref{eq:hatF} with $g$ being the solution to the LP \prettyref{eq:LP-G} with $t=\frac{1}{\sqrt{n}}$. Since
	the bias satisfies $|\Expect[\hat F-F]| \leq  \delta(\frac{1}{\sqrt{n}})$ and the standard deviation is at most $\frac{1}{\sqrt{n}}\|g\|_\infty \leq  \delta(\frac{1}{\sqrt{n}})$, Hoeffding inequality implies the Gaussian concentration
	$\Prob[|\hat F-F| \geq t \delta(1/\sqrt{n})] \leq \exp(-c t^2)$ for all sufficiently large $t$ and some absolute constant $c$.
\end{remark}

\section{Lossy population recovery}
	\label{sec:bec}

Capitalizing on the general framework introduced in \prettyref{sec:general}, we prove the sample complexity bounds for lossy \pop announced in \prettyref{thm:bec}. 
The outline is the following:
\begin{enumerate}
	\item In \prettyref{sec:bec-horo} we obtain sharp bounds on the value of the LP \prettyref{eq:LP-G} in the infinite-dimensional case, with the restriction that the estimator is allowed to depend only on the number of ones in the erased samples (\prettyref{prop:mintv-bec}). In particular, the upper bound on the LP value leads to the sample complexity upper bound in \prettyref{thm:bec} for any dimension $d$.
	\item The lower bound is proved in \prettyref{sec:lp-bec} in two steps:
	(i) By resorting to the Hellinger distance, in \prettyref{lmm:bhel} we remove the square-root gap in the general \prettyref{thm:dual}. 
	(ii) Recall from \prettyref{rmk:type} that it is sufficient to consider estimators that are functions of the number of zeros and ones. To complete the proof, we show that the number of zeros provides negligible information for estimating the probability of the zero vector.	
	\end{enumerate}

\subsection{Solving the linear programming by $H^\infty$-relaxation}
\label{sec:bec-horo}

In this subsection we determine the value of the LP \prettyref{eq:LP-G} for lossy \pop where the estimator is restricted to be functions on the output Hamming weight. 
In view of \prettyref{eq:bec}, to apply \prettyref{thm:dual}, 
here $\Phi$ is the transition matrix from the input to the output Hamming weight:
\begin{equation}
\Phi_{ij} = 
P(w(Y)=i |w(X)=j )=
\begin{cases}
\binom{j}{i} (1-\epsilon)^i \epsilon^{j-i}  & i \leq j\\
 0 & i > j\\
\end{cases}
\label{eq:Phi-bec}
\end{equation}
 and the linear functional corresponds to $h=e_0$. In other words, \prettyref{eq:LP-G} reduces to 
\begin{equation}
\delta(t) = \sup_\Delta \sth{\Delta_0: \|\Delta\|_1\leq 1, \left\|\sum_{j\geq 0} \Delta_j \mathrm{Bin}(j,1-\epsilon)\right\|_1\leq t}.
\label{eq:LP-G1}
\end{equation}

For notational convenience, let us define the following ``min-TV'' LP that is equivalent to~\eqref{eq:LP-G1}:
\begin{equation}
t(\delta) \eqdef \inf_\Delta\left\{\left\|\sum_{j\geq 0} \Delta_j \mathrm{Bin}(j,1-\epsilon)\right\|_1: \Delta_0 \ge \delta, \|\Delta\|_1 \le 1
\right\}\,.
\label{eq:mdelta}
\end{equation} 
Clearly we have $\delta(t(\delta))=\delta$. 

\begin{prop} 
\label{prop:mintv-bec}
If $\epsilon \leq \frac{1}{2}$, then
\begin{equation}
\delta \leq t(\delta) \leq 2(1-\epsilon) \delta.
\label{eq:mintv-bec1}
\end{equation}
If $\epsilon > \frac{1}{2}$, then
\begin{equation}
\delta^{\frac{\epsilon}{1-\epsilon}}  \leq t(\delta) \leq 
\pth{e^2 \delta_1 \log \frac{1}{\delta_1} }^{\frac{\epsilon}{1-\epsilon}}
\label{eq:mintv-bec2}
\end{equation}
where the right inequality holds provided that $\delta_1 \eqdef {\delta \over 1-\epsilon} < e^{-1}$.
\end{prop}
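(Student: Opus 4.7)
My approach is to recast the min-TV LP in function-theoretic language and then treat the two regimes separately: the easy one by $\ell^\infty$ duality, the hard one by complex-analytic maximum principles. To a signed sequence $\Delta=(\Delta_j)_{j\ge 0}$ I associate the generating function $f(z)=\sum_{j\ge 0}\Delta_j z^j$, so that $\Delta_0=f(0)$ and $\|\Delta\|_1=\|f\|_A$ is the Wiener algebra norm (the $\ell^1$ norm of Taylor coefficients). Since the probability generating function of $\Binom(j,1-\epsilon)$ is $(\epsilon+(1-\epsilon)z)^j$, the mixture $\sum_j\Delta_j\Binom(j,1-\epsilon)$ has generating function $G(z)=f(T(z))$ where $T(z)\eqdef\epsilon+(1-\epsilon)z$, and so $\|\sum_j\Delta_j\Binom(j,1-\epsilon)\|_1=\|G\|_A$. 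The LP becomes
\[
t(\delta)=\inf\{\|G\|_A : \|f\|_A\le 1,\ f(0)\ge\delta\},
\]
with $T$ mapping $\overline{\mathbb{D}}$ conformally onto $K\eqdef\overline{D(\epsilon,1-\epsilon)}\subset\overline{\mathbb{D}}$, internally tangent to $\partial\mathbb{D}$ at $z=1$.

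\textbf{Low-erasure regime.} For $\epsilon\le 1/2$ the lower bound follows by testing against the dual sequence $g_k=(-\epsilon/(1-\epsilon))^k$, which has $\|g\|_\infty=1$ exactly in this regime. Because $\EE[g(\Binom(j,1-\epsilon))]=((1-\epsilon)(-\epsilon/(1-\epsilon))+\epsilon)^j=0^j=\mathbf{1}\{j=0\}$, one gets $\Iprod{g}{\Phi\Delta}=\Delta_0\ge\delta$, so $\|\Phi\Delta\|_1\ge\delta$. The matching upper bound is immediate: $\Delta=\delta e_0$ gives $\|\Phi\Delta\|_1=\delta$, which is a fortiori $\le 2(1-\epsilon)\delta$ since $2(1-\epsilon)\ge 1$.

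\textbf{High-erasure lower bound.} For $\epsilon>1/2$ the above $g$ is unbounded in $\ell^\infty$, so I switch to a harmonic-measure argument. Noting $\|G\|_A\ge\|G\|_{L^\infty(\mathbb{D})}=\sup_{w\in\partial K}|f(w)|\eqdef M$, it suffices to lower-bound $M$. Since $\epsilon>1/2$ implies $0\notin K$, the subharmonic function $\log|f|$ lives on the lune $\Omega\eqdef\mathbb{D}\setminus K$, is bounded by $0$ on $\partial\mathbb{D}$ (because $\|f\|_A\le 1$ forces $|f|\le 1$ on $\overline{\mathbb{D}}$) and by $\log M$ on $\partial K$. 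The two-constants theorem then yields $\log\delta\le\omega_\Omega(0,\partial K)\cdot\log M$. I compute the harmonic measure by the Möbius change of coordinates $\phi(z)=(1+z)/(1-z)$, which sends $\mathbb{D}$ to the right half-plane, $\partial\mathbb{D}$ to the imaginary axis, $0\mapsto 1$, and (because $\partial K$ is a circle through the pole $z=1$) $\partial K$ to the vertical line $\mathrm{Re}(w)=L$ with $L\eqdef\epsilon/(1-\epsilon)$. Thus $\Omega$ becomes the strip $\{0<\mathrm{Re}(w)<L\}$, and the harmonic measure of its right edge from $w=1$ equals $1/L=(1-\epsilon)/\epsilon$. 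Substituting gives $M\ge\delta^{\epsilon/(1-\epsilon)}$, hence $t(\delta)\ge\delta^{\epsilon/(1-\epsilon)}$.

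\textbf{High-erasure upper bound and main obstacle.} The lower bound above is saturated in $H^\infty$ by the extremal $f^\star(z)=\exp((\log\delta)\phi(z)/L)$: it has $|f^\star|\equiv 1$ on $\partial\mathbb{D}$, $|f^\star|\equiv\delta^{\epsilon/(1-\epsilon)}$ on $\partial K$, and $f^\star(0)=\delta$. Since $f^\star$ carries an essential singularity at $z=1$ and generically lies outside the Wiener algebra, I next build a polynomial surrogate of degree $d\asymp\log(1/\delta)$, either by Taylor truncation of a mollified $f^\star$ or directly via a truncated alternating geometric sequence $\Delta_j=C(-\alpha)^j\mathbf{1}\{j\le d\}$ with $\alpha=1-\Theta(1/d)$ and $C=\Theta(\delta)$. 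For such polynomial $f$ one bounds $\|G\|_A\le(d+1)\|G\|_{L^\infty(\mathbb{D})}$ together with a direct estimate of $\|f\|_{L^\infty(K)}$ approaching $\delta^{\epsilon/(1-\epsilon)}$, and careful bookkeeping yields the claimed $(e\delta\log(1/\delta))^{\epsilon/(1-\epsilon)}$. The technically delicate point, and the main obstacle, is precisely this polynomial approximation: one must simultaneously maintain $\|f\|_A\le 1$ and $f(0)=\delta$ while forcing $\|f\|_{L^\infty(K)}$ to within a $\polylog(1/\delta)$ factor of the harmonic-measure bound, and handling the near-singularity of $\phi$ at $z=1$ is where the complex-analytic care is concentrated.
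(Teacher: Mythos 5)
Your reformulation in the Wiener algebra, the low-erasure regime, and the high-erasure \emph{lower} bound are all sound. The dual certificate $g_k=(-\epsilon/\bar\epsilon)^k$ is just the evaluation functional $G\mapsto G(-\epsilon/\bar\epsilon)$ at a point of the closed unit disk (which is exactly why it only works for $\epsilon\le 1/2$), and your two-constants/harmonic-measure argument on the lune $D\setminus D_{\bar\epsilon}$ is the conformal-invariance phrasing of the Hadamard three-lines inequality that the paper applies to the horocircles $\partial D$, $\partial D_{1/2}\ni 0$ and $\partial D_{\bar\epsilon}$; the two are equivalent and both yield $t(\delta)\ge\delta^{\epsilon/\bar\epsilon}$. (Your $\Delta=\delta e_0$ even gives the sharper upper bound $t(\delta)\le\delta$ in the low-erasure case.)

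The high-erasure \emph{upper} bound, however, has a genuine gap: you explicitly defer the construction as ``the main obstacle,'' and neither route you sketch works as stated. The truncated alternating geometric $\Delta_j=C(-\alpha)^j\mathbf{1}\{j\le d\}$ fails outright: its generating function satisfies $f(1)=C\sum_{j\le d}(-\alpha)^j\approx C/(1+\alpha)=\Theta(\delta)$, and since $1\in\partial D_{\bar\epsilon}$ this forces $\|\Phi f\|_{A}\ge \|\Phi f\|_{H^\infty(D)}\ge \Theta(\delta)\gg\delta^{\epsilon/\bar\epsilon}$, so no bookkeeping can recover the claimed bound from it. For the truncation route, the degree must be of order $\log^2\frac{1}{\delta}$, not $\log\frac{1}{\delta}$ (with $1-\alpha\asymp 1/\log\frac{1}{\delta}$ the Taylor tail $\alpha^{d}/(1-\alpha)$ only drops below $\delta^{\epsilon/\bar\epsilon}$ once $d\gtrsim\frac{\epsilon}{\bar\epsilon}\log^2\frac{1}{\delta}$), and the crude estimate $\|G\|_{A}\le (d+1)\|G\|_{H^\infty(D)}$ then costs an extra $\log^2\frac{1}{\delta}$ factor that the stated bound $(e\delta\log\frac{1}{\delta})^{\epsilon/(1-\epsilon)}$ does not permit. (Also, your extremal should be $\exp((\log\delta)\phi(z))$ rather than $\exp((\log\delta)\phi(z)/L)$ if it is to satisfy $f^\star(0)=\delta$ and have modulus $\delta^{L}$ on $\partial K$.) The missing idea is that no truncation is needed at all, since the LP is over infinite sequences, and that the $A$-norm should be controlled via $\|f\|_{A}\le\frac{r}{r-1}\|f\|_{H^\infty(rD)}$ for $r>1$ rather than via the degree: taking $g(z)=\beta^{(1+z)/(1-z)}$ with $\beta=\delta/(1-\alpha)$ and $f_\alpha(z)=(1-\alpha)g(\alpha z)$, the choice $r=1/\alpha$ gives $\|f_\alpha\|_{A}\le\|g\|_{H^\infty(D)}=1$, the choice $r=(1-\alpha\epsilon)/(\alpha\bar\epsilon)$ gives $\|\Phi f_\alpha\|_{A}\le(1-\alpha\epsilon)\,\beta^{(1-\eta)/\eta}$ with $\eta=1-\alpha\epsilon$, and $\alpha=1-1/\log\frac{1}{\delta}$ then produces $(e\delta\log\frac{1}{\delta})^{\epsilon/(1-\epsilon)}$ with no lost polylogarithmic factors.
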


\begin{remark}
\label{rmk:Ggamma-bec}	
Thanks to \prettyref{thm:dual}, the lower bound on $t(\delta)$ in \prettyref{prop:mintv-bec} immediately translates into the following upper bound on the MSE of estimating $P_0$ in any dimension:
\[
\sup_{d \geq 1} R^*(n,d) \leq \delta(1/\sqrt{n})^2 \leq n^{- \min\sth{\frac{1-\epsilon}{\epsilon},1}},
\]
without any hidden constants. 
The upper bound on $t(\delta)$ require additional work to yield matching minimax lower bounds; this is done in \prettyref{sec:lp-bec}.
\end{remark}

\begin{proof}[Proof of \prettyref{prop:mintv-bec}]
Let $D$ be the open unit disk and $\bar D$ the closed unit disk on the complex plane. For analytic
functions on $D$ we introduce two norms:
	$$ \|f\|_{H^\infty(D)} \triangleq \sup_{z\in D} |f(z)|, \qquad \|f\|_{A} \triangleq \sum_{n\ge 0} |a_n|\,,$$
	where $a_n$ are the Taylor coefficients of $f(z) = \sum_n a_n z^n$. Functions with bounded $A$-norm form a space
	known as the Wiener algebra (under multiplication). It is clear that every $A$-function is also continuous on
	$\bar D$. Furthermore, by the maximal modulus principle,
	any function continuous on $\bar D$ and analytic on $D$ satisfies:
	\begin{equation}
	\|f\|_{H^\infty(D)} = \sup_{z \in \partial D} |f(z)|\,,
	\label{eq:maxmod}
	\end{equation}	
	and therefore:
		\begin{equation}\label{eq:bd1}
			\|f\|_{H^\infty} \le \|f\|_{A}\,.
\end{equation}		
	In general, there is no estimate in the opposite direction;\footnote{That is, the space $A$ is a strict subset of
	$H^\infty(D)$. This is easiest to see by noticing that all functions in $A$ are continuous on the unit
	circle, while $f(z) = {1\over (1-z)^i}$ is in $H^\infty$ but discontinuous on the unit circle. Another 
	example of a function in $H^\infty$ but not in $A$ is
	$f(z)=\exp({1+z\over 1-z})$, but it takes effort to show it is not in $A$. Also note that the linear
	functional $f \mapsto f(1)$ defined on polynomials is bounded in both $A$-norm and $H^\infty(D)$ norm.
	However, in $A$ it admits a unique extension to all of $A$, while in $H^\infty(D)$ there are different
	incompatible extensions (existence is from Hahn-Banach). }
	nevertheless, we can estimate the $A$-norm using the $H^\infty$-norm over a larger domain: Suppose $f\in H^\infty(rD)$ for $r>1$, then 
	\begin{equation}\label{eq:bd2}
		\|f\|_{A} \le \frac{r}{r-1} \|f\|_{H^\infty(rD)}\,.
	\end{equation}	
	Indeed, from Cauchy's integral formula we estimate coefficients $a_n$ as
		\begin{equation}
		 |a_n| \le r^{-n} \|f\|_{H^\infty(rD)}
		\label{eq:cauchy}
		\end{equation}
	and then sum over $n\ge0$ to get~\eqref{eq:bd2}.

	Now, to every sequence $\{\Delta_j: j\ge 0\}$ with finite $\ell_1$-norm, we associate a function in $A$ as
	$$ f(z) \eqdef \sum_{j\ge 0} \Delta_j z^j\,,$$
	which, in case when $\Delta$ is a probability distribution, corresponds its generating function.
	Note that the channel $\Phi$ maps the distribution of the input Hamming weight $w(X)$ into that of the output Hamming weight $w(Y)$ via a linear transformation.
	Equivalently, we can describe the action $\Phi$ on the function $f$ in terms how input generating functions are mapped to that of the output as follows:
\[
\Expect[z^{w(X)}] \overset{\Phi}{\mapsto}  \Expect[z^{w(Y)}]=\Expect[z^{\Binom(w(X),\bar\epsilon)}] = \Expect[(\epsilon z+ \bar\epsilon)^X]
\]
that is, $f(z) \mapsto f(\epsilon z+ \bar\epsilon)$. To this end, define the following operator, also denoted by $\Phi$, as
			\begin{equation}
			(\Phi f) (z) \eqdef f(\epsilon + \bar\epsilon z)\,,
			\label{eq:bec-op}
			\end{equation}
	which is known as a composition operator on a unit disk.\footnote{Despite the simple definition, characterizing properties of such
	operators is a rather difficult task cf.~\cite{cowen1988linear}.} Therefore, the channel maps $\Delta$ linearly into another $\ell_1$-sequence $\Delta' = \sum_j \Delta_j
	\mathrm{Bin}(j,\bar{\epsilon})$, which are exactly the coefficients of $\Phi f$.
		Indeed, $\Delta_j' = \sum_{k\geq j} \Delta_k \binom{k}{j} \bar\epsilon^j \epsilon^{k-j}$ and 
		\[
		\sum_{j \geq 0} \Delta_j' z^j = \sum_{k \geq 0} \Delta_k \sum_{j=0}^k \binom{k}{j} (\bar{\epsilon} z)^j \epsilon^{k-j} = \sum_{k \geq 0} \Delta_k (\bar{\epsilon} z+\epsilon)^k = (\Phi f)(z).
		\]
	Then, with the above identification, the LP \prettyref{eq:mdelta} can be recast as
	\begin{equation}\label{eq:mdef_2}
		t(\delta) = \inf \{\|\Phi f\|_{A}: \|f\|_{A} \le 1, f(0)\ge\delta\}\,.
\end{equation}	
	From \prettyref{eq:bec-op} an important observation is that $\Phi f$ restricts $f$ to a \textit{horodisk} 
	\begin{equation}\label{eq:br4}
		D_\tau  \eqdef \bar\tau+ \tau D = \{z \in \mathbb{C}: |z - \bar\tau| \leq \tau\}
\end{equation}	
which shrinks as $\tau$ decreases from 1 to 0 (see \prettyref{fig:horo}). Thus, $\|\Phi f\|_{H^\infty(D)} = \sup_{z\in D_{\bar{\epsilon}}} |f(z)|$.

\begin{figure}[ht]%
\centering
\begin{tikzpicture}[scale=2,font=\scriptsize,>=latex']
\draw[blue,thick] (0,0) circle (1);
\draw[red,thick] (1/2,0) circle (1/2);
\draw[thick] (1/3,0) circle (2/3);
\node at (0.8,0.95) {$D=D_1$};
\node at (-0.2,0.65) {$D_{2/3}$};
\node at (0.5,0.3) {$D_{1/2}$};
\draw[-latex] (-1.4,0) -- (1.4,0);
\draw[-latex] (0,-1.4) -- (0,1.4);
\end{tikzpicture}    
\caption{Horodisks.}%
\label{fig:horo}%
\end{figure}

In view of~\eqref{eq:bd1}, it is clear that $t(\delta)$ is lower bounded by
	\begin{equation}\label{eq:br3}
		t_1(\delta) \eqdef \inf \{\|\Phi f\|_{H^\infty(D)}: \|f\|_{H^\infty(D)} \le 1, f(0)\ge\delta\}\,,
	\end{equation}	
	Furthermore, if $\epsilon \leq 1/2$ then $0\in \bar D_{\bar{\epsilon}}$ and thus $t_1(\delta) \ge \delta$.
	This proves the lower bound in \prettyref{eq:mintv-bec1}. To show the upper bound, Take $f(z) = \delta(1-z)$, which is feasible since $f(0)=\delta$ and $\|f\|_{A} = 2 \delta < 1$. Clearly, $(\Phi f)(z) = \delta\bar{\epsilon}(1-z)$ which gives $t(\delta) \le 2 \bar{\epsilon} \delta$.

	In the remainder of the proof we focus on the non-trivial case of $\epsilon > 1/2$.
	Note that the M\"obius transform $z \mapsto {z-1\over z+1}$ maps the right half-plane onto the $D$ so that: a) the imaginary axis gets mapped to the unit circle $\partial D$; b) the line $1 + i \mreals$ gets mapped to the horocircle $\partial D_{1/ 2}$ that
	passes through $0$; c) the line ${\epsilon\over \bar{\epsilon}} + i \mreals$ gets mapped to the horocircle $\partial D_{\bar{\epsilon}}$. Then by
	Hadamard's three-lines theorem (see, e.g., \cite[Theorem 12.3]{simon2011convexity}) we have that for any function $f \in H^\infty(D)$:
		\begin{equation}
		\sup_{z\in D_{1/2}} |f(z)| \le \left( \sup_{z\in D} |f(z)| \right)^{1-2\bar{\epsilon}\over \epsilon} 
							\left( \sup_{z\in D_{\bar{\epsilon}}} |f(z)| \right)^{\bar{\epsilon} \over \epsilon}\,.
		\label{eq:threehoro}
		\end{equation}							
	Since any feasible solution $f$ to \eqref{eq:br3} has $f(0) \ge \delta$ and $\|f\|_{H^\infty(D)} \leq 1$,  we conclude that 
		$$ t(\delta) \ge t_1(\delta) \ge \delta^{\epsilon\over \bar{\epsilon}}\,,$$
proving the lower bound in \prettyref{eq:mintv-bec2}. 

To show the upper bound, we demonstrate an explicit feasible solution for \prettyref{eq:mdef_2}. 
Choose $\alpha < 1$ such that 
\begin{equation}
\beta \triangleq \frac{\delta}{1-\alpha} \leq 1.
\label{eq:beta1}
\end{equation}
The main idea is to choose the function so that the comparison inequality \prettyref{eq:threehoro} is tight. To this end, recall that Hadamard three-lines theorem holds with equality for exponential function. 
This motivates us to consider the following mother function 
		$$ g(z) \eqdef \beta^{1+z\over 1-z}\,.$$
	Note that for any $\eta\in\reals$, for the horodisk $D_\eta$ defined in~\eqref{eq:br4}, 
	$z \mapsto \frac{1+z}{1-z}$ maps the horocircle $\partial D_\eta$ back to the straight line ${\bar{\eta}\over \eta} + i \reals$. Since $0 \leq \beta \leq 1$, 
		\begin{equation}
		\| g \|_{H^\infty(D_\eta)} 
		=  \beta^{\inf_{z\in \partial D_\eta} \text{Re}(\frac{1+z}{1-z})} 
		= \beta^{\bar \eta \over \eta}\,.
		\label{eq:gDeta}
		\end{equation}
			In particular, since $D=D_1$, by setting $\eta=1$ we get $\|g\|_{H^\infty(D)} = 1$.
	Next, for $\alpha \in (0,1)$ define the scaled function
		\begin{equation}
		f_\alpha(z) \eqdef (1-\alpha) g(\alpha z)\,,
		\label{eq:scale}
		\end{equation}		
		which is a feasible solution to \prettyref{eq:mdef_2}. Indeed, $f_\alpha(0) = (1-\alpha) g(0) = (1-\alpha)\beta = \delta$, by definition. 
	Furthermore, invoking the estimate~\eqref{eq:bd2} with $r=1/\alpha$, we have
		\begin{equation}
		\| f_\alpha \|_{A} \leq \frac{1}{1-\alpha} \| f_\alpha \|_{H^\infty(D/\alpha)} = \| g \|_{H^\infty(D)} = 1 \,.
		\label{eq:feas1}
		\end{equation}		
	Setting $\eta = 1-\alpha {\epsilon}$ and $r = \frac{\eta}{\alpha\bar{\epsilon}} > 1$ since $\alpha < 1$, we have again from the reverse estimate~\eqref{eq:bd2}	
		\begin{equation}
		\| \Phi f_\alpha \|_{A} \leq \frac{1}{1-\frac{1}{r}} \| \Phi f_\alpha \|_{H^\infty(rD)}
		= \frac{1-\alpha}{1-\frac{1}{r}} \| g \|_{H^\infty(\alpha \epsilon+\alpha\bar{\epsilon} rD)}
		= (1-\alpha\epsilon) \| g \|_{H^\infty(D_\eta)}.
		\label{eq:feas2}
		\end{equation}
By		\prettyref{eq:gDeta} we have
\begin{align}
\| \Phi f_\alpha \|_{A}
\leq & ~ (1-\alpha\epsilon) \beta^{\bar \eta \over 	\eta} \le
\exp\pth{\frac{\alpha\epsilon}{1-\alpha\epsilon} \log \frac{\delta}{1-\alpha}} \eqdef \exp(\epsilon G/(1-\epsilon))
\label{eq:feas3}
\end{align}
Denote $\bar \alpha = 1-\alpha$ and choose $\bar \alpha = {\bar \epsilon \over -\log
\delta_1}$. Note that constraint~\eqref{eq:beta1} corresponds to $\delta_1  \log \delta_1 \ge -1$ which is automatically
satisfied for any $\delta_1>0$.
\apxonly{\par 
Generally, the optimal choice of $\bar\alpha$ to minimize~\eqref{eq:feas3} is
	$ \bar \alpha = e \delta x^*$ where $x^* \log x^* = 1/(e\delta_1)$. For $\delta_1 > e^{-3}$ this yields the
	bound $t(\delta) \lesssim \delta \exp\{-(2e^3 \delta)^{-1}\}$. In the interesting regime of $\bar \epsilon =
	{c\over \log n}$ this only results in the lower bound on risk $R^*(n,\bar \epsilon) \gtrsim {1\over \log n}$,
	whereas the optimal bound is $\Omega(1)$.

	\par}
Apply the simple inequality
$ {1 \over 1-\alpha \epsilon} 
\ge {1\over \bar \epsilon} \left(1 - {\bar\alpha \over \bar\epsilon }\right)$ and observe that for $\delta_1 \le e^{-1}$
the term in parentheses is non-negative. Thus, we get 
$$ {\alpha \epsilon \over 1-\alpha \epsilon} \ge {\epsilon \over \bar \epsilon}\left(1 - {\bar\alpha \over
\bar\epsilon}\right) (1-\bar \alpha) \ge {\epsilon \over \bar \epsilon} \left(1 - {\bar\alpha \over
\bar\epsilon}\right)^2 \ge  {\epsilon \over \bar \epsilon} \left(1 - 2 {\bar\alpha \over
\bar\epsilon}\right)\,. $$
From here we have
\begin{equation}\label{eq:feas4}
	G = \frac{\alpha \bar \epsilon}{1-\alpha\epsilon} \log \frac{\delta}{\bar\alpha} \le \log\pth{\delta_1 \log {1\over \delta_1}} \pth{1-{2\over \log {1\over \delta_1}}} \le \log\pth{\delta_1 \log {1\over
\delta_1}} + 2 
\end{equation}
since $\log (\delta_1 \log  {1\over \delta_1}) < 0$ and 
$\log \log {1\over \delta_1}\ge 0$. Plugging the estimate of $G$ 
into~\eqref{eq:feas3} we obtain~\eqref{eq:mintv-bec2}.
\end{proof}

\begin{remark}
\label{rmk:ms13}	We compare the proof techniques of the lower bound part of \prettyref{prop:mintv-bec} with that
of \cite{MS13}.  The sample complexity bound \prettyref{eq:ms13} is also obtained by bounding the value of the dual LP
from above via the $H^\infty$-norm relaxation. The suboptimality of the exponent
$\frac{1}{1-\epsilon}\log\frac{2}{1-\epsilon}$ in \prettyref{eq:ms13} seems to stem from the application of the Hadamard
three-circle theorem, which is applicable to three concentric circles centered at the origin. In comparison, the sharp
result in \prettyref{prop:mintv-bec} is obtained by comparing the value of any feasible $f$ on three horocircles (see
\prettyref{fig:horo}), which are images of three horizontal lines under the M\"obius transform and Hadamard three-lines
theorem is readily applicable yielding the optimal exponent $\frac{\epsilon}{1-\epsilon}$.  The upper bound part of
\prettyref{prop:mintv-bec} is new. 
\end{remark}

\apxonly{
The following is the truncated version of \prettyref{prop:mintv-bec}.
\begin{lemma}\label{lem:horo} 
Fix $\epsilon > {1\over 2}$. For $d \in \naturals$, define the finite-dimensional version of \prettyref{eq:mdelta}:
\begin{align}
 t(\delta,d) \eqdef
 & ~ \inf\left\{\left\|\sum_{j=0}^d \Delta_j \mathrm{Bin}(j,1-\epsilon)\right\|_1: \Delta\in\reals^{d+1}, \Delta_0 \ge \delta, \|\Delta\|_1 \le 1
\right\} \nonumber \\
= & ~ \inf \{\|\Phi f\|_{A}: \|f\|_{A} \le 1, f(0)\ge\delta, \deg f \leq d\}. \label{eq:mdelta-d}
\end{align}
Then for any $\delta < 1/e$, there exists $d_0\in\naturals$ such that 
\begin{equation}
d_0 \le \frac{1}{1-\epsilon}  \log^2 {1\over \delta}
\label{eq:d0}
\end{equation}
and
\begin{equation}
t(\delta/2,d_0) \leq \pth{e \delta \log \frac{1}{\delta} }^{\frac{\epsilon}{1-\epsilon}}
\label{eq:horo}
\end{equation}
\end{lemma}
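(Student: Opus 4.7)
The plan is to truncate the explicit feasible function $f_\alpha$ constructed in the proof of \prettyref{prop:mintv-bec} at polynomial degree $d_0 = \Theta(\log^2(1/\delta)/(1-\epsilon))$ and show that the tail contribution is negligible.

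Take $\alpha = 1 - 1/\log(1/\delta)$ and $\beta = \delta\log(1/\delta) < 1/e$ (using $\delta < 1/e$), and form $f_\alpha(z) = (1-\alpha)\beta^{(1+\alpha z)/(1-\alpha z)}$ as in \prettyref{eq:scale}. The proof of \prettyref{prop:mintv-bec} shows that $f_\alpha(0) = \delta$, $\|f_\alpha\|_{A} \leq 1$, and $\|\Phi f_\alpha\|_{A} \leq (e\delta\log(1/\delta))^{\epsilon/(1-\epsilon)}$, the last with some implicit slack absorbed into the final $\leq$. Define $\tilde f(z) \eqdef \sum_{n=0}^{d_0} a_n z^n$ with $a_n \eqdef [z^n] f_\alpha(z)$. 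Truncation preserves the constant term, so $\tilde f(0) = a_0 = (1-\alpha)\beta = \delta \geq \delta/2$; trivially $\|\tilde f\|_{A} \leq \|f_\alpha\|_{A} \leq 1$; hence $\tilde f$ satisfies the feasibility constraints of \prettyref{eq:mdelta-d} at level $\delta/2$.

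To control the objective $\|\Phi \tilde f\|_{A}$, I would invoke two facts. First, the operator $\Phi$ is a contraction in the $A$-norm, since it maps each monomial $z^n$ to the probability generating function $(\epsilon + \bar\epsilon z)^n$, whose $A$-norm is $(\epsilon+\bar\epsilon)^n=1$, so $\|\Phi h\|_A \leq \|h\|_A$ for every $h \in A$. Second, Cauchy's coefficient bound \prettyref{eq:cauchy} applied on $rD$ with $r = 1/\alpha$ yields $|a_n| \leq \alpha^n \|f_\alpha\|_{H^\infty(D/\alpha)} = (1-\alpha)\alpha^n$, whence $\|f_\alpha - \tilde f\|_{A} \leq \sum_{n>d_0}(1-\alpha)\alpha^n = \alpha^{d_0+1}$. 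Combining,
\[
\|\Phi \tilde f\|_{A} \leq \|\Phi f_\alpha\|_{A} + \alpha^{d_0+1}.
\]

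Setting $d_0 = \lceil\log^2(1/\delta)/(1-\epsilon)\rceil$ gives $\alpha^{d_0+1} \leq \exp(-(d_0+1)/\log(1/\delta)) \leq \delta^{1/(1-\epsilon)}$, a factor of $\delta$ (up to polylog) smaller than the main term $\delta^{\epsilon/(1-\epsilon)}$, hence a lower-order correction. The main obstacle is fitting this correction inside the stated target $(e\delta\log(1/\delta))^{\epsilon/(1-\epsilon)}$ without exceeding it; the implicit multiplicative slack of order $\exp(-\tfrac{\epsilon}{1-\epsilon}\log\log(1/\delta)/\log(1/\delta))$ in the bound on $\|\Phi f_\alpha\|_{A}$ (traceable in the computation leading to \prettyref{eq:feas3}) suffices for $\delta$ sufficiently small, while the factor $2$ built into the constraint $\tilde f(0) \geq \delta/2$ absorbs any remaining deficit in the moderate-$\delta$ regime (equivalently, one may run the same construction with target $\delta/2$ instead of $\delta$ and use that $(1/2)^{\epsilon/(1-\epsilon)}<1/2$ for $\epsilon>1/2$). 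In either regime the bound $d_0 \leq \log^2(1/\delta)/(1-\epsilon)$ in the statement is preserved.
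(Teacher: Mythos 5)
Your construction is the same as the paper's: truncate the feasible function from the proof of \prettyref{prop:mintv-bec} at degree $d_0\asymp \frac{1}{1-\epsilon}\log^2\frac{1}{\delta}$ and control the tail. Your handling of the tail is in fact slightly cleaner than the paper's (the observation that $\|\Phi h\|_A\le\|h\|_A$ because $\Phi$ sends $z^n$ to $(\epsilon+\bar\epsilon z)^n$, whose $A$-norm is $1$, combined with Cauchy on the disk of radius $1/\alpha$ giving $\|f_\alpha-\tilde f\|_A\le\alpha^{d_0+1}\le\delta^{1/(1-\epsilon)}$). The gap is in the final accounting. As written you keep $\tilde f$ at full scale ($\tilde f(0)=\delta$) and try to hide the additive error $\delta^{1/(1-\epsilon)}$ in the slack of the bound \prettyref{eq:feas3}; but that slack equals $T\bigl(1-\exp\{-\tfrac{\epsilon}{1-\epsilon}\tfrac{\log\log(1/\delta)}{\log(1/\delta)}\}\bigr)$ with $T$ the target, which tends to $0$ as $\delta\uparrow 1/e$, while $\delta^{1/(1-\epsilon)}/T$ stays bounded away from $0$ there; so this route fails in the moderate-$\delta$ regime. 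Your fallback --- rerunning the construction with target $\delta/2$, i.e.\ with $\alpha'=1-1/\log(2/\delta)$ --- enlarges the truncation error to $(\alpha')^{d_0+1}$ for the same $d_0$, and you do not verify that the gain $\bigl(\tfrac{\log(2/\delta)}{2\log(1/\delta)}\bigr)^{\epsilon/(1-\epsilon)}$ beats this loss; near $\delta=1/e$ the two are of comparable size, so the claim is not established as stated.

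The factor of $2$ in the statement is meant to be deployed differently, and this is exactly what the paper's proof does: take $\tfrac12\tilde f$ as the feasible point. Then $\tfrac12\tilde f(0)=\delta/2$, $\|\tfrac12\tilde f\|_A\le\tfrac12\le1$, and $\|\Phi(\tfrac12\tilde f)\|_A\le\tfrac12\bigl(\|\Phi f_\alpha\|_A+\alpha^{d_0+1}\bigr)$. One then only needs the truncation error to be at most the \emph{main term's bound}, rather than at most the slack: indeed $\alpha^{d_0+1}\le\delta^{1/(1-\epsilon)}=\delta\cdot\delta^{\epsilon/(1-\epsilon)}\le\bigl(e\delta\log\tfrac{1}{\delta}\bigr)^{\epsilon/(1-\epsilon)}$, so half the sum is at most the target. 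With that one-line change (and taking $d_0=\lfloor\frac{1}{1-\epsilon}\log^2\frac{1}{\delta}\rfloor$ so that \prettyref{eq:d0} holds literally, rather than the ceiling) your argument closes and coincides with the paper's.
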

\begin{proof}
Let $\beta = \delta/(1-\alpha)$ with $\alpha \in (0,1)$ to be specified.
	Let $g(z) = \beta^{1+z\over 1-z}$ as in the proof of \prettyref{prop:mintv-bec}, whose Taylor expansion is given by $g(z) = \sum_{n\ge 0} a_n z^n$ . 
		Let 	$\tilde g(z) = \sum_{0 \le n \le d_0} a_n z^n$. 
	The proof follows the same program as that of \prettyref{prop:mintv-bec}, with $\tilde g$ replacing $g$.
	 Similar to \prettyref{eq:scale}, define 
	\[
	\tilde f_\alpha(z) \eqdef \frac{1}{2} (1-\alpha) \tilde g(\alpha z).
	\]
	Then we have
	$\tilde f_\alpha(0)=\delta/2$, since $\tilde g(0) = g(0)=\beta$.
	Furthermore, by \prettyref{eq:gDeta}, $\| g\|_{H^\infty(D_\eta)}\leq \beta^{(1-\eta)/\eta}$  and  in particular $\|g\|_{H^\infty(D)} = 1$.
	By Cauchy's inequality \prettyref{eq:cauchy}, the Taylor series coefficients of $g$ satisfy $|a_n| \leq \|g\|_{H^\infty(D)} = 1$ and hence	
\begin{equation}\label{eq:dbl5}
			\|\tilde g(\alpha z) - g(\alpha z)\|_{A} \le {\alpha^{d_0+1}\over 1-\alpha} \eqdef \kappa\,.
\end{equation}		
	We later will choose $d_0$ so that $\kappa \le \beta^{(1-\eta)/\eta} \leq 1$. 
	Since $\|(1-\alpha) g(\alpha z)\|_A \leq \|g\|_{H^\infty(D)} = 1$ by \prettyref{eq:feas1}, we have 
	$\|\tilde f_\alpha\|_{A} \leq \|(1-\alpha) g(\alpha z)\|_{A}/2 + \|g(\alpha z) - \tilde g(\alpha z)\|_{A}/2 \leq 1$ 
	and hence $\tilde f_\alpha$ is a feasible solution for $t(\delta/2,d)$. 
	
	To bound the value of the objective function,
	note that $\|\Phi \tilde f_\alpha\|_{A} \leq \|\tilde g\|_{H^\infty(D_\eta)}/2$ by \prettyref{eq:feas2}, where $\eta = 1-\alpha \epsilon$.
	Since $D_\eta \subset D$ we have 
	$\|\tilde g\|_{H^\infty(D_\eta)} \leq \|\tilde g\|_{H^\infty(D_\eta)} + \|g-\tilde g\|_{H^\infty(D)} \leq \beta^{(1-\eta)/\eta} + \kappa \leq 2 \beta^{(1-\eta)/\eta}$
	and hence $\|\Phi \tilde f_\alpha\|_{A} \leq \beta^{(1-\eta)/\eta}$.
	Again, choose $\alpha = 1 - \frac{1}{\log(1/\delta)}$ gives the same upper bound as in \prettyref{eq:mintv-bec2}. 	
	The choice of $d_0$ in \prettyref{eq:d0} ensures
	\[
	\kappa=\frac{\alpha^{d_0+1}}{1-\alpha} \leq \beta^{(1-\eta)/\eta}=\pth{\frac{\delta}{1-\alpha}}^{\frac{\alpha \epsilon }{1-\alpha \epsilon }},
	\]
	in view of the fact that $\log\frac{1}{1-b}\geq b$ for $b\in(0,1)$ and $\log x < x$ for $x>0$. 
	 \nb{details: need 
	$d_0 + 1 \geq (\log \frac{1}{b} + \frac{\alpha \epsilon }{1-\alpha \epsilon } \log \frac{1}{\delta}) \frac{1}{\log \frac{1}{1-b}}$, where $b=1/\log(1/\delta)$.
	Then bound $\log\log$ by $\log$ and $\frac{\alpha \epsilon }{1-\alpha \epsilon }$ by $\frac{\epsilon }{1-\epsilon }$.}
\end{proof}
}

\subsection{Tight statistical lower bounds}
\label{sec:lp-bec}
Based on the general theory in \prettyref{thm:dual}, to prove the minimax lower bound announced in \prettyref{thm:bec}, there are two things to fix:
\begin{enumerate}[(a)]
\item why numbers of $0$'s provides negligible information for estimating the probability of the zero vector;
\item how to fix the square-root gap in the general lower bound in \prettyref{thm:dual}.
\end{enumerate}


\
We start with the second task. Recall the relation between the total variation and the Hellinger distance \cite{Tsybakov09}
\begin{equation}
\frac{1}{2}H^2\le \TV \le H\sqrt{1-H^2/4}.
\label{eq:TVH}
\end{equation}
and the tensorization property of the Hellinger distance 
\[
H^2(P^{\otimes n}, Q^{\otimes n}) = 2 -2 (1- H^2(P,Q)/2)^n.
\]
Applying both to the original Le Cam's method  \prettyref{eq:lc-tv}, after simple algebra we get the following minimax
lower bound (still for estimators based on the number of ones only):
\begin{equation}
\sqrt{R_n} \ge {1\over 4} \max_{\pi,\pi'}\left\{\pi(0)-\pi'(0): H^2(\Phi \pi,\Phi \pi') \leq {1\over 2n} \right\}\,.
\label{eq:hellinger}
\end{equation}
for any constant $C$, where $\Phi$ is given in \prettyref{eq:Phi-bec}, i.e.
$\Phi \pi = \Expect_{\theta\sim \pi}[ \Binom(\theta,\bar\epsilon)]$.

It remains to show that for specific models, e.g., lossy \pop, we have the following locally quadratic-like behavior: 
given the optimal $\Delta$ to \prettyref{eq:mdelta}, one can find feasible $\pi$ and $\pi'$ for \prettyref{eq:hellinger} so that $\pi-\pi'\approx \Delta$ and 
$H^2(\Phi \pi,\Phi \pi') \lesssim \TV(\Phi \pi,\Phi \pi')^2$, such that the lower bound in \prettyref{eq:TVH} is essentially tight.
This is done in the next lemma.
To construct the pair $\pi$ and $\pi'$, the main idea is to perturb a fixed distribution $\mu$ by $\pm \Delta$. In this case it turns out the center $\mu$ can be chosen to be a geometric distribution.
Furthermore, recall that the near-optimal solution used in \prettyref{prop:mintv-bec} deals with infinite sequence $\Delta$ in \prettyref{eq:mdelta},
 which, in the context of population recovery, corresponds to input strings of infinite length.\footnote{For the simple case of $\epsilon \leq 1/2$, the construction in \prettyref{prop:mintv-bec} is a degree-1 polynomial. This amounts to considering a single bit and using a pair of Bernoulli distributions to establish the optimality of the parametric rate $1/n$, which is standard.} 
It turns out that it suffices to consider $d = \Omega(\log^2\frac{1}{\delta})$.

\begin{lemma} 
\label{lmm:bhel}
Fix $\epsilon > {1\over 2}$ and $\delta < \frac{1-\epsilon}{e}$. Then 
there exists a pair of probability distributions $\pi$ and $\pi'$ on $\mathbb{Z}_+$ such that $|\pi(0) - \pi'(0)|\ge
\delta$ and
\begin{equation}\label{eq:bhel}
	H^2(\Phi \pi, \Phi\pi') \le C \pth{e^2\delta_1 \log \frac{1}{\delta_1} }^{\frac{2\epsilon}{1-\epsilon}},
\end{equation}
where $C =4$ and $\delta_1 \eqdef {\delta \over 1-\epsilon}$.
Furthermore, if $C=36$, both distributions can be picked to be supported on $\{0,\ldots,
d\}$ provided that $d \geq \frac{2\epsilon}{1-\epsilon}\log^2\frac{1}{\delta_1}$.
\end{lemma}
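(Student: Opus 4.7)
The plan is to promote the two-point argument from the $\TV$-based inequality~\eqref{eq:dy7} (which loses a square root via $\TV(P^{\otimes n},Q^{\otimes n})\le n\TV(P,Q)$) to the Hellinger-based inequality~\eqref{eq:hellinger}, by using the explicit extremizer $f_\alpha$ from the upper bound construction in \prettyref{prop:mintv-bec} together with a carefully chosen reference distribution. Recall that $f_\alpha(z)=(1-\alpha)g(\alpha z)$ with $g(z)=\beta^{(1+z)/(1-z)}=\sum_n a_n z^n$, $\alpha = 1-1/\log(1/\delta)$, $\beta = \delta\log(1/\delta)$, and set $\Delta_j \eqdef (1-\alpha)a_j\alpha^j$. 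Since $|a_n|\le \|g\|_{H^\infty(D)}=1$, we have $|\Delta_j|\le (1-\alpha)\alpha^j$. This suggests the \emph{geometric reference} $\pi_0(j) \eqdef (1-\alpha)\alpha^j$: with it, the perturbations $\pi = \pi_0+\Delta/2$ and $\pi'=\pi_0-\Delta/2$ are automatically non-negative, and $|\pi(0)-\pi'(0)|=\Delta_0=\delta$. The normalization constraint $\iprod{\Delta}{\ones}=0$ is enforced by a negligible correction, since $\iprod{\Delta}{\ones}=f_\alpha(1)=(1-\alpha)\beta^{(1+\alpha)/(1-\alpha)}$ is super-polynomially small in $\delta$.

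The elementary bound $(\sqrt{a+b}+\sqrt{a-b})^2\ge 2a$ yields $H^2(\Phi\pi,\Phi\pi')\le\frac12\sum_x (\Phi\Delta)_x^2/(\Phi\pi_0)_x$. The crucial observation is that $\Phi\pi_0$ is also geometric: a direct generating-function computation gives $(\Phi\pi_0)_x = \frac{1-\alpha}{1-\alpha\epsilon}c^x$ with $c\eqdef \alpha\bar\epsilon/(1-\alpha\epsilon)<1$. The $\chi^2$-type quantity therefore becomes a weighted $\ell_2$-sum that Parseval evaluates on the circle $|z|=R$ with $R\eqdef 1/\sqrt{c}$:
\[
\sum_x \frac{(\Phi\Delta)_x^2}{(\Phi\pi_0)_x} = \frac{1-\alpha\epsilon}{1-\alpha}\cdot \frac{1}{2\pi}\int_0^{2\pi}\bigl|\Phi f_\alpha(Re^{i\theta})\bigr|^2 d\theta \le \frac{1-\alpha\epsilon}{1-\alpha}\sup_{|z|\le R}\bigl|\Phi f_\alpha(z)\bigr|^2.
\]
The image of $\{|z|\le R\}$ under the composition $z\mapsto \alpha(\bar\epsilon z+\epsilon)$ (which combines the channel substitution with the scaling inside $f_\alpha$) is a disk centered at $\alpha\epsilon$ of radius $\sqrt{\alpha\bar\epsilon(1-\alpha\epsilon)}$. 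Since $\alpha\bar\epsilon<1-\alpha\epsilon$, this disk is strictly contained in the horodisk $D_{1-\alpha\epsilon}$ (with the same center), so~\eqref{eq:gDeta} gives $\sup_{|z|\le R}|\Phi f_\alpha(z)|\le (1-\alpha)\beta^{\alpha\epsilon/(1-\alpha\epsilon)}$. Substituting the chosen values of $\alpha$ and $\beta$ and simplifying exactly as in~\eqref{eq:feas3} produces the desired $H^2 \le C(e\delta\log(1/\delta))^{2\epsilon/(1-\epsilon)}$.

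For the truncated version with $d \ge \frac{2\epsilon}{1-\epsilon}\log^2(1/\delta)$, I will truncate $\Delta$ to degree $d$. The tail $\sum_{j>d}|\Delta_j|\le \alpha^{d+1}$ decays like $\exp(-d/\log(1/\delta))$, which for the stated $d$ is much smaller than $(e\delta\log(1/\delta))^{\epsilon/(1-\epsilon)}$; the same estimate controls both the correction needed to restore $\iprod{\tilde\Delta}{\ones}=0$ and the perturbation to the sup-norm bound used in Parseval (the truncation error of $g(\alpha z)$ can be added inside the disk $\alpha D'$ without affecting the leading estimate), so all the preceding steps go through with a larger constant, $C=36$ sufficing. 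The main obstacle is the third step above: recognizing that the disk produced by Parseval is, after the channel substitution and the scaling by $\alpha$, precisely contained in a horodisk so that the Hadamard-three-lines estimate from \prettyref{prop:mintv-bec} delivers the sharp $\epsilon/(1-\epsilon)$ exponent; the truncation analysis is the most tedious bookkeeping because one must simultaneously preserve non-negativity, the $\ell_1$-constraint, and the sup-norm bound.
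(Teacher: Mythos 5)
Your construction of the infinite-support pair is correct and is essentially the paper's proof: the same geometric reference measure $\mu(k)=(1-\alpha)\alpha^k$, the same perturbation by the Taylor coefficients of the extremal function $f_\alpha$ from \prettyref{prop:mintv-bec} (with the same negligible $\ell_1$-correction $f_\alpha(1)$ to restore zero total mass), and the same $\chi^2$-type upper bound on $H^2$. The one genuinely different step is how you evaluate $\sum_x (\Phi\Delta)_x^2/(\Phi\mu)_x$: you use Parseval on the circle of radius $R=\sqrt{r}$ and then the horodisk estimate \eqref{eq:gDeta} on the image disk $\alpha\epsilon+\alpha\bar\epsilon\sqrt{r}\,D\subset D_{1-\alpha\epsilon}$, whereas the paper bounds each coefficient $|\Phi\Delta(k)|\le \|\Phi f\|_{H^\infty(rD)}r^{-k}$ by Cauchy's inequality \eqref{eq:cauchy} and sums the geometric series. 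Both give the same exponent; your route is marginally cleaner and yields a slightly smaller constant, since the $\ell_2$ sum is computed exactly rather than term-by-term.

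The truncated claim ($C=36$, support in $\{0,\ldots,d\}$) is where your plan has a real gap: truncating $\Delta$ to degree $d$ does not produce distributions supported on $\{0,\ldots,d\}$, because the geometric reference $\pi_0$ has infinite support, so $\pi=\pi_0\pm\tilde\Delta/2$ still puts mass on all of $\mathbb{Z}_+$. You must also truncate the reference, and if you do that by brute force the quantity $\Phi\tilde\pi_0$ is no longer geometric, so your Parseval identity (which relies on knowing $\Phi\pi_0$ exactly) has to be redone. The paper sidesteps this by conditioning the already-constructed $\pi,\pi'$ on $[0,d]$ and never re-entering the $\chi^2$ computation: the triangle inequality for $H$ plus the data-processing inequality give $H(\Phi\tilde\pi,\Phi\tilde\pi')\le H(\Phi\pi,\Phi\pi')+H(\pi,\tilde\pi)+H(\pi',\tilde\pi')$, and the conditioning cost $H(\pi,\tilde\pi)=\sqrt{2-2\sqrt{\pi([0,d])}}\le\sqrt{2\pi((d,\infty))}$ is controlled by the geometric tail $\alpha^{d+1}\le\delta^{2\epsilon/(1-\epsilon)}$ for $d\ge\frac{2\epsilon}{1-\epsilon}\log^2\frac{1}{\delta}$, exactly the tail estimate you already have. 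Your sketch ("all the preceding steps go through with a larger constant") is salvageable, but only via this conditioning argument, not via truncating $\Delta$ alone.
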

\begin{proof} Let $g,\beta,\alpha,b,\eta,r$ be as in the proof of \prettyref{prop:mintv-bec}. 
In particular, $\alpha=1-\frac{1-\epsilon}{\log \frac{1}{\delta_1}}$, $\eta=1-\alpha\epsilon$, $\beta=\delta_1\log\frac{1}{\delta_1}$, and $r=\frac{1-\alpha\epsilon}{\alpha\bar\epsilon}$.
Set
$$ f(z) = \bar\alpha g(\alpha z) - \bar\alpha g(\alpha)\,.$$
Let $\Delta_k \eqdef [z^k] f(z)$.
Then $\Delta_0=\bar\alpha g(0)-\bar\alpha g(\alpha) = \delta - \bar\alpha \beta^{{1+\alpha\over 1-\alpha}}$
and $\Delta_k = \bar\alpha \alpha^k [z^k] g(z)$ for $k\geq 1$.
We claim 
$$ \delta > \Delta_0 \ge {\delta\over 2}\,.$$
Indeed, the first inequality is clear, while the second is equivalent to $\beta^{2\alpha\over \bar \alpha} \le {1\over
2}$, which in turn follows from $\beta \le e^{-1}$ and $\alpha \ge 1/2$, both a consequence of the assumption 
$\delta_1 \le e^{-1}$.
Furthermore, recall from the proof of \prettyref{prop:mintv-bec} that $\|g\|_{H^\infty(D)} = 1$
and thus
\begin{equation}\label{eq:bh1}
	|\Delta_k| \le \bar \alpha \alpha^k\,,\qquad k \ge 1\,.
\end{equation}
Consider the following geometric distribution $\mu$ on $\mathbb{Z}_+$:
$$ \mu(k) \eqdef \bar \alpha \alpha^k\,. $$
Define now $\pi$ and $\pi'$ via
$$ \pi(k) \eqdef \mu(k) + \Delta_k\,, \quad \pi'(k) \eqdef \mu(k) - \Delta_k\,.$$
Note that $\mu(0) = \bar\alpha = {1-\epsilon\over \log {1\over \delta_1}} \ge \delta > \Delta_0$, which implies $\pi(0)\ge 0$ and
$\pi'(0)\ge 0$. Furthermore, from~\eqref{eq:bh1} we get that $\pi(k),\pi'(k)\ge 0$ for all $k\in\mathbb{Z}_+$. Since
$f(1)=\sum_k \Delta_k = 0$ we conclude that $\pi$ and $\pi'$ are indeed probability distributions satisfying
$$ \pi(0) - \pi'(0)=2\Delta_0 \ge \delta\,.$$
Next, notice that since $(\sqrt{1+r}-\sqrt{1-r})^2 \le 2r^2$ for all $r\in[0,1]$ we get
\begin{equation}\label{eq:bh4}
	H^2(\Phi\pi, \Phi\pi') = \sum_{k\ge 0} \left(
	\sqrt{\Phi\mu(k) + \Phi\Delta(k)} - \sqrt{\Phi\mu(k) - \Phi\Delta(k)}\right)^2  \le 2
		\sum_{k\ge 0} {\Phi\Delta(k)^2\over \Phi\mu(k)}\,.
\end{equation}		
Elementary calculation (e.g. from~\eqref{eq:bec-op} and $\sum_k z^k \mu(k) = {1-\alpha\over 1-\alpha z}$) shows that 
\begin{equation}\label{eq:bh2}
	\Phi\mu(k) = \left(1-{1\over r}\right) r^{-k}\,.
\end{equation}
We also know from the proof of \prettyref{prop:mintv-bec} and $g(\alpha) = \beta^{1+\alpha\over 1-\alpha} \le \beta^{\bar \eta\over \eta}$ that
\[\|\Phi f\|_{H^\infty(rD)} \le \bar \alpha \beta^{\bar \eta\over \eta} + \bar \alpha g(\alpha) \le 2 \bar
\alpha \beta^{\bar \eta\over \eta}\,.
\]
Therefore, from~\eqref{eq:cauchy} we get
\begin{equation}\label{eq:bh3}
	|\Phi\Delta(k)| = | [z^k] f(\bar\epsilon z + \epsilon) | \le2 \bar \alpha \beta^{\bar \eta\over \eta} r^{-k} \,.
\end{equation}
Plugging~\eqref{eq:bh2} and~\eqref{eq:bh3} into~\eqref{eq:bh4} we obtain as in~\eqref{eq:feas3}-\eqref{eq:feas4}:
\begin{equation}\label{eq:bh9}
	H^2(\Phi\pi, \Phi\pi') 
	\leq 4 \bar\alpha^2 \beta^{2\bar\eta/\eta} (1-1/r)^{-2} 
=	4 (1-\alpha \epsilon)^2 \beta^{2\bar\eta/\eta} 
	\le 4 \pth{e^2 \delta_1 \log \frac{1}{\delta_1} }^{\frac{2\epsilon}{1-\epsilon}}\,.
\end{equation}

To prove the second part, we replace $\pi,\pi'$ by their conditional version, denote by $\tilde \pi(k) = {\pi(k)\over \pi([0,d])}$ for $k\le d$ and similarly for $\tilde
\pi'$. 
Then 
\begin{equation}\label{eq:bh-tail}
\pi((d,\infty))+\pi'((d,\infty)) \le 2 \mu((d,\infty)) = 2 \sum_{k>d} \bar \alpha \alpha^k = 2\alpha^{d+1} \leq 2
\exp\pth{-\frac{d}{\log\frac{1}{\delta}}} \leq 2\delta_1^{2\epsilon\over 1-\epsilon},
\end{equation}
where the last inequality holds provided that $d \geq \frac{2\epsilon}{1-\epsilon}\log^2\frac{1}{\delta_1}$.
For Hellinger, 
\begin{align} H(\Phi\pi,\Phi\pi') &\ge H(\Phi\tilde \pi, \Phi \tilde \pi') -H(\Phi\pi, \Phi \tilde \pi) -
		H(\Phi\pi', \Phi \tilde \pi')\label{eq:bh5}\\
	&\ge H(\Phi\tilde \pi, \Phi \tilde \pi') -H(\pi, \tilde \pi) -
		H(\pi', \tilde \pi')\label{eq:bh6}\\
	&= H(\Phi\tilde \pi, \Phi \tilde \pi') - \sqrt{2-2\sqrt{\pi([0,d])}} - \sqrt{2-2\sqrt{\pi'([0,d])}}\
		\label{eq:bh7}\\
	&\ge H(\Phi\tilde \pi, \Phi \tilde \pi') - 4\delta_1^{\epsilon\over 1-\epsilon}\,,\label{eq:bh8}
\end{align}
where~\eqref{eq:bh5} is from the triangle inequality for Hellinger distance,~\eqref{eq:bh6} is from the data processing inequality for the
latter,~\eqref{eq:bh7} is explicit computation and finally in~\eqref{eq:bh8} 
we used \prettyref{eq:bh-tail} and
the fact that $\sqrt{2-2\sqrt{1-x}} \leq \sqrt{2x}$ for all $0\leq x \leq 1$.
In view of \prettyref{eq:bh9}, this completes the proof of \prettyref{eq:bhel} with $C=36$.
\end{proof}

\medskip

Finally, we put everything together.

\begin{proof}[Proof of Theorem~\ref{thm:bec}] The upper-bound on the sample complexity follows from
Theorem~\ref{thm:dual} and Proposition~\ref{prop:mintv-bec}. 

For the lower bound, we need to show that the number of zeros carries almost no information. 
This is done by dimension expansion. Indeed, set $d'\gg d$ and extend $\pi$ and $\pi'$ to distributions on
$\{0,\ldots,d'\}$ by zero-padding.
The intuition is that if the input vector contains at most $d$ ones, then the number of zeros of the output is
distributed approximately as $\Binom(d',\bar\epsilon)$, almost independent of the input. We make this idea precise next.

Fix $\delta=4\delta'>0$, set $\delta_1 = {\delta \over 1-\epsilon}$ and $d ={2\epsilon\over
1-\epsilon} \log^2 {1\over \delta_1}$ and consider probability distributions
$\pi$ and $\pi'$ constructed in Lemma~\ref{lmm:bhel} with $H^2(\Phi \pi, \Phi \pi')\le h_1$, where
\begin{equation}\label{eq:bx1}
	h_1 = C \pth{e^2 \delta_1 \log \frac{1}{\delta_1} }^{\frac{2\epsilon}{1-\epsilon}}\,.
\end{equation}
Take $d'=\frac{16 \bar \epsilon d^2}{\epsilon h_1}$ and note that
according to Lemma \ref{lmm:bin} in \prettyref{app:aux} we have
\begin{equation}\label{eq:bx2}
	H^2(\Binom(d'-s,\bar\epsilon), \Binom(d',\bar\epsilon)) \leq \frac{4 \bar\epsilon d^2}{\epsilon d'}  = {h_1\over 4}\qquad \forall s\in\{0,\ldots,d\}\,.
\end{equation}
It suffices to show
\begin{equation}
	H^2( \Expect_{\theta\sim \pi}[ \Binom(\theta,\bar\epsilon)\otimes \Binom(d'-\theta,\bar\epsilon) ],
	\Expect_{\theta\sim \pi'}[ \Binom(\theta,\bar\epsilon)\otimes \Binom(d'-\theta,\bar\epsilon) ])	\le 4h_1\,,
\label{eq:hellinger-mix}
\end{equation}
Indeed, assuming~\eqref{eq:hellinger-mix} we can conclude from~\eqref{eq:hellinger} that 
	$$ n^*_{\mathsf{L}}(\delta',d') \ge {1\over 8 h_1}\,,$$
	which in view of~\eqref{eq:bx1} is precisely the statement of the theorem.
To show~\eqref{eq:hellinger-mix} consider the following chain:
\begin{align}
	& ~ H(\Expect_{\theta\sim \pi} [\Binom(\theta,\bar\epsilon) \otimes \Binom(d'-\theta,\bar\epsilon)], \Expect_{\theta\sim \pi'} [\Binom(\theta,\bar\epsilon) \otimes \Binom(d'-\theta,\bar\epsilon)])
	\nonumber \\
\leq & ~ H(\Expect_{\theta\sim \pi} [\Binom(\theta,\bar\epsilon)], \Expect_{\theta\sim \pi'}
[\Binom(\theta,\bar\epsilon)]) + \sqrt{h_1} \label{eq:bx3}\\
= & H(\Phi \pi, \Phi \pi') + \sqrt{h_1} \le 2\sqrt{h_1}
\end{align}
where~\eqref{eq:bx3} is an application of the following Lemma with $\tau=\sqrt{h_1}/2$.
\end{proof}

\begin{lemma}
\label{lmm:Hprod}	
	Suppose there exists $Q^*$ such that $H(Q^*,Q_\theta) \leq  \tau$ for any $\theta\in\Theta$. Then for any distributions $\pi$ and $\pi'$ on $\Theta$,
	\[
	H(\Expect_{\theta\sim \pi} [P_\theta \otimes Q_\theta],\Expect_{\theta\sim \pi'} [P_\theta \otimes Q_\theta]) \leq 
	H(\Expect_{\theta\sim \pi} P_\theta, \Expect_{\theta\sim \pi'} P_\theta) +	2\tau.
	\]
\end{lemma}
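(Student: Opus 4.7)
The plan is to introduce the common reference measure $Q^*$ by a triple application of the triangle inequality for the Hellinger distance, which allows us to replace each $Q_\theta$ by $Q^*$ at a cost of $\tau$ per substitution, and then exploit the fact that a common factor in a product measure contributes nothing to Hellinger distance.

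Concretely, I would first write, by the triangle inequality for $H$,
\begin{align*}
H(\Expect_{\pi}[P_\theta \otimes Q_\theta], \Expect_{\pi'}[P_\theta \otimes Q_\theta])
&\le H(\Expect_{\pi}[P_\theta \otimes Q_\theta], \Expect_{\pi}[P_\theta \otimes Q^*]) \\
&\quad + H(\Expect_{\pi}[P_\theta \otimes Q^*], \Expect_{\pi'}[P_\theta \otimes Q^*]) \\
&\quad + H(\Expect_{\pi'}[P_\theta \otimes Q^*], \Expect_{\pi'}[P_\theta \otimes Q_\theta]).
\end{align*}
For the middle term, pulling out the common second factor and using the identity $H^2(P_1 \otimes R, P_2 \otimes R) = H^2(P_1,P_2)$ (which follows directly from $\int(\sqrt{dP_1 dR}-\sqrt{dP_2 dR})^2 = \int dR\int (\sqrt{dP_1}-\sqrt{dP_2})^2$), yields $H(\Expect_\pi P_\theta, \Expect_{\pi'} P_\theta)$, which is exactly the term we want to appear.

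For the first and third terms, I would use joint convexity of the squared Hellinger distance (equivalently, concavity of the Hellinger affinity $A(P,Q)=\int\sqrt{dP\,dQ}$): for any coupling,
\[
H^2(\Expect_{\pi}[P_\theta \otimes Q_\theta], \Expect_{\pi}[P_\theta \otimes Q^*]) \le \Expect_{\theta\sim \pi} H^2(P_\theta \otimes Q_\theta, P_\theta \otimes Q^*) = \Expect_{\theta\sim \pi} H^2(Q_\theta, Q^*) \le \tau^2,
\]
where the middle equality again uses the common-factor identity and the last inequality uses the hypothesis. Taking square roots gives a bound of $\tau$, and the identical argument with $\pi'$ handles the third term. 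Summing the three contributions produces the claimed inequality.

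No step here is particularly obstructive: the two ingredients (joint convexity of $H^2$ and the common-factor tensorization) are both standard, and the only mild care needed is to ensure one applies convexity in squared Hellinger and then passes to $H$ via $\sqrt{\cdot}$, rather than asserting convexity of $H$ itself.
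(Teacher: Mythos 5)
Your proof is correct and follows essentially the same route as the paper: the triangle inequality through $\Expect_{\theta\sim\pi}[P_\theta\otimes Q^*]$, the common-factor identity for the middle term, and convexity for the two perturbation terms. In fact, your insistence on applying joint convexity to $H^2$ and only then taking square roots is slightly more careful than the paper's own write-up, which invokes convexity of $H$ itself (a property $H$ does not actually possess, although the final bound of $\tau$ is unaffected since $\sqrt{\Expect_{\theta\sim\pi} H^2(Q_\theta,Q^*)}\le\tau$ under the hypothesis).
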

\begin{proof}
 	The triangle inequality yields
	\begin{align*}
		H(\Expect_{\theta\sim \pi} [P_\theta \otimes Q_\theta],\Expect_{\theta\sim \pi'} [P_\theta \otimes Q_\theta]) 
\leq & ~ 	H(\Expect_{\theta\sim \pi} [P_\theta \otimes Q^*],\Expect_{\theta\sim \pi'} [P_\theta \otimes Q^*]) \\
& ~ +	H(\Expect_{\theta\sim \pi} [P_\theta \otimes Q_\theta],\Expect_{\theta\sim \pi} [P_\theta \otimes Q^*]) \\
& ~ +	H(\Expect_{\theta\sim \pi'} [P_\theta \otimes Q_\theta],\Expect_{\theta\sim \pi'} [P_\theta \otimes Q^*]).
	\end{align*}
	Here 
	\[
	H(\Expect_{\theta\sim \pi} [P_\theta \otimes Q^*],\Expect_{\theta\sim \pi'} [P_\theta \otimes Q^*]) = H(\Expect_{\theta\sim \pi} [P_\theta] \otimes Q^*,\Expect_{\theta\sim \pi'} [P_\theta] \otimes Q^*) = H(\Expect_{\theta\sim \pi} [P_\theta],\Expect_{\theta\sim \pi'} [P_\theta])
	\]
	and, by convexity,
	$H(\Expect_{\theta\sim \pi} [P_\theta \otimes Q_\theta],\Expect_{\theta\sim \pi} [P_\theta \otimes Q^*]) \leq \Expect_{\theta\sim \pi}H(Q_\theta,Q^*) \leq \tau$.
\end{proof}

\apxonly{Note: this lemma and this result can also be derived from the fact that
	$$ 1-H^2(P_{XY}, P_{X'Y'})/2 \ge (1-H^2(P_{X, P_X'})/2) \min_{x,x'} (1-H^2(P_{Y|X=x, P_{Y'|X'=x'}})/2) $$}

\apxonly{
\subsection{Bounds for general estimators via analysis alone}

Among the two problems identified in the beginning of Section~\ref{sec:lp-bec} only the second one requires statistical tools (Hellinger distance).
Indeed, in this section we briefly outline a purely analytic resolution of the first problem.

Similarly to~\eqref{eq:mdef_2} we now define
$$ t_d(\delta) = \inf_{\Delta}  \quad \sum_{x_0,x_1} \left|\sum_{\ell=0}^d \Delta_\ell {\ell \choose x_1} {d-\ell \choose x_0} \bar
\epsilon^{x_1+x_0} \epsilon^{d-x_0-x_1}\right|\,,$$
where the conditions on $\{\Delta_\ell: \ell = 0,\ldots d\}$ are as before: $\Delta_0=\delta$, $\|\Delta\|_1\le 1$. Defining
$$ f(z) \eqdef \sum_{\ell} \Delta_\ell z^\ell\,,$$
we can rephrase the problem as
$$ t_d(\delta) = \min \{\|\tilde\Phi_d f\|_A: \|f\|_A \le 1, f(1)=0, f(0)=\delta\}\,,$$
where the operator $\tilde \Phi_d$ maps a univariate polynomial $f$ of degree $d$ to a bi-variate polynomial by
the rule
\begin{equation}\label{eq:phi2}
	(\tilde\Phi_d f)(z_0,z_1) \eqdef (\bar \epsilon z_0 + \epsilon)^d f\left( \bar \epsilon z_1 + \epsilon \over \bar \epsilon
z_0 + \epsilon \right)\,,
\end{equation}
and $H^\infty$ and $A$-norms for bi-variate polynomials are defined as
\begin{align} \|\sum_{n_0,n_1} a_{n_0,n_1} z_0^{n_0} z_1^{n_1} \|_{A} &\eqdef \sum_{n_0,n_1} |a_{n_0,n_1}|\\
   \|g(z_0,z_1) \|_{H^\infty(D)} &\eqdef \sup_{z_0,z_1 \in D} |g(z_0,z_1)|
\end{align}

Bounding the value of $t_d(\delta)$ is done with the help of the following Lemma. Note that the $d_0$-degree polynomial
$f$ can be taken to be exactly the one constructed in the proof of Lemma~\ref{lem:horo}.

\nbr{YP needs to spell out the punchline of this result: (a) improve condition on $d$;  (b) cannot be combined with the Hellinger estimate so loses a factor of two in the exponent  }

\begin{lemma} Fix $\epsilon > 1/2$, then there exists a constant $C_\epsilon$ as follows. Let $f$ be a degree $d_0$
polynomial such that $\|f\|_{H^\infty(D)} \le 1$ and $\sup_{z\in D}|f(\bar\epsilon z +
\epsilon)| = \delta$. Then for $d = \lceil d_0 \log {1\over \delta} \rceil$ we have
	\begin{equation}\label{eq:beluga}
		\delta \le \|\tilde \Phi_d f\|_{A} \le C_\epsilon \cdot \delta  d_0^3  \log^2 {1\over
	\delta}\,.
\end{equation}	
\end{lemma}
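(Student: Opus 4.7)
The \emph{lower bound} is immediate: since $(\bar\epsilon\cdot 1+\epsilon)^d=1$, we have $(\tilde\Phi_d f)(1,z_1) = f(\bar\epsilon z_1+\epsilon)$, and specializing one variable of a bivariate power series cannot increase the $A$-norm of the remaining univariate series (the coefficients merely collapse by summing). Therefore $\|\tilde\Phi_d f\|_A \ge \|f(\bar\epsilon \cdot +\epsilon)\|_A \ge \|f(\bar\epsilon\cdot +\epsilon)\|_{H^\infty(D)} = \delta$, the last inequality being~\eqref{eq:bd1}.

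For the \emph{upper bound}, the plan is to split the work into two conceptually separate steps: first a coarse Cauchy-type estimate converting the $A$-norm into an $H^\infty(D^2)$-norm, and then a fine bound on that sup-norm by a case split patterned after the preceding commented lemma. For step one, the bivariate polynomial $g \eqdef \tilde\Phi_d f$ has total degree $d$, so Cauchy's inequality on the unit bidisk gives $|[z_0^{n_0}z_1^{n_1}]g| \le \|g\|_{H^\infty(D^2)}$ for every $(n_0,n_1)$, and summing over $n_0+n_1 \le d$ yields $\|g\|_A \le \binom{d+2}{2}\|g\|_{H^\infty(D^2)} = O(d^2)\, \|g\|_{H^\infty(D^2)}$.

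For step two, I fix a threshold $\theta\in(0,1)$ to be calibrated and split the supremum over $(z_0,z_1)\in D^2$ according to whether $|\bar\epsilon z_0 + \epsilon|$ exceeds $\theta$. In the ``large'' case, a geometric argument shows that the image of $z_1\mapsto (\bar\epsilon z_1+\epsilon)/(\bar\epsilon z_0+\epsilon)$ as $z_1\in D$ lies inside the image of $z\mapsto\bar\epsilon z+\epsilon$ applied to the disk of radius $s=(1-\theta\epsilon)/(1-\epsilon)$; combining the polynomial-scaling estimate~\eqref{eq:blg1} applied to $f(\bar\epsilon \cdot +\epsilon)$ (whose $H^\infty(D)$-norm is $\delta$) with $|(\bar\epsilon z_0+\epsilon)^d|\le 1$ gives $|g(z_0,z_1)| \le d_0 s^{d_0}\delta$. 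In the ``small'' case, $|(\bar\epsilon z_0+\epsilon)^d|\le\theta^d$ directly and $|f(w)|\le (d_0+1)\theta^{-d_0}$ by Cauchy together with $|w|\le 1/\theta$, yielding $|g(z_0,z_1)|\le(d_0+1)\theta^{d-d_0}$. The calibration $\theta=e^{-1/d_0}$ together with $d=\lceil d_0\log(1/\delta)\rceil$ balances the two bounds: $\theta^d=\delta$ while $s^{d_0}\to e^{\epsilon/\bar\epsilon}$, so both cases contribute at most $C_\epsilon d_0\delta$, giving $\|g\|_{H^\infty(D^2)} \le C_\epsilon d_0\delta$. Chaining with step one produces $\|g\|_A \le O(d^2)\cdot C_\epsilon d_0\delta = C'_\epsilon d_0^3 \log^2(1/\delta)\cdot \delta$, which is the claimed estimate.

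The step I anticipate as most delicate is the geometric disk-containment argument in the ``large'' case: one must estimate both the radius and the center displacement of the rational image disk uniformly over $\{|\bar\epsilon z_0+\epsilon|>\theta\}$ and verify that the scaling factor $s=(1-\theta\epsilon)/(1-\epsilon)$ is tight enough so that, after raising to the $d_0$-th power under the calibration $\theta=e^{-1/d_0}$, the resulting constant stays $O_\epsilon(1)$ rather than growing with $d_0$. Any looseness here would inflate the exponent of $d_0$ in the final bound and destroy the tight scaling $d_0^3\log^2(1/\delta)$.
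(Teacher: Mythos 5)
Your route is essentially the one the paper's own (draft) proof takes: reduce the $A$-norm to the $H^\infty(D^2)$-norm by a Cauchy/degree-count argument costing $O(d^2)$, then bound the sup-norm by splitting on whether $|\bar\epsilon z_0+\epsilon|$ exceeds a threshold $\theta$ with $\theta^d=\delta$. The lower bound (specialize $z_0=1$, note coefficient collapse cannot increase the $A$-norm) and the reduction step are fine; in Case~2 the intermediate claim ``$|w|\le 1/\theta$'' should read $|w|\le 1/|w_0|$, but the conclusion $|g(z_0,z_1)|\le (d_0+1)\theta^{d-d_0}$ survives once one distributes $w_0^d$ across the monomials of $f$.

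The genuine gap is exactly the step you flag as delicate, and it does not close as stated: the disk-containment claim in the ``large'' case is false for non-real $z_0$ (the paper's own source marks this step with a note admitting the mistake). Concretely, write $w_0=\bar\epsilon z_0+\epsilon=\epsilon+\bar\epsilon e^{i\psi}$ for $z_0$ on the unit circle; then
\begin{equation*}
|1-w_0|^2=\tfrac{\bar\epsilon}{\epsilon}\bigl(1-|w_0|^2\bigr),
\end{equation*}
so on the set $\{|w_0|=\theta\}$ one can have $|1-w_0|\asymp\sqrt{1-\theta}$ rather than $1-\theta$. The image of $D$ under $z_1\mapsto(\bar\epsilon z_1+\epsilon)/w_0$ is the disk of center $\epsilon/w_0$ and radius $\bar\epsilon/|w_0|$, and its containment in $\epsilon+\bar\epsilon sD$ requires $\epsilon|1-w_0|/|w_0|+\bar\epsilon/|w_0|\le\bar\epsilon s$, hence $s\ge 1+c_\epsilon\sqrt{1-\theta}$ at the worst admissible $w_0$ --- not $s=(1-\theta\epsilon)/(1-\epsilon)=1+O(1-\theta)$. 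Under the calibration $\theta=e^{-1/d_0}$ this forces $s^{d_0}\ge e^{c_\epsilon\sqrt{d_0}}$, which is not $O_\epsilon(1)$ and ruins the $\delta\,d_0^3\log^2(1/\delta)$ scaling. So Case~1 needs a different argument (e.g.\ showing that real $z_0$ is extremal for the relevant quantity, or bounding $|w_0|^d|f((\bar\epsilon z_1+\epsilon)/w_0)|$ directly without the disk containment); as written, both your proof and the paper's draft leave this unproven.
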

\begin{proof} We will use the following relation between $A$ and $H^\infty$ norms:
\begin{equation}\label{eq:bld2}
	\|g\|_{H^\infty} \le \|g\|_A \le  d^2 \|g\|_{H^\infty}\,,
\end{equation}
for any degree-$d$ bivariate polynomial $g$.
For the lower bound in~\eqref{eq:beluga} we use first bound in~\eqref{eq:bld2} and simply lower-bound $H^\infty$ by taking $z_1\to1$. We proceed to the upper
bound.
Let us denote $f(z) = \sum_{n\le d_0} a_n z^n$ and 
$$ M_d \eqdef d^2 \|\tilde \Phi_d f\|_{H^\infty(D)}$$
By~\eqref{eq:bld2} we need to show an upper-bound on $M_d$ only.

We will use the following property: For any $r>1$ and any degree-$d_0$ polynomial $p$ we have
\begin{equation}\label{eq:blg1}
		\sup_{z\in D} |p(zr)| \le r^{d_0} d_0 \sup_{z\in D} |p(z)|\,,
\end{equation}	
which follows from an obvious $|[z^k] p(z)| \le \sup_{z\in D} |p(z)|$.

Fix some $\theta \in (0,1)$ (it will be taken close to 1).  We will compute $H^\infty(D)$-norm of $\tilde \Phi_d f$ by
fixing $z_0$ and maximizing over $z_1$. We separate two cases: 

Case 1. If $\theta < |\bar \epsilon z_0 + \epsilon| \le 1$ then we first notice that disk described by the image of 
	$$ z_1 \mapsto { \bar \epsilon z_1 + \epsilon \over \bar \epsilon z_0 + \epsilon} $$
	as $z_1$ ranges over $D$
	is contained inside the disk
	$$ z_1 \mapsto \bar \epsilon z_1 + \epsilon$$
	as $z_1$ ranges over ${1-\theta \epsilon\over 1-\epsilon} D$ (\textbf{TODO:} Mistake! This is only true when
	$z_0$ is real, so need to show that $z_0$ real is the worst case!). Thus by~\eqref{eq:blg1} we get
	$$ \sup_{z_1 \in D} d^2 \left|(\bar \epsilon z_0 + \epsilon)^d
	f\left( \bar \epsilon z_1 + \epsilon \over \bar \epsilon z_0 + \epsilon \right)\right| \le
		d^2 d_0 \left({1-\theta \epsilon\over 1-\epsilon}\right)^{d_0} \delta $$
	and in view of ${1-\theta \epsilon\over 1-\epsilon} \le \theta^{-{\epsilon\over \bar\epsilon}-1}$ we can
	continue as
	$$ \sup_{z_1 \in D} d^2 \left|(\bar \epsilon z_0 + \epsilon)^d
	f\left( \bar \epsilon z_1 + \epsilon \over \bar \epsilon z_0 + \epsilon \right)\right|\le d^2 d_0 \theta^{-c d_0} \delta $$
	for some $c>1$. 

Case 2. If $|\bar\epsilon z_0 + \epsilon| \le \theta$ then we upper bound 
	$$ \sup_{z_1 \in D} d^2 \left|(\bar \epsilon z_0 + \epsilon)^d
	f\left( \bar \epsilon z_1 + \epsilon \over \bar \epsilon z_0 + \epsilon \right)\right| \le
		d^2 \sup_{z_1 \in D} \sum_{n\le d_0} |a_n| \theta^{d-n} |\bar \epsilon z_1 + \epsilon|^n \le d^2 d_0
		\theta^{d-d_0} \|f\|_{H^\infty(D)} \le d^2 d_0 \theta^d \theta^{-cd_0}\,.$$
	where the last step is due to $c>1$.
	
	Thus, to summarize we get:
	$$ M_d \le 
		d^2 d_0 \theta^{-cd_0} \cdot \max \left\{\theta^d, \delta\right\}\,.$$
	Now choose $d$ as in statement of Lemma and $\theta$ so that
$\theta^d = \delta$.
	Then we get $\theta^{-c d_0} = e^c$ and thus
		$$ M_d \le d_0^3  \left(\ln {1\over \delta}\right)^2 e^c \delta\,, $$
	as claimed.
\end{proof}
}

\section{Noisy population recovery}
\label{sec:bsc}

In this section we bound the value of the LP \prettyref{eq:LP-G} for noisy \pop.
As previously mentioned in \prettyref{rmk:hellinger-preview}, 
since in this case the sample complexity turns out to grow super-polynomially, 
the general result in \prettyref{thm:dual} suffices to produce
the sample complexity bound announced in \prettyref{thm:bsc}
and there is no need to consider refined Hellinger-based minimax lower bound as developed in \prettyref{sec:lp-bec} for lossy \pop.

We begin by specializing the LP in \prettyref{eq:LP-G} to the noisy \pop setting with error probability $\epsilon$, where the transition matrix $\Phi$
is given by \prettyref{eq:bsc} throughout this section.
Recall from \prettyref{eq:bsc} that conditioned on the input Hamming weight $w(X)=j$, the 
output Hamming  weight $w(Y)$ is distributed as the convolution $\Binom(j,\bar\epsilon) * \Binom(d-j,\epsilon)$.
Thus, similarly to \prettyref{eq:mdelta}, we consider the equivalent formulation:
\begin{equation}
t(\delta,d) \eqdef \min_{\Delta\in\reals^{d+1}} \left\{\left\|\sum_{j=0}^d \Delta_j \Binom(j,\bar\epsilon) * \Binom(d-j,\epsilon) \right\|_1: \Delta_0 \ge \delta, \|\Delta\|_1 \le 1
\right\}\,.
\label{eq:mdelta-bsc}
\end{equation}
Since we can always negate the observed bits, in the sequel we shall assume, without loss of generality, that
\[
\epsilon < 1/2.
\]

To recast \prettyref{eq:mdelta-bsc} as an optimization problem in terms of functions (polynomials), we note that the channel $\Phi$ maps the generating function of the input weight to that of the output as follows:
\[
\Expect[z^{w(X)}] \overset{\Phi}{\mapsto}  \Expect[z^{w(Y)}]=\Expect[z^{\Binom(w(X),\bar\epsilon) + \Binom(d-w(X),\epsilon)}] = \Expect[(\bar\epsilon z + \epsilon)^{w(X)} (\epsilon z + \bar\epsilon)^{d-w(X)}  ]
\]
that is,
\begin{equation}
f(z) \mapsto (\Phi f)(z) \triangleq f\pth{\frac{\bar\epsilon z + \epsilon}{\epsilon z + \bar\epsilon}} (\epsilon z + \bar\epsilon)^d.
\label{eq:bsc-op}
\end{equation}
Thus, each degree-$d$ polynomial is mapped to another via
\[
\sum_{i=0}^d a_i z^i \mapsto \sum_{i=0}^d a_i (\bar\epsilon z + \epsilon)^i(\epsilon z + \bar\epsilon)^{d-i}.
\]
Therefore, using the $A$-norm introduced in \prettyref{sec:bec-horo}, we have:
\begin{equation}
t(\delta,  d) = \min \sth{\left\|f\pth{\frac{\bar\epsilon z + \epsilon}{\epsilon z + \bar\epsilon}} (\epsilon z + \bar\epsilon)^d \right\|_{A}: f(0) \geq \delta,
 \|f\|_{A} \le 1, \deg f \le d}\,.
\label{eq:mdef_2-bsc}
\end{equation}

As usual, by choosing $f\equiv\delta$, we have the trivial bound
\begin{equation}
t(\delta,  d) \leq \delta.
\label{eq:m-trivial}
\end{equation}
The next proposition provides a sharp characterization:

\begin{prop} 
\label{prop:mintv-bsc}

Assume that $\epsilon < \frac{1}{2}$ and $d\geq 1$. Define
\begin{equation}
\mu(\epsilon) \triangleq \frac{\epsilon(1-\epsilon)}{(1-2\epsilon)^2}.
\label{eq:mu}
\end{equation}
There exist absolute constant $c,c'$ such that the following hold:
For all $\delta < 1$, 
\begin{equation}
t(\delta,d) \geq \exp\sth{-c   \pth{d \mu(\epsilon) \log^2 \frac{e}{\delta}}^{1/3} }.
\label{eq:m-lb}
\end{equation}
For all $\delta < 1/3$, 
\begin{equation}
t(\delta,d) \leq 
\max\sth{
\exp(-(1-2\epsilon)^2d), 
\exp\pth{- c' \pth{d\mu(\epsilon) \log^2 \frac{1}{\delta}}^{1/3} }}.
\label{eq:m-ub}
\end{equation}
\end{prop}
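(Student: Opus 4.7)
The plan mirrors that of \prettyref{prop:mintv-bec}: bound the LP value \prettyref{eq:mdef_2-bsc} in the function-theoretic formulation via the $H^\infty$-relaxation of the $A$-norm, now adapted to the operator in \prettyref{eq:bsc-op}. The key geometric fact is that the M\"obius map $w(z)=(\bar\epsilon z+\epsilon)/(\epsilon z+\bar\epsilon)$ is an automorphism of the closed unit disk $\bar D$, so parametrizing $u=w(z)$ on $\partial D$ converts $\Phi f$ on the unit circle into the weighted function $|f(u)|\rho(u)^d$, where $\rho(u)=(1-2\epsilon)/|\bar\epsilon-u\epsilon|$ satisfies $\rho(1)=1$, $\rho(-1)=1-2\epsilon$, and, by direct computation, $\rho(e^{i\phi})^d\ge\exp(-C d\mu(\epsilon)\phi^2)$ for all $|\phi|\le\pi$. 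This reduces everything to the interplay between the decay of $\rho^d$ away from $u=1$ and the regularity at $u=1$ of an analytic $f$ with $A$-norm at most $1$ and $|f(0)|\ge\delta$.

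For the lower bound \prettyref{eq:m-lb}, I would use $\|\Phi f\|_A\ge\|\Phi f\|_{H^\infty(D)}=\sup_{|u|=1}|f(u)|\rho(u)^d\triangleq M$. Since any feasible $f$ has $|f|\le 1$ on $\bar D$ and $|f(u)|\le M/\rho(u)^d\le M\exp(Cd\mu r^2)$ on the arc $|\arg u|\le r$, Jensen's formula $\log\delta\le\int_0^{2\pi}\log|f(e^{i\phi})|\,d\phi/(2\pi)$, together with the split bound $(r/\pi)(\log M+Cd\mu r^2)$ on the arc and $0$ on the complement, yields
\[
\log(1/M)\le\frac{\pi}{r}\log(1/\delta)+Cd\mu r^2,
\]
which, optimized at $r\asymp(\log(1/\delta)/(d\mu))^{1/3}$, produces $M\ge\exp(-c(d\mu\log^2(e/\delta))^{1/3})$, matching \prettyref{eq:m-lb}.

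For the upper bound \prettyref{eq:m-ub}, I would exhibit an explicit polynomial $f$ of degree at most $d$ realizing the same tradeoff. The natural candidate is a ``peak polynomial'' $f$ concentrated in an arc of length $\sim r^*\asymp(\log(1/\delta)/(d\mu))^{1/3}$ about $u=1$, satisfying $f(0)\ge\delta$ and $\|f\|_A\le 1$; constructions from \cite{borwein1997littlewood} on polynomials with controlled sup-norms on subarcs of the unit circle furnish such $f$. With this choice, $f(w(z))$ is small near $z=1$ while $|\epsilon z+\bar\epsilon|^d$ decays elsewhere on $\partial D$, and the two regimes combine to give the $\exp(-c'(d\mu\log^2(1/\delta))^{1/3})$ branch. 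The alternative $\exp(-(1-2\epsilon)^2 d)$ branch is witnessed by the trivial construction $f\equiv\delta$, which gives $\|\Phi f\|_A=\delta$ and dominates in the regime $d\lesssim\log(1/\delta)/(1-2\epsilon)^2$ where $\delta$ is already smaller than $\exp(-(1-2\epsilon)^2 d)$.

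The main obstacle will be controlling the $A$-norm, rather than just the $H^\infty$-norm, of $\Phi f$ in the upper-bound construction: the naive submultiplicativity $\|pq\|_A\le\|p\|_A\|q\|_A$ is too lossy since $(1-z)(\epsilon z+\bar\epsilon)^{d-1}$-type products enjoy substantial cancellation. The plan is to invoke the Cauchy-type estimate \prettyref{eq:bd2} on a slightly dilated disk $rD$ with $r>1$ chosen in terms of $r^*$, reducing to bounding $\|\Phi f\|_{H^\infty(rD)}$ by pointwise estimates on a shifted image of the unit circle under $w$. Simultaneously keeping $f(0)\ge\delta$, $\|f\|_A\le 1$, and the peaked behavior on the target arc, while achieving the sharp cubic-root exponent with the correct dependence on $\mu(\epsilon)$, is the technical heart of the argument.
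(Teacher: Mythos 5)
Your lower bound is sound and, modulo constants, coincides with the paper's except for one step: where the paper cites \cite[Corollary 3.2]{borwein1997littlewood} to get $\sup_{|\arg u|\le a/2}|f(u)|\ge(\delta/e)^{c_1/a}$ for feasible $f$, you derive the same subarc estimate directly from Jensen's inequality $\log|f(0)|\le\frac{1}{2\pi}\int\log|f(e^{i\phi})|\,d\phi$ applied with $|f|\le 1$ off the arc and $|f|\le M e^{Cd\mu r^2}$ on it. That is a legitimate, self-contained replacement (it is essentially how such Remez/Littlewood-type estimates are proved), and the quadratic bound $\rho(e^{i\phi})^d\ge e^{-Cd\mu\phi^2}$ and the optimization $r\asymp(\log(1/\delta)/(d\mu))^{1/3}$ match the paper exactly. (Both you and the paper gloss over the regime where the optimal arc length exceeds $2\pi$, i.e.\ $d\mu\lesssim\log(1/\delta)$; this is a shared, minor issue.)

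The upper bound, however, is a plan rather than a proof, and it contains a genuine gap plus one directional confusion. First, the confusion: since $h$ (equivalently $\rho^d$) is \emph{largest} at $u=1$ and decays away from it, the extremal $f$ must be \emph{small} on the arc near $u=1$ and allowed to grow to $O(1)$ elsewhere --- the opposite of a ``peak polynomial concentrated in an arc about $u=1$''; your own subsequent sentence (``$f(w(z))$ is small near $z=1$'') contradicts the stated construction. Second, and more importantly, you do not actually produce the feasible $f$: the paper takes the truncation of $g(\alpha z)$ with $g(z)=(1-z)^2\delta^{(1+z)/(1-z)}$, and the two nontrivial steps are (i) bounding $\|g\|_A\lesssim\log\frac1\delta$ --- which the paper does via the derivative estimate $\|g\|_A\le\|g\|_{H^\infty}+2\|g'\|_{H^\infty}$ (\prettyref{lmm:gnorm}), precisely \emph{because} the dilation estimate \prettyref{eq:bd2} that you propose to use is too lossy here ($g$ blows up doubly exponentially outside $D$ near $z=1/\alpha$, so no useful dilation radius exists); and (ii) showing that $\sup_{\partial D}|\delta^{(1+\alpha z)/(1-\alpha z)}h(z)|$ is attained at $z=1$ under the degree condition \prettyref{eq:d1} (\prettyref{lmm:third}), which is a genuine monotonicity computation balancing the two competing exponents and is not automatic. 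You explicitly flag this as ``the technical heart'' and leave it unresolved, so the upper bound \prettyref{eq:m-ub} is not established by the proposal. Note also that the $A$-norm of the \emph{objective} $\Phi\tilde f$ need not be controlled delicately at all: the crude bound $\|p\|_A\le d\,\|p\|_{H^\infty(D)}$ for degree-$d$ polynomials costs only a factor of $d$, which is absorbed into the constant $c'$ in the exponent; the delicate $A$-norm control is needed only for the \emph{constraint} $\|f\|_A\le1$. Your handling of the $\exp(-(1-2\epsilon)^2d)$ branch via $f\equiv\delta$ is correct and matches the paper.
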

Again, thanks to \prettyref{thm:dual}, \prettyref{eq:m-lb} and \prettyref{eq:m-ub} translate into the sample complexity lower and upper bound in \prettyref{thm:bsc}, respectively.

The rest of this section is devoted to proving \prettyref{prop:mintv-bsc} using $H^\infty$-relaxations (\prettyref{sec:bsc-horo}), with upper and lower bound shown in \prettyref{sec:bsc-lb} and \prettyref{sec:bsc-ub}, respectively.

\subsection{Two $H^\infty$-relaxations}
	\label{sec:bsc-horo}
	Compared to the analysis of lossy \pop, it turns out that the upper estimate \prettyref{eq:bd2} on $A$-norm using $H^\infty$-norm over a larger disk, which we relied on in \prettyref{sec:bec-horo} to deal with the composition operator \prettyref{eq:bec-op}, 
 is not useful for the more complicated operator \prettyref{eq:bsc-op} and more intricate analysis is thus required.
Specifically, we need to consider two $H^\infty$-relaxations of \prettyref{eq:mdef_2-bsc}, the latter of which retains the $A$-norm constraint:
\begin{align}
t_1(\delta, d) = & ~ \min \sth{\left\|f\pth{\frac{\bar\epsilon z + \epsilon}{\epsilon z + \bar\epsilon}} (\epsilon z + \bar\epsilon)^d \right\|_{H^\infty(D)}: f(0) \geq \delta,
 \|f\|_{H^\infty(D)} \le 1, \deg f \le d}\,  \label{eq:m1}\\
t_2(\delta, d)= & ~ \min \sth{\left\|f\pth{\frac{\bar\epsilon z + \epsilon}{\epsilon z + \bar\epsilon}} (\epsilon z + \bar\epsilon)^d \right\|_{H^\infty(D)}: f(0) \geq \delta,
 \|f\|_{A} \le 1, \deg f \le d}\,.	 \label{eq:m2}
\end{align}
Since 
$ \|p\|_{H^\infty(D)} \leq  \|p\|_{A}  \leq d \|p\|_{H^\infty(D)}$ for any degree-$d$ polynomials $p$, 
we have, in turn,
\begin{equation}
t_1(\delta, d) \le t(\delta,  d) \le  d t_2(\delta, d)\,.
\label{eq:mm1}
\end{equation}

Next we simplify \prettyref{eq:m1} and \prettyref{eq:m2} via a change of variable.
note that as long as $\epsilon \neq \frac{1}{2}$, 
the M\"obius transform $z \mapsto \frac{\bar\epsilon z + \epsilon}{\epsilon z + \bar\epsilon}$ maps the unit circle to itself.
Therefore
\begin{align*}
\left\|f\pth{\frac{\bar\epsilon z + \epsilon}{\epsilon z + \bar\epsilon}} (\epsilon z + \bar\epsilon)^d \right\|_{H^\infty(D)} 
\overset{\prettyref{eq:maxmod}}{=}  & ~  \sup_{z\in \partial D} 
\left|f\pth{\frac{\bar\epsilon z + \epsilon}{\epsilon z + \bar\epsilon}} (\epsilon z + \bar\epsilon)^d \right|
\\
= & ~ \sup_{w\in \partial D} 
\left|f(w)  h(w) \right| \overset{\prettyref{eq:maxmod}}{=} \|f h\|_{H^\infty(D)},
\end{align*}
where 
\[
h(w) \triangleq \left(1-2\epsilon\over \bar\epsilon - \epsilon w\right)^d
\]
is analytic on $\bar D$ since $\epsilon < 1/2$ by assumption. Hence \prettyref{eq:m1} and \prettyref{eq:m2} are equivalent to
\begin{align}
t_1(\delta, d) = & ~  \min \sth{\left\|f h\right\|_{H^\infty(D)}: f(0) = \delta,
 \|f\|_{H^\infty(D)} \le 1, \deg f \le d}, \label{eq:m11}\\
t_2(\delta, d) = & ~  \min \sth{\left\|f h\right\|_{H^\infty(D)}: f(0) = \delta,
 \|f\|_{A} \le 1, \deg f \le d}. \label{eq:m22}
\end{align}


\subsection{Lower bound on $t_1$} 
\label{sec:bsc-lb}

Rewriting $h(z)$ as $h(z) = \frac{1}{(1- c (z-1))^d}$, where $c\triangleq {\epsilon\over 1-2\epsilon}$, we have for all $\theta \in [-\pi,\pi]$,
\[
|h(e^{i\theta})| = (1 + 2c(1+c) (1-\cos \theta))^{-d/2} \geq (1 + c(1+c) \theta^2)^{-d/2} \geq e^{- \mu d \theta^2/2}
\]
where $\mu =\mu(\epsilon)$ is defined in \prettyref{eq:mu}.
Next, from~\cite[Corollary 3.2]{borwein1997littlewood} we conclude that for any feasible $f$ for \prettyref{eq:m11}, we have
$$ \sup_{\theta \in (-a/2,a/2)} |f(e^{i\theta})| \ge (\delta/e)^{c_1/a}\,, $$
where $c_1$ is an absolute constant. Thus, 
$$ \|fh\|_{H^\infty(D)} \geq  \sup_{\theta \in (-a/2,a/2)}  |f(e^{i\theta})| |h(e^{i\theta})|  \ge e^{- \mu d a^2/2 -
{c_1\over a} \log \frac{e}{\delta}}\,.$$
Setting $a = \left(2 c_1 \log \frac{e}{\delta} \over \mu d\right)^{1\over 3}$ we get the following non-asymptotic
bound:
$$ t_1(\delta,d) \ge \exp\sth{-c_2   \pth{\mu(\epsilon) d \log^2 \frac{e}{\delta}}^{1/3} }\,, $$
where $c_2$ is an absolute constant.

\subsection{Upper bound on $t_2$} 
\label{sec:bsc-ub}
In view of \prettyref{eq:mm1}, we will show that for all $\epsilon<1/2$, $\delta \in(0,1/3)$  and all $d \in \naturals$ we have 
\begin{equation}
t(\delta,d) \leq 
\max\sth{
\exp(-(1-2\epsilon)^2d), 
\pth{\frac{d^4 \mu}{\log^4 \frac{1}{\delta}} }^{1/3}
\exp\sth{- c' \pth{d\mu \log^2 \frac{1}{\delta}}^{1/3} }}.
\label{eq:mintv-bsc1}
\end{equation}
for some absolute constant $c'$.

First consider $d \leq \frac{1}{(1-2\epsilon)^2} \log \frac{1}{\delta}$. By the trivial bound \prettyref{eq:m-trivial}, we have
\begin{equation}
t(\delta,d) \leq \exp(-(1-2\epsilon)^2d).
\label{eq:case1}
\end{equation}
In the sequel we shall assume that 
\begin{equation}
d \geq \frac{1}{(1-2\epsilon)^2} \log \frac{1}{\delta}.
\label{eq:r}
\end{equation}
 To construct a near optimal solution for \prettyref{eq:m22}, we modify the feasible solution previously used in the
 proof of \prettyref{prop:mintv-bec}.
Let 
\begin{equation}\label{eq:g_bsc}
g(z) = (1-z)^2 \delta^{1+z\over 1-z}, 
\end{equation}
and $f(z) = g(\alpha z)$ with $\alpha \in (0,1)$ to be chosen.
Let $\tilde g$ be the degree-$d$ truncation of the Taylor expansion of $g(z) = \sum_n a_n z^n$, and $\tilde f(z) = \tilde g(\alpha z)$.
Note that $\tilde f(0)=\tilde g(0)=g(0)=\delta$. 
Next, instead of invoking \prettyref{eq:bd2} which estimates the $A$-norm by the $H^\infty$-norm over a bigger disk and turns out to be too loose here, 
the next lemma (proved in \prettyref{app:aux}) uses the $H^\infty$-norm of the derivative:
\begin{lemma}
\label{lmm:gnorm}	
For any $\delta > 0$, 
	$\|g\|_{A} \leq A(\delta) \triangleq 4(\log \frac{1}{\delta}+3)$.
\end{lemma}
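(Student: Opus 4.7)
The plan is to bound $\|g\|_A = \sum_{n\ge 0}|a_n|$ (where $a_n = [z^n]g(z)$) by (i) separating off the constant term $a_0$, (ii) controlling $\sum_{n\ge 1}|a_n|$ by Cauchy--Schwarz against a Parseval identity for $g'$, and (iii) proving a simple $H^\infty$-bound on $g'$ directly from its explicit form. Throughout, write $s \triangleq \log(1/\delta)\ge 0$ so that $g(z) = (1-z)^2 e^{-s(1+z)/(1-z)}$.

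First I would compute $g'$ in closed form. Using $\frac{d}{dz}\frac{1+z}{1-z} = \frac{2}{(1-z)^2}$ and the product rule, the two factors of $(1-z)$ cancel and one gets the clean expression
\[
g'(z) \;=\; -2\,(s+1-z)\,e^{-s(1+z)/(1-z)}.
\]

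The central step is the $H^\infty$-bound $\|g'\|_{H^\infty(D)} \le 2(s+2)$. For every $z\in D$ one has the classical identity
\[
\mathrm{Re}\,\frac{1+z}{1-z} \;=\; \frac{1-|z|^2}{|1-z|^2} \;>\; 0,
\]
so $|e^{-s(1+z)/(1-z)}|\le 1$ on the open unit disk, while trivially $|s+1-z|\le s+2$. Multiplying the two estimates gives the claim on all of $D$, with no need to invoke boundary behaviour or maximum-modulus subtleties.

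Finally I would put the bound together via Parseval plus Cauchy--Schwarz. Since $g'(z) = \sum_{n\ge 1} n a_n z^{n-1}$, Parseval gives $\sum_{n\ge 1} n^2 |a_n|^2 = \|g'\|_{H^2(D)}^2 \le \|g'\|_{H^\infty(D)}^2 \le 4(s+2)^2$. Cauchy--Schwarz then yields
\[
\sum_{n\ge 1} |a_n| \;\le\; \Bigl(\textstyle\sum_{n\ge 1}\frac{1}{n^2}\Bigr)^{1/2}\Bigl(\sum_{n\ge 1} n^2|a_n|^2\Bigr)^{1/2} \;\le\; \frac{\pi}{\sqrt{6}}\cdot 2(s+2) \;\le\; 3(s+2).
\]
Adding $|a_0| = |g(0)| = \delta \le 1$ produces $\|g\|_A \le 1 + 3(s+2) \le 4(s+3) = 4\bigl(\log\tfrac{1}{\delta}+3\bigr)$, as desired.

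The only non-routine step is the $H^\infty$-bound on $g'$: one might a priori worry that $g$, having a singular exponential factor at $z=1$, would produce a derivative that blows up somewhere on $D$. The computation makes it transparent that the quadratic zero $(1-z)^2$ in $g$ is exactly what is needed for $g'$ to remain a bounded analytic function on the disk with linear-in-$s$ norm; everything else is Parseval, Cauchy--Schwarz, and the estimate $\sum n^{-2} = \pi^2/6$.
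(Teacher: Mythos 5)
Your proof is correct and follows essentially the same route as the paper: the paper invokes the Newman-type inequality $\|g\|_{A} \leq \|g\|_{H^\infty(D)} + 2\|g'\|_{H^\infty(D)}$ (itself ``a simple consequence of Parseval's identity and the Cauchy--Schwarz inequality''), computes the same closed form for $g'$, and bounds $\|g'\|_{H^\infty(D)}\le 2(\log\frac{1}{\delta}+2)$ exactly as you do. The only difference is that you prove the Parseval/Cauchy--Schwarz step inline (using $|a_0|$ in place of $\|g\|_{H^\infty(D)}$ and the constant $\pi/\sqrt{6}$ in place of $2$), which gives a marginally sharper bound that still fits under $4(\log\frac{1}{\delta}+3)$.
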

Therefore 
\[
\|\tilde f\|_{A} =\sum_{n=0}^d \alpha^n |a_n| \leq \sum_{n \geq 0} |a_n| = \|g\|_{A} \leq A(\delta).
\]
By Cauchy's inequality \prettyref{eq:cauchy}, the Taylor series coefficients of $f$ satisfy $|a_n| \leq \|g\|_{H^\infty(D)} = 1$ and hence	
\begin{equation}\label{eq:dbl5x}
			\|\tilde f - f\|_{A} \le {\alpha^{d_0+1}\over 1-\alpha}\,.
\end{equation}

Next we bound the objective function.
Rewriting $h(z)$ as $h(z) = \frac{1}{(1- c (z-1))^d}$, where $c\triangleq {\epsilon\over 1-2\epsilon}$, we recall that 
$$ \|h\|_{H^\infty(D)} = 1\,.$$
Thus, we have 
\begin{align}
\|\tilde f h\|_{H^\infty(D)} 
\leq & ~ \|f h\|_{H^\infty(D)} + \|(f-\tilde f) h\|_{H^\infty(D)} \nonumber \\
\overset{\eqref{eq:dbl5x}}{\leq} & ~ \|g(\alpha z) h(z)\|_{H^\infty(D)} +  {\alpha^{d+1}\over
1-\alpha} \nonumber \\
 \leq & ~ 4 \|\delta^{1+\alpha z\over 1-\alpha z} h(z)\|_{H^\infty(D)} +{\alpha^{d+1}\over
1-\alpha}, \label{eq:fh}
\end{align}
where in the last step we used $\|(1-\alpha z)^2\|_{H^\infty(D)} \leq 4$.
The first term is bounded by the following lemma (proved in \prettyref{app:aux}):

\begin{lemma}
\label{lmm:third} 
If 
\begin{equation}
d \ge 2 \log \frac{1}{\delta} \max\sth{\frac{1}{\mu(1-\alpha)^3},  \frac{1}{1-\alpha}}
\label{eq:d1}
\end{equation}
where $\mu=\mu(\epsilon)$ is defined in \prettyref{eq:mu},
 then
\begin{equation}
\|\delta^{1+\alpha z\over 1-\alpha z} h(z)\|_{H^\infty(D)} = \delta^{1+\alpha \over 1-\alpha}\,.
\label{eq:third}
\end{equation}
 \end{lemma}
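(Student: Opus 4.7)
The plan is to apply the maximum modulus principle. Since $\alpha<1$ and $\epsilon<1/2$, both $g_0(z)\triangleq \delta^{(1+\alpha z)/(1-\alpha z)}$ and $h(z)$ are analytic on a neighborhood of $\bar D$, so $\|g_0 h\|_{H^\infty(D)} = \sup_{\theta \in [-\pi,\pi]} |g_0(e^{i\theta})h(e^{i\theta})|$. Evaluating at $z=1$ yields $g_0(1)h(1)=\delta^{(1+\alpha)/(1-\alpha)}$, which already gives the matching lower bound; only the upper bound is substantive.

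The first step is to compute each modulus on the unit circle. Writing $u\triangleq 1-\cos\theta\in[0,2]$, a direct computation shows
$$\mathrm{Re}\,\frac{1+\alpha e^{i\theta}}{1-\alpha e^{i\theta}} = \frac{1-\alpha^2}{(1-\alpha)^2 + 2\alpha u} \triangleq \rho(\theta), \qquad |h(e^{i\theta})|^2 = \bigl(1+2\mu u\bigr)^{-d},$$
where the second identity uses $c(1+c)=\mu$ with $c=\epsilon/(1-2\epsilon)$ after expanding $|1-c(e^{i\theta}-1)|^2$. Hence the desired inequality $|g_0 h|(e^{i\theta})\le \delta^{(1+\alpha)/(1-\alpha)}$ becomes, upon taking logarithms and using $\rho(0)=(1+\alpha)/(1-\alpha)$,
$$\frac{d}{2}\log(1+2\mu u) \;\ge\; \bigl(\rho(0)-\rho(\theta)\bigr)\log\tfrac{1}{\delta} \;=\; \frac{2\alpha(1+\alpha)\,u}{(1-\alpha)\bigl((1-\alpha)^2+2\alpha u\bigr)}\log\tfrac{1}{\delta}. \qquad (\star)$$

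The main obstacle is that as $\theta$ moves away from $0$, $|g_0|$ \emph{increases} (since $\rho$ decreases and $\delta<1$) while $|h|$ \emph{decreases}, so $(\star)$ encodes a genuine tradeoff: one must ensure that $d$ is large enough for the decay of $|h|$ to beat the growth of $|g_0|$ \emph{uniformly} in $u\in[0,2]$. To verify $(\star)$ under the hypothesis \prettyref{eq:d1}, I would split into two regimes. In the small-$u$ regime $2\mu u\le 1$, the elementary bound $\log(1+x)\ge x/2$ on $[0,1]$ gives LHS $\ge d\mu u/2$, while the right-hand side is at most $\tfrac{2\alpha(1+\alpha)u\log(1/\delta)}{(1-\alpha)^3}$ (drop the $2\alpha u$ term in the denominator); the first clause of \prettyref{eq:d1}, namely $d\gtrsim \log(1/\delta)/(\mu(1-\alpha)^3)$, then suffices. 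In the large-$u$ regime $2\mu u>1$, use $\log(1+2\mu u)\ge \log 2$ and bound the right-hand side of $(\star)$ by $O(\log(1/\delta)/(1-\alpha))$ after noting $(1-\alpha)^2+2\alpha u \gtrsim \alpha u$ so that $u/((1-\alpha)^2+2\alpha u)\lesssim 1/\alpha$; the second clause $d\gtrsim \log(1/\delta)/(1-\alpha)$ then takes over. Combining the two regimes yields $(\star)$ on all of $[-\pi,\pi]$, and \prettyref{eq:third} follows.
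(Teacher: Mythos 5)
Your proposal follows the paper's proof almost step for step: both arguments invoke the maximum modulus principle, compute
$\bigl|\delta^{(1+\alpha e^{i\theta})/(1-\alpha e^{i\theta})}\bigr|=\delta^{(1-\alpha^2)/((1-\alpha)^2+2\alpha u)}$ and $|h(e^{i\theta})|=(1+2\mu u)^{-d/2}$ with $u=1-\cos\theta\in[0,2]$, and reduce the lemma to showing that
\[
F(u)\;\triangleq\;\frac{1-\alpha^2}{(1-\alpha)^2+2\alpha u}\log\frac{1}{\delta}+\frac{d}{2}\log(1+2\mu u)
\]
attains its minimum over $[0,2]$ at $u=0$; your inequality $(\star)$ is exactly $F(u)\ge F(0)$. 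The only divergence is the finishing step: the paper substitutes $s=2\mu u$ and proves the stronger statement $F'\ge 0$ on all of $[0,2]$, extracting two sufficient conditions that map onto the two clauses of \prettyref{eq:d1}, whereas you verify $F(u)\ge F(0)$ directly by splitting at $2\mu u=1$. Your split is sound, but the crude estimates you use ($\log(1+x)\ge x/2$ on $[0,1]$, $\alpha(1+\alpha)\le 2$, and $\log(1+2\mu u)\ge\log 2$) cost a constant: the small-$u$ regime requires $d\ge 8\log(1/\delta)/(\mu(1-\alpha)^3)$ and the large-$u$ regime requires $d\ge (4/\log 2)\log(1/\delta)/(1-\alpha)$, neither of which follows from \prettyref{eq:d1} with its stated constant $2$. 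This is not purely cosmetic, because the lemma is applied with $\alpha$ as in \prettyref{eq:alpha-bsc}, for which the first clause of \prettyref{eq:d1} holds with \emph{equality}; to run your argument one must enlarge the constant in \prettyref{eq:d1} and re-tune $\alpha$ accordingly, which only perturbs the absolute constant $c'$ in \prettyref{eq:m-ub}. (For what it is worth, expanding $F$ at $u=0$ shows that $F'(0)\ge 0$ already forces $d\ge 2\alpha(1+\alpha)\log(1/\delta)/(\mu(1-\alpha)^3)$, so the constant $2$ in \prettyref{eq:d1} is itself marginally too small when $\alpha$ is near $1$ --- the paper's own verification drops the same factor of $2$ at this spot --- hence your looseness is of the same benign, constant-level kind as the paper's.)
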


Finally, set
\begin{equation}
\alpha = 1-\pth{ \frac{2}{d \mu} \log \frac{1}{\delta}  }^{1/3},
\label{eq:alpha-bsc}
\end{equation}
which, in view of the assumption \prettyref{eq:r}, fulfills the condition \prettyref{eq:d1} in the \prettyref{lmm:third}. Combining \prettyref{eq:fh} and \prettyref{eq:third} yields
$ \|\tilde f h\|_{H^\infty(D)} \le \delta^{1+\alpha\over 1-\alpha} + {\alpha^{d+1}\over
1-\alpha}.
$
Since $\alpha^d \leq \exp(-(1-\alpha) d)$, the assumption \prettyref{eq:r} together with \prettyref{eq:alpha-bsc} implies that $d \geq \log \frac{1}{\delta}$ and further $\alpha^d \leq \delta^{1 \over 1-\alpha}$. Hence
 \begin{equation}
\|\tilde f h\|_{H^\infty(D)} \leq \frac{1}{1-\alpha} \delta^{1 \over 1-\alpha} 
= \pth{\frac{d\mu}{2 \log \frac{1}{\delta}} }^{1/3}
\exp\sth{- \pth{\frac{d\mu}{2} \log^2 \frac{1}{\delta}}^{1/3} }.
 \label{eq:obj}
 \end{equation}

In summary, for any $\delta \in (0,1)$, we have constructed $\tilde f$ such that $\tilde f(0)=\delta$, $\|\tilde f\|_{A} \leq A(\delta) = 4(\log \frac{1}{\delta}+3)$ and 
$\|\tilde f h\|_{H^\infty(D)}$ is bounded by \prettyref{eq:obj}.
Rescaling by $A(\delta)$, we conclude there exists a universal constant $c_2$, such that for any $\delta'=\frac{\delta}{A(\delta)} \in (0,\frac{1}{3})$, 
\[
t_2(\delta',d) \leq c_2 \pth{\frac{d\mu}{\log^4 \frac{1}{\delta'}} }^{1/3}
\exp\sth{- c_2 \pth{d\mu \log^2 \frac{1}{\delta'}}^{1/3} },
\]
which, in view of \prettyref{eq:mm1}, yields \prettyref{eq:mintv-bsc1}.

\section{Smoothed estimators for lossy population recovery}
	\label{sec:smooth}

\newcommand{\gun}{g^{\textrm{u}}} \newcommand{\gsm}{g^{\textrm{s}}}

In this section, we construct an explicit estimator for lossy \pop that
is optimal up to a factor of $2$ in the exponent. We start from the 
unbiased estimator for $P_0$ and then modify it via the \emph{smoothing technique} proposed in
\cite{OSW16}.

Recall that for the linear estimator $g$ in \prettyref{eq:hatp}, its bias is bounded by $ \|\Phi^\top g-e_0\|_\infty$ and 
the standard deviation is at most $\frac{1}{\sqrt{n}} \|g\|_\infty$, 
where the matrix $\Phi$ is given by \prettyref{eq:Phi-bec} which is an upper triangular matrix with non-zero diagonals.
As mentioned in \prettyref{sec:intro}, the unique unbiased estimator is a linear estimator with coefficients $\gun = (\Phi^\top)^{-1} e_0$. Direct calculation shows that 
$\gun_j = (-\frac{\epsilon}{\bar\epsilon})^j$ for $j =0,1,\ldots,d$.\footnote{This can also be obtained from consider the inverse operator of \prettyref{eq:bec-op}, which is again a composition operator $(\Phi^{-1} f) (z) = f((z-\epsilon)/ \bar\epsilon )$.}
%

Note that $\|\gun\|_\infty = \max(1,
(\epsilon/\bar\epsilon)^{d+1})$. Hence, for $\epsilon \leq 1/2$, $\|\gun\|_\infty = 1$ and the unbiased estimator has a sample complexity at most
$\cO(\delta^{-2})$. For $\epsilon > 1/2$, the coefficients increase
exponentially in $d$ which results in high variance. To
alleviate this issue, we modify the estimator via the smoothing
technique proposed in \cite{OSW16}. 
The main idea of smoothing is to introduce an independent random integer $L$, 
truncate the unbiased estimator after the $L\Th$ term, and average the truncated estimators according to an appropriately chosen distribution that 
aims to balance the bias and variance.
Equivalently, this amounts to multiplying the coefficients of the unbiased estimator with a tail probability that modulates the exponential growth.
To this end, define the smoothed estimator $\hat P_0^{\rm u}$
as a linear estimator \prettyref{eq:hatp} with the coefficient $g$ given by
\[
\gsm_i \triangleq  \gun_i \cdot \Pr(L \geq i).
\]
The following theorem gives the sample complexity guarantee for Poisson smoothing.
\begin{theorem}
\label{thm:linear}
Let $\epsilon > 1/2$.
Let $L$ be Poisson distributed with mean $\lambda = \frac{1-\epsilon}{3\epsilon-1} \log n$. Then
\[
\sup_{P\in\calP_d} \Expect_P[(\hat P_0^{\rm u}-P_0)^2] \leq 4 n^{-\frac{1-\epsilon}{3\epsilon-1}}.
\]
Therefore, the sample complexity of the smoothed estimator is at most $O\Big( \delta^{-\frac{2(3\epsilon-1)}{1-\epsilon}}\Big)$.
\end{theorem}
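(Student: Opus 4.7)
The plan is to decompose the mean-squared error into squared bias and variance and choose $\lambda$ so that the two contributions balance at $n^{-(1-\epsilon)/(3\epsilon-1)}$.

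For the variance, $\hat P_0^{\rm u}$ is an empirical mean of $n$ i.i.d.\ bounded random variables, so it suffices to bound $\|\gsm\|_\infty$. Writing $r \triangleq \epsilon/\bar\epsilon > 1$, we have $|\gsm_i| = r^i \Pr(L \geq i)$. For $i \leq \lambda$ use $\Pr(L \geq i) \leq 1$ together with $\log r \leq r-1$ to obtain $r^i \leq r^\lambda \leq e^{(r-1)\lambda}$. For $i > \lambda$ apply the Chernoff tail $\Pr(\mathrm{Poi}(\lambda)\geq i)\leq e^{-\lambda}(e\lambda/i)^i$ and maximize $e^{-\lambda}(er\lambda/i)^i$ in $i$; the maximum is attained at $i=r\lambda$ and again equals $e^{(r-1)\lambda}$. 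Plugging in $\lambda = \bar\epsilon \log n/(3\epsilon-1)$ and $r-1=(2\epsilon-1)/\bar\epsilon$ gives $\|\gsm\|_\infty \leq n^{(2\epsilon-1)/(3\epsilon-1)}$, whence $\var(\hat P_0^{\rm u}) \leq \|\gsm\|_\infty^2/n \leq n^{-(1-\epsilon)/(3\epsilon-1)}$.

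For the bias, set $T_k \triangleq \EE[\gsm(\Binom(k,\bar\epsilon))] = \sum_{i=0}^k \binom{k}{i}(-\epsilon)^i \epsilon^{k-i}\Pr(L\geq i)$; the bias of $\hat P_0^{\rm u}$ equals $\sum_{x:\, w(x)\geq 1} P(x) T_{w(x)}$ (the $k=0$ term contributes $P_0\cdot T_0 = P_0$, which cancels $P_0$) and is therefore bounded by $\sup_{k\geq 1}|T_k|$. The naive triangle inequality yields only $|T_k|\leq (2\epsilon)^k$, which diverges for $\epsilon>1/2$, so the key is to exploit the alternating signs. Expanding the Poisson tail $\Pr(L\geq i)=\sum_{j\geq i}e^{-\lambda}\lambda^j/j!$ and interchanging the summation order, the inner sum $\sum_{i=0}^{\min(k,j)}\binom{k}{i}(-1)^i$ vanishes for $j\geq k$ (binomial theorem) and equals $(-1)^j\binom{k-1}{j}$ for $j<k$ (the standard partial alternating binomial identity). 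This collapses $T_k$ to
\[ T_k \;=\; \epsilon^k e^{-\lambda}\sum_{j=0}^{k-1}\binom{k-1}{j}\frac{(-\lambda)^j}{j!}, \]
whose sum is precisely the Laguerre polynomial of degree $k-1$ evaluated at $\lambda$. Szeg\H{o}'s classical bound $|L_n(x)|\leq e^{x/2}$ for $x\geq 0$ then gives $|T_k|\leq \epsilon^k e^{-\lambda/2}\leq e^{-\lambda/2}$ uniformly in $k\geq 1$, so $\text{bias}^2 \leq e^{-\lambda} = n^{-(1-\epsilon)/(3\epsilon-1)}$.

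Adding the two estimates yields MSE $\leq 2\, n^{-(1-\epsilon)/(3\epsilon-1)}$, in fact a factor of two sharper than the stated $4$; the sample complexity $O(\delta^{-2(3\epsilon-1)/(1-\epsilon)})$ follows by inverting. The main obstacle is the bias step: without the Laguerre identification the quantity $T_k$ appears to grow exponentially in $k$, and the whole point of the Poisson smoothing is that this identity converts the partial alternating sum into a classically bounded Laguerre polynomial, producing a $k$-uniform decay in $\lambda$. Everything else (the Chernoff computation for the variance and the final balancing of $\lambda$) is routine bookkeeping.
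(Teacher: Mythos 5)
Your proof is correct and follows essentially the same route as the paper: bound $\|\gsm\|_\infty$ by $e^{(r-1)\lambda}$ for the variance, and for the bias use summation by parts to collapse the alternating sum of Poisson tails into $\epsilon^k e^{-\lambda}L_{k-1}(\lambda)$ and invoke the classical bound $|L_{k}(\lambda)|\le e^{\lambda/2}$, then balance via the stated choice of $\lambda$. The only cosmetic differences are that the paper gets $\max_i r^i\Pr(L\ge i)\le \EE[r^L]$ in one line rather than via a Chernoff split, and derives the Laguerre identity by Pascal's recursion rather than interchanging the order of summation; your observation that the bias$^2$+variance decomposition gives $2n^{-\frac{1-\epsilon}{3\epsilon-1}}$ rather than $4n^{-\frac{1-\epsilon}{3\epsilon-1}}$ is also accurate.
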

\begin{proof}
We first bound the variance by the moment generating function of $L$. Observe that
\begin{align}
\|\gsm\|_\infty 
= \max_i |\gun_i | \cdot \Pr(L \geq i)  = \max_i (\epsilon/\bar\epsilon)^i \cdot \Pr(L \geq i ) 
 \leq  \EE_L [(\epsilon/\bar\epsilon)^L]  =e ^{\lambda\frac{2\epsilon-1}{1-\epsilon}},
\label{eq:sm3}
\end{align}
where the inequality follows from the assumption that $\epsilon/\bar{\epsilon} > 1$.
To bound the bias term, note that 
$(\Phi^\top \gsm)_0 = 1$ and hence
\begin{equation}
\| \Phi^\top \gsm - e_0 \|_\infty \leq \max_{1 \leq j \leq d} |(\Phi^\top \gsm)_j|.
\label{eq:sm1}
\end{equation}
For any $j > 0$,
\begin{align}
(\Phi^\top \gsm)_j
 = \sum^d_{i=0} \Phi_{ij} \gsm_i 
 = \sum^j_{i=0} {j \choose i} \epsilon^j (-1)^i \Pr(L \geq i) \triangleq \epsilon^j f(j).
\label{eq:sm2}
\end{align}
Observe that
\begin{align}
f(j) 
& = \sum^j_{i=0} {j \choose i}(-1)^i \Pr(L \geq i) \nonumber \\
& = \sum^j_{i=0} \left({j-1 \choose i-1} + {j-1 \choose i}\right)(-1)^i \Pr(L \geq i) \nonumber \\
& = \sum^j_{i=0} {j-1 \choose i-1}(-1)^i \Pr(L \geq i) + \sum^j_{i=0} {j-1 \choose i}(-1)^i \Pr(L \geq i) \nonumber \\
& = \sum^{j-1}_{i=0} {j-1 \choose i}(-1)^{i+1} \Pr(L \geq i+1) + \sum^{j-1}_{i=0} {j-1 \choose i}(-1)^i \Pr(L \geq i) \nonumber \\
& = \sum^{j-1}_{i=0} {j-1 \choose i}(-1)^{i} \Pr(L =i). \nonumber
\end{align}
If $L \sim \text{Poi}(\lambda)$, then
\[
f(j) = \sum^{j-1}_{i=0} {j-1 \choose i}(-1)^{i} \Pr(L =i) = \sum^{j-1}_{i=0} {j-1 \choose i}
(-1)^i e^{-\lambda} \frac{\lambda^{i}}{i!} = e^{-\lambda}
L_{j-1}(\lambda),
\]
where $L_{k}(\lambda) = 	\sum_{i=0}^k \frac{(-\lambda)^i}{i!}  \binom{k}{i}$
 is the Laguerre polynomial of degree $k$.
Since $|L_k(\lambda)| \leq e^{\lambda/2}$ for all $k\geq 0$ cf.~\cite[22.14.12]{AS64}, we have $|f(j)|\leq e^{-\lambda/2}$ for all $j\geq 1$.
Hence by \prettyref{eq:sm1} and \prettyref{eq:sm2},
\[
\|\Phi^\top \gsm - e_0 \|_\infty \leq e^{-\lambda/2}.
\]
Combining the above equation with
\prettyref{eq:sm3} yields
\[
\|\Phi^\top \gsm - e_0 \|_\infty + \frac{\|\gsm\|_\infty}{\sqrt{n}}
\leq  e^{-\lambda/2} + \frac{e^{\lambda\frac{2\epsilon-1}{1-\epsilon}}}{\sqrt{n}}.
\]
The theorem follows by setting $\lambda = \frac{1-\epsilon}{3\epsilon-1} \log n$.
\end{proof}

\appendix

\section{Converting individual recovery to population recovery}
\label{app:alg}


In this appendix we describe the algorithm in \cite{DRWY12} that relates individual recovery to population recovery and converts any estimator for $P_0$ to a distribution estimator. The main ingredient is induction on the length $d$
and the observation that for any $x$, one can convert any estimator $\hat P_0$ for
$P_0$ to one for $P_x$:
\[
\wt P_x = \hat P_0(Y_1\oplus x, \ldots, Y_n\oplus x),
\]
which inherits the performance guarantee of $\hat P_0$ under both lossy and noisy model.

Let $xa$ denote the sequence obtained by concatenating sequences $x$
and $a$. Consider the following algorithm:
\begin{itemize}
\item Input $\delta$.
\item Initialize: $\cS_1 = \{0,1\}$.
\item For each $i$ from $2$ to $d$: 
\begin{itemize}
\item $\cS_i \leftarrow \cup_{x \in \cS_{i-1}} \{x0, x1\}$.
\item For each $x$ in $\cS_i$, compute the estimate $\wt P_x = \hat P_0(Y_1\oplus x, \ldots, Y_n\oplus x)$.  
	\item Let  $\cS_i \leftarrow \cS_i \setminus \{x: \wt P_x \leq 2\delta\}$.
\end{itemize}
\item 
Assign $\hat P_x = \wt P_x$ for all $x \in \calS_d$ and $\hat P_x = 0$ otherwise.
\end{itemize}

The following lemma, applicable to both lossy and noisy recovery, improves the original guarantee in \cite{DRWY12} which depends on the support size of the distribution.

 \begin{lemma}
   \label{lem:irptorp}
If there is an algorithm $\hat P_0$ that estimates $P_0$ to an
accuracy of $\delta$ using $n$ samples with probability at least $1-
\alpha$, then there is an algorithm that with probability $\geq 1 -
9d\alpha/(4\delta)$ satisfies,
\[
\max_{x \in \{0,1\}^d} |P_x - \hat{P}_x| \leq 4\delta.
\] 
The run time for the algorithm is $\cO(d/\delta \cdot (t+nd))$, where $t$
is the time it takes to compute $\hat P_0$.
\end{lemma}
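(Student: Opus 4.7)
The plan is to analyze the algorithm conditional on the ``good event'' $\calG$ that every invocation of $\hat P_0$ during the algorithm returns an estimate within $\delta$ of its target. Throughout, let $P_x^{(i)} \triangleq \sum_{z\in\{0,1\}^{d-i}} P_{xz}$ denote the marginal probability that the first $i$ bits of $X\sim P$ equal $x$; then, by the XOR reduction described just before the algorithm, on the event that a particular call succeeds, $|\wt P_x - P_x^{(i)}|\le \delta$.

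Under $\calG$, I would verify the error guarantee by splitting on membership in $\cS_d$. If $x\in\cS_d$ then $\hat P_x=\wt P_x$ is within $\delta\le 4\delta$ of $P_x=P_x^{(d)}$. If $x\notin \cS_d$, there is a smallest level $i\in\{2,\ldots,d\}$ at which its prefix $x_{1:i}$ was pruned, i.e.\ $\wt P_{x_{1:i}}\le 2\delta$, and accuracy gives $P_{x_{1:i}}^{(i)}\le 3\delta$; since $P_x\le P_{x_{1:i}}^{(i)}$, we conclude $|\hat P_x - P_x|=P_x\le 3\delta\le 4\delta$.

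The main quantitative step is bounding $|\cS_i|$, which controls both the number of estimator calls and therefore the failure probability and runtime. Under $\calG$, every element of $\cS_i$ after pruning satisfies $P_x^{(i)}\ge \wt P_x-\delta >\delta$, and since these marginals sum to at most $1$, we get $|\cS_i^{\text{after}}|<1/\delta$. Therefore $|\cS_i^{\text{before}}|\le 2|\cS_{i-1}^{\text{after}}|<2/\delta$ for $i\ge 3$, and $|\cS_2^{\text{before}}|=4$. Summing over $i=2,\ldots,d$ yields a bound of at most $9d/(4\delta)$ total estimator calls (straightforward arithmetic for $\delta<1$).

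A union bound over these calls gives $\Pr[\calG^c]\le 9d\alpha/(4\delta)$, which is exactly the failure probability claimed. For runtime, each call consists of XOR-ing $x$ into each of the $n$ samples (cost $O(nd)$) followed by one invocation of $\hat P_0$ (cost $t$); multiplying by the $O(d/\delta)$ call count gives the stated runtime $O((d/\delta)(t+nd))$. The only mildly delicate point is making the counting of $|\cS_i|$ rigorous, because a priori it depends on $\calG$ holding for the calls made at the current level, but since we are already conditioning on $\calG$ this presents no circularity; the induction simply proceeds level by level inside the good event.
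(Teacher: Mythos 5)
Your proof is correct and follows essentially the same route as the paper's: both rest on the observations that, when all estimator calls are accurate, every surviving prefix has true marginal exceeding $\delta$ (hence $|\cS_i| < 1/\delta$ and only $O(d/\delta)$ calls are ever made), that pruned strings have small probability, and a union bound over those calls. The only difference is bookkeeping: you bundle everything into a single good event over all calls, whereas the paper splits the failure probability into three pieces (a string with $P_x \ge 4\delta$ being discarded, $|\cS_d|$ exceeding $1/\delta$, and the final estimates being off), arriving at the same $9d\alpha/(4\delta)$.
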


\begin{remark}
\label{rmk:median}	
\prettyref{lem:irptorp} shows that the sample complexity of population recovery (estimating all probabilities) is within a logarithmic factor of that of individual recovery (estimating $\hat P_0$), namely, $n^*(d,\delta)$. To see this, consider an estimator $\hat P_0$ that achieves $\Expect(\hat P_0-P_0)^2 \leq \delta^2$ so that
$|\hat P_0-P_0| \leq 2 \delta$ with probability at least $1/4$. By the usual split-sample-then-median method,\footnote{That is, 
divide all the samples into $\log \frac{1}{\tau}$ batches, apply the same estimator to each batch and take the batchwise median.} $O(n^*(\delta,d) \cdot \log \frac{d}{\delta \tau})$ samples suffices to boost the probability of estimating $P_0$ within $\delta$ to $1- \frac{\delta\tau}{d}$, which, in view of \prettyref{lem:irptorp}, suffices to estimate all probabilities within $\delta$ with probability $1-\tau$. 		
\end{remark}


\begin{proof}
We first show that with high probability all sequences with
probability $\geq 4 \delta$ remain in $\cS_d$.  Observe that if a
sequence $x$ has probability $\geq 4\delta$, then all of its 
prefixes also has probability $\geq 4\delta$. By definition, $\Pr(|P_x - \hat{P}_x|\geq 2\delta) \leq \alpha$.
Hence, the probability that $x$ is discarded is at most
$\alpha (d-1)$ by the union bound and the probability that any such
sequence is absent in $\calS_d$ is at most $(d-1)\alpha/(4\delta)$.

We now show by induction that $|\cS_d| \leq 1/\delta$ with probability $\geq 1 -
2(d-1)\alpha/\delta$.  Suppose $|\cS_{i-1}| \leq
1/\delta$ with probability $\geq 1- 2(i-2)\alpha/\delta$, which holds
for $i-1 = 1$. Then,
\begin{align*}
\Pr(\exists y \in \cS_i : {P}_y \leq \delta) 
&\leq \sum_{y \in \cup_{x \in \cS_{i-1}} \{x0, x1\}} \Pr(\hat{P}_y > 2\delta, P_y \leq \delta) \\
& \stackrel{(a)}{\leq}\sum_{y \in \cup_{x \in \cS_{i-1}} \{x0, x1\}} \alpha \\
&  = 2 \alpha |\cS_{i-1}| 
\end{align*}
where $(a)$
follows from the assumption that $\Pr(|\hat P_y - P_y| \leq \delta) \geq 1- \alpha$ for all $y$.
 Hence, by the inductive hypothesis and the union bound, the probability that $|\cS_i|$ 
exceeds $ 1/\delta$ is at most $2(i-1)\alpha/\delta$.

Combining the above two results, we get that with probability $\leq 9(d-1)\alpha/(4\delta)$,
$\cS_d$ contains all symbols with probability $\geq 4\delta$ and
$|\cS_d| \leq 1/\delta$. 
Conditioned on this event, with probability at least $1-\alpha/\delta$, $|P_x
-\hat{P}_x|\leq \delta$, for all $x \in \cS_d$.  Hence,
\begin{align*}
\max_{x \in \{0,1\}^d} |P_x -\hat{P}_x| & = \max \left(\max_{x \in
  S_d} |P_x -\hat{P}_x| , \max_{x \notin S_d} P_x \right)
	\leq \max \left(\max_{x \in S_d} |P_x -\hat{P}_x|, 4\delta
\right) \leq 4\delta.
\end{align*}
By the union bound the total error probability is $9(d-1)\alpha/(4\delta) + \alpha /\delta$. 
\end{proof}

\section{Proofs of auxiliary results}
\label{app:aux}

\begin{proof}[Proof of \prettyref{lmm:aux-dual}]
\begin{enumerate}
	\item Notice that maximizer $\Delta$ for $\delta(t)$ yields a feasible solution $\lambda \Delta$ in the program
for $\delta(\lambda t)$.

\item The first inequality of~\eqref{eq:deltas} is obvious. Now consider $\Delta$ is the maximizer for $\delta(t)$. Let
$\iprod{\Delta}{\ones} = \epsilon$. 
Since $\Phi$ is column-stochastic, we have $\Phi^\top \ones = \ones$ and thus
$$\iprod{\Delta}{\ones} = \iprod{\Phi \Delta}{\ones} \le\|\Phi\Delta\|_1 \|\ones\|_\infty \le t $$ 
and thus $|\epsilon|\le t$.
Then, let $\pi_\pm$ be distributions on $\Theta$ with
$\iprod{\pi_-}{h}\le 0 \le \iprod{\pi_+}{h}$ (their existence follows from that of $\theta_{\pm}$). Let
$\pi_0=\pi_-$ if $\epsilon>0$ and $\pi_0=\pi_+$ otherwise. Define
$$ \tilde\Delta \eqdef {1\over 2} \Delta - {\epsilon\over 2} \pi_0\,.$$
From $\|\pi_-\|_1 = 1$, $\|\Phi r\|_1 \le \|r\|_1$ and triangle inequality we have:
\begin{align*} \|\tilde \Delta\| &\le {1\over 2} + {|\epsilon|\over 2} \le 1\\
   \|\Phi\tilde \Delta\| &\le {1\over 2} \|\Phi \Delta\|_1 + {|\epsilon|\over 2} \le t\\
   \Iprod{\tilde \Delta}{\ones} &= {1\over 2} \iprod{\Delta}{\ones} - {\epsilon\over 2}=0\\
   \Iprod{\tilde \Delta}{h} &= {1\over 2} \iprod{\Delta}{h} - {\epsilon\over 2} \iprod{\pi_0}{h} \ge {1\over 2}
   \iprod{\Delta}{h}\,.
\end{align*}
Thus, we get that $\tilde \delta(t) \ge {1\over 2} \delta(t)$. 

\item
Let $z$ be such that $\iprod{z}{\ones}=0$ and $\|z\|_1\leq 1$ and set $\Delta=t z$. Then 
	$\|\Phi\Delta\|_1 \leq \|\Phi\|_{\ell_1\to\ell_1} \|\Delta\|_1 \leq t$.
	Choose $z$ to maximize $\iprod{z}{h}$ gives the desired result with $C(h) = {h_{\max}-h_{\min}\over 2}$, where 
$h_{\max} > h_{\min}$ are the maximal and minimal values of $h(\theta)$.
\end{enumerate}
\end{proof}

\begin{proof}[Proof of \prettyref{lmm:gnorm}]
	Recall the following upper bound on the $A$-norm in terms of the $H^\infty$-norm of the derivative, which is a simple consequence of Parseval's identity and the Cauchy-Schwartz inequality (see, e.g., \cite{Newman75}):
\begin{equation}
\|g\|_{A} \leq \|g\|_{H^\infty(D)} + 2 \|g'\|_{H^\infty(D)}.
\label{eq:newman}
\end{equation}
Recall that $\|\delta^{1+z\over 1-z}\|_{H^\infty(D)} = 1$. We have $\|g\|_{H^\infty(D)} \leq 4$.
Furthermore, $g'(z) = 2 \delta^{1+z\over 1-z} (\log \delta + z - 1)$. We have $\|g'\|_{H^\infty(D)} \leq 2 (\log \frac{1}{\delta}+2)$ and hence the lemma.
\end{proof}

\begin{remark}[Alternative proof of \prettyref{lmm:gnorm}]
\label{rmk:lag}	
	In fact with more refined analysis it is possible to choose
	\[
g(z) = (1-z) \delta^{1+z\over 1-z}
\]
as opposed to \prettyref{eq:g_bsc}.
Of course, in this case the bound \prettyref{eq:newman} based on derivatives fails.
Instead, we can express the Taylor expansion coefficients of $g$ in terms of Laguerre polynomials. 
	Recall the generalized Laguerre polynomial $L_n^{(\nu)}(x)$ 
and its generating function \cite[8.975]{GR}:
\[
\sum_{n \geq 0} z^n L_n^{(\nu)}(x) =  \frac{1}{(1-z)^{1+\nu}}  \exp\pth{\frac{zx}{z-1}} , \quad |z|< 1.
\]
Since $g(z) = (1-z) \delta \delta^{\frac{2}{1-z}}$, its Taylor expansion $g(z) = \sum_{n \geq 0} a_n z^n$ is given by
$a_n = \delta L_n^{(-2)}(x)$ with $x = 2 \log \frac{1}{\delta}$ 
and hence $\|g\|_{A} = \sum_{n\geq 0} |L_n^{(-2)}(x)|$.
Recall the classical bound of Szeg\"o (cf.~\cite[(7.6.10)]{orthogonal.poly}) on Laguerre polynomials: for $\nu \leq -\frac{1}{2}$, as $n\to\infty$,
$|L_n^{(\nu)}(x)| = O(n^{\nu/2-1/4})$
uniformly in any compact interval of $[0,\infty)$.
Therefore $a_n = O(n^{-5/4})$ and hence $\|g\|_{A}$ is finite.
Furthermore, recall Fej\'er's sharp asymptotics of Laguerre polynomials \cite[Theorem 8.22.1]{orthogonal.poly}: for fixed $\nu \in \reals$ as $n\to\infty$, $L_n^{(\nu)}(x) = \sqrt{\pi} e^{x/2} x^{-\nu/2-1/4} n^{\nu/2-1/4} \cos(2\sqrt{nx} - \pi(1/4+\nu/2)) + O(n^{\nu/2-3/4})$, where $x>0$ and the remainder is uniform in any compact interval of $(0,\infty)$. This suggests that 
$\|g\|_{A} = O((\log \frac{1}{\delta})^{5/4})$.

%
%

\end{remark}

\begin{proof}[Proof of \prettyref{lmm:third}]
Note that $ \|\delta^{1+\alpha z\over 1-\alpha z} h(z)\|_{H^\infty(D)} = \max_{|z|=1} \big|\delta^{1+\alpha z\over 1-\alpha z} \frac{1}{(1- c (z-1))^d}\big|$, where
\begin{align*}
\left| \frac{\delta^{1+\alpha e^{i\theta}\over 1-\alpha e^{i\theta}}}{(1- c (e^{i\theta}-1))^d}\right|
= & ~ \delta^{\text{Re}(\frac{1+\alpha e^{i\theta}}{1-\alpha e^{i\theta}})} |1- c (e^{i\theta}-1)|^{-d} \\
= & ~ 	\delta^{\frac{1-\alpha^2}{1-2\alpha \cos \theta + \alpha^2}} (1 + 2c(1+c) (1-\cos \theta))^{-d/2}
= e^{-F(1-\cos \theta)}
\end{align*}
where $F(t) \triangleq \frac{1-\alpha^2}{(1-\alpha)^2+2\alpha t} \log \frac{1}{\delta}  +\frac{d}{2}\log(1 + 2 t \mu)$ and $\mu=\mu(\epsilon)$ as defined in \prettyref{eq:mu} since $c=\frac{\epsilon}{1-2\epsilon}$ and 
$c(1+c)=\mu$.
The goal is to identify conditions so that the $H^\infty$-norm is achieved at  $\theta= 0$ ($z=1$).
Let $s = 2t \mu, A = \frac{d}{2(1-\alpha^2) \log \frac{1}{\delta}}, B =(1-\alpha)^2,C=\alpha/\mu$. Then $F(t) = (1-\alpha^2) \log \frac{1}{\delta}(A \log(1+s) + \frac{1}{B+Cs}) \triangleq \delta(s)$.
Straightforward calculation shows that 
$F'(t) \geq 0$ for $t\in[0,2]$ provided that
\[
A \geq \frac{C}{B^2},   \quad A \geq \frac{1}{2B},
\]
that is,
$d \geq \log \frac{1}{\delta} \max\{ \frac{\alpha(1-\alpha^2)}{\mu (1-\alpha)^4}, \frac{1-\alpha^2}{(1-\alpha)^2}\}$, which is ensured by \prettyref{eq:d1}.
Hence $\|\delta^{1+\alpha z\over 1-\alpha z} h(z)\|_{H^\infty(D)}=e^{-F(0)}$ as claimed.
\end{proof}

Finally, the next result is used in \prettyref{sec:lp-bec}:
\begin{lemma}
\label{lmm:bin}	
	For any $d'\geq d \geq 1$ and $0\leq p<1$,
	$H^2(\Binom(d',p), \Binom(d'-d,p)) \leq \frac{4pd^2}{(1-p) d'}$. 
\end{lemma}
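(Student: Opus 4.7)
My plan is to invoke the standard information-theoretic inequality $H^2(P,Q)\le \mathrm{KL}(Q\|P)$ and then bound the KL divergence explicitly. The direction is forced: taking $P=\Binom(d',p)$ and $Q=\Binom(d'-d,p)$, the support of $Q$ lies inside that of $P$, so $\mathrm{KL}(Q\|P)$ is finite, whereas $\mathrm{KL}(P\|Q)=+\infty$. The bound $H^2\le \mathrm{KL}$ itself follows in one line from $-\log x\ge 1-x$ applied to $x=\sqrt{dQ/dP}$.

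With $q=1-p$ and the factorisation
\[
\frac{\binom{d'-d}{k}}{\binom{d'}{k}}\;=\;\prod_{i=1}^{d}\frac{d'-d+i-k}{d'-d+i}\qquad (k\le d'-d),
\]
the log-likelihood ratio becomes $\log(Q_k/P_k)=\sum_{i=1}^{d}\log(1-k/(d'-d+i))+d\log(1/q)$. I would then apply Jensen's inequality (concavity of $\log$) inside each of the $d$ expectations, replacing $K\sim Q$ by its mean $(d'-d)p$. After rearranging with $p+q=1$, the constant $d\log(1/q)$ cancels the leading order of the Jensen bound and the two parts combine into
\[
\mathrm{KL}(Q\|P)\;\le\;\sum_{i=1}^{d}\log\!\left(1+\frac{ip}{q(d'-d+i)}\right)\;\le\;\frac{p}{q}\sum_{i=1}^{d}\frac{i}{d'-d+i},
\]
where I used $\log(1+x)\le x$ in the last step.

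To finish I would bound the sum $S:=\sum_{i=1}^d i/(d'-d+i)$ by $2d^2/d'$ via a two-case dichotomy: if $d\le d'/2$, every denominator is $\ge d'/2$ and hence $S\le (2/d')\sum_{i=1}^{d}i\le 2d^2/d'$; if $d>d'/2$, every summand is $\le 1$ so $S\le d\le 2d^2/d'$ because $2d\ge d'$. Combining the two yields $\mathrm{KL}(Q\|P)\le 2pd^2/(qd')$, and therefore $H^2\le 4pd^2/((1-p)d')$ as claimed.

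The main subtlety is engineering the Jensen step so that the leading $O(dp/q)$ contributions in the two summands cancel, leaving only the second-order $O(pd^2/(qd'))$ term; a naive application of Jensen would give either an $O(dp/q)$ or an $O(d)$ estimate, both of which are too weak to yield the desired $d^2/d'$ dependence. Once this cancellation is arranged, the remaining estimates are routine calculus.
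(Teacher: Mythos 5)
Your proof is correct, and it actually yields the slightly stronger constant $2$ in place of $4$. The route differs from the paper's: the paper proves the single-step bound $H^2\big(\Binom(n,p),\Binom(n-1,p)\big)\le \frac{p}{(1-p)n}$ via the chi-square bound $H^2\le\chi^2$ (a one-line computation on $\chi^2(\Binom(n-1,p)\|\Binom(n,p))$), and then telescopes over $d$ steps using the triangle inequality for the (unsquared) Hellinger distance and an integral comparison for $\sum_{n>d'-d}^{d'} n^{-1/2}$. You instead bound $H^2\le\mathrm{KL}(Q\|P)$ in the only finite direction and compute the $d$-step KL divergence in one shot, using the product factorization of the binomial-coefficient ratio, Jensen's inequality to linearize, and an engineered cancellation so that the $O(d\,p/q)$ leading terms drop out, leaving the $O(pd^2/(qd'))$ second-order contribution. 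The paper's argument is more modular (a clean one-step bound composed with a generic triangle inequality) and a bit shorter; yours is self-contained and avoids passing through the single-step device, at the cost of the more delicate algebra that arranges the cancellation. Both are valid; the essential phenomena — the $d^2/d'$ scaling and the $p/(1-p)$ prefactor — emerge either way.

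One cosmetic note: your chain already gives $H^2\le\frac{2pd^2}{(1-p)d'}$, so there is no factor of $2$ to lose when concluding; the stated bound with $4$ follows a fortiori.
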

\begin{proof}[Proof of \prettyref{lmm:bin}]
	It suffices to show that for any $n\geq 1$, $H^2(\Binom(n,p),\Binom(n-1,p)) \leq \frac{p}{(1-p) n}$.	
	Indeed, since $H^2(P,Q) \leq \chi^2(P\|Q) = \int \frac{(dP)^2}{dQ} -1$, we have
	\[
	\chi^2(\Binom(n-1,p)\|\Binom(n,p)) = \Expect_{X \sim \Binom(n,p)}\Big[\Big(\frac{n-X}{n(1-p)}\Big)^2\indc{X < n} \Big] \leq \frac{p}{n(1-p)}.
		\]
	Then by the triangle inequality of the Hellinger distance, we have
	$H(\Binom(d',p), \Binom(d'-d,p)) \leq \sqrt{\frac{p}{1-p}} \sum_{n=d'-d+1}^{d'} \frac{1}{\sqrt{n}} \leq \sqrt{\frac{p}{1-p}} \int_{d'-d}^{d'} \frac{1}{\sqrt{x}}dx \leq \sqrt{\frac{p}{1-p}} \frac{2d}{\sqrt{d'}} $. 
\end{proof}

\section*{Acknowledgement}
The research was supported (in part) by the Center for Science of Information (CSoI),
an NSF Science and Technology Center, under grant agreement CCF-09-39370, 
ITS-1447879, CCF-1527105, NSF CAREER awards CCF-12-53205 and CCF-1651588.


\end{document}